\newcommand{\R}{{\mathbb R}}       
\newcommand{\DD}{{\mathcal D}}
\newcommand{\HH}{{\mathcal H}}
\newcommand{\RR}{{\mathcal R}}
\newcommand{\EE}{{\mathcal E}}
\newcommand{\om}{{\Omega}}
\newcommand{\hm}{{\omega}}
\newcommand{\diam}{\mathop{\rm diam}}
\newcommand{\dist}{{\rm dist}}
\newcommand{\rf}[1]{{(\ref{#1})}}
\newcommand{\supp}{\operatorname{supp}}
\newcommand{\vphi}{{\varphi}}
\newcommand{\ve}{{\varepsilon}}
\newcommand{\vv}{{\vspace{2mm}}}
\newcommand{\vvv}{\vspace{4mm}}
\newcommand{\wt}[1]{{\widetilde{#1}}}
\newcommand{\noi}{\noindent}
\newcommand{\LD}{{\mathsf{LD}}}
\newcommand{\LLD}{{\mathcal{LD}}}
\def\Xint#1{\mathchoice
{\XXint\displaystyle\textstyle{#1}}%
{\XXint\textstyle\scriptstyle{#1}}%
{\XXint\scriptstyle\scriptscriptstyle{#1}}%
{\XXint\scriptscriptstyle\scriptscriptstyle{#1}}%
\!\int}
\def\XXint#1#2#3{{\setbox0=\hbox{$#1{#2#3}{\int}$ }
\vcenter{\hbox{$#2#3$ }}\kern-.58\wd0}}
\def\avint{\Xint-}
\newtheorem{theorem}{Theorem}[section]
\newtheorem{lemma}[theorem]{Lemma}
\newtheorem{proposition}[theorem]{Proposition}
\newtheorem*{lemma*}{Lemma}
\newtheorem*{theorem*}{Theorem}
\theoremstyle{definition}
\newtheorem{definition}[theorem]{Definition}
\theoremstyle{remark}
\newtheorem{rem}[theorem]{\bf Remark}
\numberwithin{equation}{section}
\newcommand{\cnj}[1]{\overline{#1}}
\newcommand{\RRem}{\begin{rem}}
\newcommand{\erem}{\end{rem}}
\def\d{\partial}
\def\@tocline#1#2#3#4#5#6#7{\relax
  \ifnum #1>\c@tocdepth 
  \else
    \par \addpenalty\@secpenalty\addvspace{#2}%
    \begingroup \hyphenpenalty\@M
    \@ifempty{#4}{%
      \@tempdima\csname r@tocindent\number#1\endcsname\relax
    }{%
      \@tempdima#4\relax
    }%
    \parindent\z@ \leftskip#3\relax \advance\leftskip\@tempdima\relax
    \rightskip\@pnumwidth plus4em \parfillskip-\@pnumwidth
    #5\leavevmode\hskip-\@tempdima
      \ifcase #1
       \or\or \hskip 1em \or \hskip 2em \else \hskip 3em \fi%
      #6\nobreak\relax
    \dotfill\hbox to\@pnumwidth{\@tocpagenum{#7}}\par
    \nobreak
    \endgroup
  \fi}
\def\cD{{\mathscr{D}}}
\def\cH{{\mathcal{H}}}
\def\cL{{\mathscr{L}}}
\def\bB{{\mathbb{B}}}
\def\bR{{\mathbb{R}}}
\def\bZ{{\mathbb{Z}}}
\newcommand{\ps}[1]{\left( #1 \right)}
\def\gec{\gtrsim}
\def\lec{\lesssim}
\def\grad{\nabla}
\def\bR{\mathbb{R}}
\def\lec{\lesssim}
\def\warrow{\rightharpoonup}
\begin{document}

\title[A two-phase free boundary problem for harmonic measure]{A  two-phase free boundary problem for harmonic measure and uniform rectifiability}


\author[Azzam]{Jonas Azzam}

\address{Jonas Azzam\\
School of Mathematics \\ University of Edinburgh \\ JCMB, Kings Buildings \\
Mayfield Road, Edinburgh,
EH9 3JZ, Scotland.}
\email{j.azzam "at" ed.ac.uk}

\newcommand{\jonas}[1]{\marginpar{\color{magenta} \scriptsize \textbf{Jonas:} #1}}

\author[Mourgoglou]{Mihalis Mourgoglou}

\address{Mihalis Mourgoglou\\
Departamento de Matem\'aticas, Universidad del Pa\' is Vasco, Barrio Sarriena s/n 48940 Leioa, Spain and\\
Ikerbasque, Basque Foundation for Science, Bilbao, Spain.
}
\email{michail.mourgoglou@ehu.eus}

\author[Tolsa]{Xavier Tolsa}
\address{Xavier Tolsa
\\
ICREA, Passeig Lluís Companys 23 08010 Barcelona, Catalonia, and\\
Departament de Matem\`atiques and BGSMath
\\
Universitat Aut\`onoma de Barcelona
\\
Edifici C Facultat de Ci\`encies
\\
08193 Bellaterra (Barcelona), Catalonia
}
\email{xtolsa@mat.uab.cat}

\subjclass[2010]{31B15, 28A75, 28A78, 35J15, 35J08, 42B37}
\thanks{M.M. was supported  by IKERBASQUE and partially supported by the grant MTM-2017-82160-C2-2-P of the Ministerio de Econom\'ia y Competitividad (Spain), and by  IT-1247-19 (Basque Government). X.T. was supported by the ERC grant 320501 of the European Research Council (FP7/2007-2013) and partially supported by MTM-2016-77635-P,  MDM-2014-044 (MICINN, Spain), 2017-SGR-395 (Catalonia), and by Marie Curie ITN MAnET (FP7-607647).
}

\begin{abstract}
We assume that $\om_1, \om_2 \subset \R^{n+1}$, $n \geq 1$ are two disjoint domains whose complements satisfy { the capacity density} condition and the intersection of their boundaries $F$ has positive harmonic measure. Then we show that in a fixed ball $B$ centered on $F$, if the harmonic measure of $\om_1$ satisfies a scale invariant $A_\infty$-type condition with respect to the harmonic measure of $\om_2$ in $B$, then there exists a uniformly $n$-rectifiable set $\Sigma$  so that the harmonic measure of $\Sigma  \cap F$ contained in $B$ is bounded below by a fixed constant independent of $B$. A remarkable feature of this result is that the harmonic measures do not need to satisfy any doubling condition. In the particular case that $\Omega_1$ and $\Omega_2$ are complementary NTA domains, we obtain a geometric characterization of
the $A_\infty$ condition between the respective harmonic harmonic measures of $\Omega_1$ and $\Omega_2$.
\end{abstract}

\maketitle   



 \tableofcontents




\section{Introduction}

In the current manuscript we study a quantitative version of the two-phase problem for harmonic measure proved by the authors in \cite{AMT16}, and the authors and Volberg in \cite{AMTV16}. We assume that we have two disjoint domains in $\R^{n+1}$ (i.e., open and connected sets) whose boundaries satisfy the capacity density condition (CDC) and have non-trivial intersection. If $B$ is a ball centered at a point of their common boundary and the respective harmonic measures with poles in $\frac{1}{2}B$ satisfy an $A_\infty$-type condition only at the top level $B$ (see \eqref{eq:Amu1} for the precise definition), then we show that there exists a subset of the common boundary in $B$  that captures a ``big piece" of $B$ in harmonic measure and is contained in a uniformly rectifiable set.

To state our results in more detail, we need now to introduce some further definitions and notation. Given an open set $\om \subsetneq \R^{n+1}$, $n \geq 1$, we denote $\hm_{\om}^x$ the harmonic measure in $\om$ with pole at $x \in \om$.

We say that $\Omega$ has the {\it capacity density condition (CDC)} if, for all $x\in \d\Omega$ and $0<r<\diam\Omega$, 
\[\textup{Cap}(\overline{{B}(x,r)} \cap \Omega^c, B(x,2r)) \gtrsim r^{n-1},\] 
where Cap$(\cdot,\cdot)$ stands for the variational $2$-capacity of the condenser $(\cdot,\cdot)$ (see \cite[p. 27]{HKM} for the definition).

A set $E\subset \R^d$ is called $n$-{\textit {rectifiable}} if there are Lipschitz maps
$f_i:\R^n\to\R^d$, $i=1,2,\ldots$, such that 
\begin{equation}\label{eq001}
\HH^n\biggl(E\setminus\bigcup_i f_i(\R^n)\biggr) = 0,
\end{equation}
where $\HH^n$ stands for the $n$-dimensional Hausdorff measure. 

A set $E\subset\R^{d}$ is called $n$-AD-{\textit {regular}} (or just AD-regular or Ahlfors-David regular) if there exists some
constant $C_{0}>0$ such that
$$C_0^{-1}r^n\leq \HH^n(B(x,r)\cap E)\leq C_0\,r^n\quad \mbox{ for all $x\in
E$ and $0<r\leq \diam E$.}$$
Note that any domain with $n$-AD-regular boundary is a CDC domain.

The set $E\subset\R^{d}$ is  uniformly  $n$-{\textit {rectifiable}} if it is 
$n$-AD-regular and
there exist constants $\theta, M >0$ such that for all $x \in E$ and all $0<r\leq \diam E$ 
there is a Lipschitz mapping $g$ from the ball $B_n(0,r)$ in $\R^{n}$ to $\R^d$ with $\text{Lip}(g) \leq M$ such that
$$
\HH^n (E\cap B(x,r)\cap g(B_{n}(0,r)))\geq \theta r^{n}.$$


The analogous notions for  measures are the following. 
A Radon measure $\mu$ on $\R^d$ is $n$-{\textit {rectifiable}} if it vanishes outside  an $n$-rectifiable
set $E\subset\R^d$ and if moreover $\mu$ is absolutely continuous with respect to $\HH^n|_E$.
On the other hand, $\mu$ is called $n$-AD-{\textit {regular}} if it is of the form $\mu=g\,\HH^n|_E$, where 
$E$ is $n$-AD-regular and $g:E\to (0,+\infty)$ satisfies $g(x)\sim1 $ for all $x\in E$, with the implicit constant independent of $x$.
If, moreover, $E$ is uniformly $n$-rectifiable, then $\mu$ is called uniformly $n$-{\textit {rectifiable}}.

{ If  $\Omega\subset\R^{n+1}$ is an open set and $c \in (0,\frac{1}{2})$, we say that  a point $x \in \om$  is a $c$-{\textit corkscrew point} for the ball $B(\xi,r)$ with
 $\xi \in\partial\Omega$ and $0<r\leq\diam(\Omega)$, if the ball $B(x,c r)\subset \Omega\cap B(\xi,r)$
with radius $r'\sim r$, with the implicit constant independent of $\xi$ and $r$. }

We are now ready to state our main theorem.

\begin{theorem} \label{teo1}
For $n\geq 1$, let $\om_1, \om_2\subsetneq \R^{n+1}$ be two  open and connected sets satisfying the CDC that are disjoint. Assume also that $F:=\d\om_{1}\cap \d\om_{2} \neq \varnothing$. Suppose that there exist $\ve, \ve' \in (0,1)$ such that if $B$ is a ball centered at $F$ with $\diam B  \leq \min(\diam \d\om_1, \diam \d\om_2)$, there exist points $x_i \in \frac{1}{4}B \cap\om_i$ for $i=1,2$, so that the following holds: for any $E \subset B$,
\begin{equation}\label{eq:Amu1}
\mbox{if}\quad \omega_1^{x_1}(E)\leq \ve\, \omega_1^{x_1}(B), \quad\text{ then }\quad \omega_2^{x_2}(E)\leq \ve'\,\omega_2^{x_2}(2B).
\end{equation} 
If $\ve'$ is small enough (depending only on $n$ and the CDC constant), 
then there exist $\theta_i \in (0,1)$ and a uniformly rectifiable set $\Sigma_B \subset \R^{n+1}$ such that 
$$\omega_i^{x_i}(\Sigma_B \cap F \cap B) \geq \theta_i, \quad i=1,2.$$ 
Moreover, {if $x_{1}$ is a corkscrew point for $\frac{1}{4} B$, then  there exists $c>0$ so that }
\begin{equation}
\label{e:bigpiece}
\cH^{n}(\Sigma_{B}\cap F\cap B)\geq c \,r(B)^{n}. 
\end{equation}
\end{theorem}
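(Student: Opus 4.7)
My plan is to establish a bilateral approximation by planes ($\baup$) condition on a large portion of $F\cap B$ via a corona/stopping-time decomposition, and then invoke the Tolsa / David--Semmes $\baup$-characterization of uniform rectifiability. The architecture parallels the qualitative two-phase argument of \cite{AMT16, AMTV16}, with quantitative stopping-time estimates replacing the compactness/blow-up techniques used there.

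First I would establish basic mass bounds: Bourgain's lemma together with the CDC on each side gives $\omega_i^{x_i}(B)\gec 1$ for $i=1,2$. I would then fix a David--Mattila-type dyadic lattice on $F\cap B$ --- the lack of doubling for the $\omega_i$ forces indexing by scale rather than by measure --- and run a stopping-time decomposition from $B$. Descending through dyadic subcubes, I stop at a cube $Q$ when either (a) the scale-invariant comparison \eqref{eq:Amu1} fails at $Q$ with thresholds slightly smaller than at the top level, (b) the ratio $\omega_1^{x_1}(Q)/\omega_2^{x_2}(Q)$ has drifted too far from that of the parent, or (c) the scale of $Q$ has dropped below the useful corkscrew regime. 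A Carleson packing condition for stopping cubes should then follow by iterating \eqref{eq:Amu1}: the harmonic mass lost to stopping at each generation is a controlled fraction of the mass of $B$.

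The principal step is to show that every cube $Q$ in the non-stopped tree satisfies $\baup$ at its scale. Using the CDC one produces corkscrew balls $B^1\subset \om_1$ and $B^2\subset \om_2$ on opposite sides of $F$ at scale $\sim r(Q)$; the nondegenerate two-sided comparison inherited from \eqref{eq:Amu1} forces the Green functions with poles at these corkscrews to have balanced mass across $F$, and a quantitative barrier / maximum-principle argument (the effective analogue of the rigidity step in \cite{AMT16,AMTV16}) then squeezes $F\cap Q$ into a thin tubular neighborhood of a single hyperplane. This is the main difficulty of the proof, since no step may invoke doubling of the $\om_i$; overcoming this is precisely what makes the conclusion substantially stronger than the qualitative version.

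Assembling these two ingredients --- $\baup$ on the tree of non-stopped cubes and the Carleson packing of stopping cubes --- the Tolsa / David--Semmes $\baup$-characterization yields an AD-regular, uniformly $n$-rectifiable set $\Sigma_B$, and summing the harmonic masses along the non-stopped tree delivers $\omega_i^{x_i}(\Sigma_B\cap F\cap B) \geq \theta_i$ for some $\theta_i>0$. For the Hausdorff lower bound \eqref{e:bigpiece}, when $x_1$ is additionally a corkscrew for $\tfrac14 B$, I would conclude via upper bounds for the density: the Green function $G_{\om_1}(x_1,\cdot)$ satisfies $G_{\om_1}(x_1,y)\lec r(B)^{1-n}$ at scale $r(B)$ by standard estimates in CDC domains with interior corkscrews, so on the AD-regular set $\Sigma_B$ one has $d\omega_1^{x_1}/d\HH^n|_{\Sigma_B\cap F} \lec r(B)^{1-n}\cdot r(B)^{-1}\cdot r(B) = r(B)^{-n}$ (pointwise by a Caffarelli--Fabes--Mortola--Salsa style argument), which combined with the lower bound $\omega_1^{x_1}(\Sigma_B\cap F\cap B) \geq \theta_1$ gives $\HH^n(\Sigma_B\cap F\cap B) \gec r(B)^n$.
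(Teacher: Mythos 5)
Your proposal replaces the paper's Riesz-transform argument with a corona / stopping-time decomposition aimed at a $\baup$-type characterization of uniform rectifiability, but this route has two structural gaps that I do not see how to close.

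The first and most serious problem is that the $\baup$ characterization (David--Semmes, Tolsa) is a characterization of uniform rectifiability \emph{among $n$-AD-regular sets}; it presupposes that the candidate set carries an Ahlfors--David regular measure. But here $F$ need not be AD-regular in any sense: Proposition \ref{p:example} exhibits $\Omega_1,\Omega_2\subset\mathbb{C}$ with $\partial\Omega_1=\partial\Omega_2$ of infinite $\cH^1$-measure and containing a purely unrectifiable subset of positive length, while still satisfying \eqref{eq:Amu1}. So even if you managed to show that $F\cap Q$ is trapped in a thin slab around a hyperplane for all non-stopped $Q$, this gives no control on $\cH^n|_F$ and you cannot invoke a $\baup$ theorem on $F$. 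The set $\Sigma_B$ in the conclusion is \emph{not} a subset of $F$ --- it is a separately constructed uniformly rectifiable set that merely carries most of the harmonic measure. Producing such a set from harmonic measure without AD-regularity is exactly why the paper routes everything through $L^2$-boundedness of the Riesz transform on a good subset $G\subset F$ and then applies the Nazarov--Tolsa--Volberg result (the version stated as Theorem \ref{t:pajot}, extracted from the proof of Theorem 2.1 in \cite{NToV14-pubmat}), together with the Girela-Sarri{\'o}n--Tolsa criterion (Theorem \ref{t:GT}) in the low-density regime.

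The second gap is the flatness step. You assert that the two-sided comparison from \eqref{eq:Amu1} together with a ``quantitative barrier / maximum-principle argument'' squeezes $F\cap Q$ into a thin neighborhood of a hyperplane on non-stopped cubes. This is precisely what is \emph{not} available by elementary means, and the paper flags it explicitly: ``flatness is also essential and not \emph{a priori} granted.'' The actual mechanism is a compactness argument (Lemma \ref{limlem} for limits of CDC harmonic measures and Lemma \ref{l:beta-reduction}), which passes to a limiting pair of CDC domains and then invokes the full qualitative two-phase rectifiability theorem of \cite{AMTV16} to deduce $\beta_{\omega_2,1}$-smallness of some ball at a comparable scale. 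That argument cannot be replaced by a direct barrier estimate in a CDC domain, because there is no doubling, no Harnack chain, and no interior corkscrew in either $\Omega_i$ away from the chosen pole; in particular the Caffarelli--Fabes--Mortola--Salsa comparison you invoke for the density bound $d\omega_1/d\cH^n\lec r(B)^{-n}$ in the final paragraph requires NTA-type geometry and does not hold under CDC alone. In the paper the needed density upper bound \eqref{e:upperregular} comes instead from the Alt--Caffarelli--Friedman monotonicity formula combined with Lemma \ref{l:otherside} and the good-set lemma (Lemma \ref{l:goodset}), which controls the \emph{product} $\Theta^n_{\omega_1}(x,r)\,\Theta^n_{\omega_2}(x,r)$ and then upgrades it to a one-sided bound on $G$ where the Radon--Nikodym derivative is comparable to $1$.

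In short, your outline assumes the two things --- AD-regularity of $F$ and scale-by-scale flatness --- that are the actual obstructions the paper is built around, and neither is obtainable by the arguments you sketch.
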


This problem can be seen as the ``two-phase" analogue of the ``one-phase" free boundary problems for harmonic measure that have been extensively studied over the last few years in connection with uniform rectifiability. In particular, the usual standing assumptions are that the boundary of the domain is $n$-AD-regular and the domain satisfies the corkscrew condition. Then if harmonic measure satisfies an $A_\infty$-type condition with respect to the ``surface measure" 
(in the same manner as in \eqref{eq:Amu1}) then the boundary of the domain is uniformly rectifiable. This was first proved by Hofmann, Martell and Uriarte-Tuero in \cite{HMU} when the domain is also uniform and was further extended by Hofmann and Martell in \cite{HMIV}\footnote{This preprint is contained in the published paper \cite{HLMN}.} in the context described earlier. In the absence of any additional geometric assumption, the second and third named authors proved in \cite{MT15} that if harmonic measure satisfies a weak-$A_\infty$-type condition with respect to a Frostmann measure supported on the boundary of $\om$, the Riesz transform is bounded from $L^2(\mu)$ to $L^2(\mu)$. This can still be considered as a rather quantitative piece of information, since if $\mu$ is AD-regular this implies that $\mu$ is a uniformly rectifiable measure by the work of the third author with Nazarov and Volberg \cite{NTV14-Acta}. Although, in the absence of lower AD-regularity,  it only shows that $\mu$ is $n$-rectifiable by \cite{NToV14-pubmat}. For elliptic operators, similar results have been proved in uniform AD-regular domains in \cite{HMT} and under the abovementioned standing assumptions in \cite{AGMT}.

On the other hand, in  \cite{KPT09}, Kenig, Preiss and Toro studied the two-phase problem in NTA domains under rather qualitative assumptions, i.e., when $\hm_{1}$ and  $\hm_{2}$ are absolutely continuous and showed that Hausdorff dimension of harmonic measure is exactly $n$. Recently, the three of us in \cite{AMT16} and in collaboration with Volberg in \cite{AMTV16}, we proved that in general domains mutual absolute continuity of the two harmonic measures implies rectifiability. For more details concerning the history of such problems we refer for instance to the introduction of \cite{AMT16}. In the current paper we quantify the assumption of absolute continuity and  obtain  big pieces (in terms of harmonic measure) of uniformly rectifiable sets.

{ We would like to highlight the fact that in our main result we do not assume that $\om$ is a corkscrew domain, while we relax the  measure theoretic assumption that the boundary $\d \om$ is $n$-AD-regular to the potential theoretic one that the domain $\om$ satisfies a 2-sided CDC. This is a big difference from previous results since by a deep theorem of Lewis \cite{Lewis}, it follows that $\Omega$ is a CDC domain
if and only if there exists some $\ve>0$ and some $R>0$ such that
$$\HH_\infty^{n-1+\ve}(B(x,r)\backslash \Omega) \sim r^{n-1+\ve}
\quad \mbox{ for all $x\in\partial\Omega$ and all $0<r\leq R$,}$$
where $\HH_\infty^s$ stands for the $s$-dimensional Hausdorff content. That is, we only require the complement to be large in some sense, while the domain could be very irregular. So, we no longer have the doubling condition of the ``surface" measure which has been crucial to prove uniform rectifiability in the one phase problems. To our knowledge, this is the first time that such a quantitative result is obtained in the absence of Ahlfors-David regularity.

We should say more about the proof of Theorem \ref{teo1} in comparison with Bishop's conjecture that was solved in \cite{AMT16} and \cite{AMTV16} which accounts for a qualitative version of our theorem. Namely, if the harmonic measures $\hm_1$ and $\hm_2$ associated with two disjoint domains $\om_1$ and $\om_2$ such that $\d \om_1 \cap \d \om_2 \neq \varnothing$ are mutually absolutely continuous on a subset of their common boundary then the set is rectifiable. The main difficulty of the problem is that the surface measure of the boundary is not assumed to be locally finite (otherwise the result would follow from \cite{AHM3TV}) and there may be points where the harmonic measure has zero density. The idea there was to use the result from \cite{AHM3TV} for the points of positive density and the rectifiability criterion of  Girela-Sarri{\'o}n and the third named author \cite{GT} for the points of zero density.  A step of critical importance for the use of the latter criterion is the reduction to flat balls that are far away from the poles of the harmonic measures $\hm_i$. This was done by means of blow-up arguments and tangent measures. In particular, it was shown that for $\hm_{i}$-a.e. point $\xi \in \d \om_1 \cap \d \om_2$  for which the Radon-Nikodym derivative $\frac{d\hm_{1}}{d\hm_{2}}$ was positive and finite, all the tangent measures of $\hm_i$ at $\xi$ are flat, which was one of the main novelties in \cite{AMT16}. Then using the monotonicity formula of Alt-Caffarelli-Friedman and non-homogeneous Calder\'on-Zygmund theory we can prove the assumptions of the main theorem in  \cite{GT} and thus, obtain rectifiability.

In the current paper, we follow the same strategy but due to the fact that we ask for a quantitative result, the proof is  different and requires more delicate arguments in order to prove the estimates for the Riesz transform. In a sense, the main enemy is still the same as in \cite{AMT16} and \cite{AMTV16}, i.e., the lack of  Ahlfors-David regularity and the possible abundance of points $x \in \d \om_1 \cap \d \om_2$ for which $\omega_{i}(B(x,r)) \ll r^n$ for $r< r_x$.  In contrast to \cite{AMT16} where the set of points with positive density can be handled via a direct application of \cite{AHM3TV}, we cannot do the same for the set of points for which $\hm_i (B(x,r)) \approx r^n$ by using \cite{MT15} or a Tb theorem and \cite{NTV14-Acta}. Instead, we need to pick up our ``good" set where the Radon-Nikodym derivative $\frac{d\hm_2}{d\hm_1} \approx 1$ to capture a very big piece in measure, i.e., all but $\ve\%$ of the total harmonic measure, and use a theorem that shows $L^2$-boundedness of the Riesz transform on  ``big pieces"  which was ``hidden" (and not stated as such)  in the proof of Theorem 2.1 in \cite{NToV14-pubmat} (see Sections \ref{sec:4} and \ref{sec:5}). For the points of low (uniform) density, arguing with tangent measures  is not helpful any more and we need to come up with compactness arguments (see Lemma \ref{l:beta-reduction}) that will account for the quantitative substitutes of the blow-up analysis we did in \cite{AMT16} and \cite{AMTV16}. We handle this in Sections \ref{subsec:2.4} and \ref{subsec:2.5}, where we show that any sequence of domains satisfying CDC that contains a fixed corkscrew ball and have the weak-$A_\infty$ condition, has a convergence subsequence to a domain $\om_\infty$ that satisfies the CDC and contains the aforementioned ball as a corkscrew ball. Moreover,  the sequence of Green functions $G_{\om_j}$ converges to the Green function $G_{\om_\infty}$ locally uniformly, the harmonic measures $\hm_{\om_j} \rightharpoonup \hm_{\om_\infty}$, and the weak-$A_\infty$ condition for the two harmonic measures is preserved in $\om_\infty$. So, arguing by contradiction and using \cite{AMTV16} for the limiting domain we can reduce case to flat balls that are far away from the poles. This is one of the main ingredients for the proof of the Riesz transform estimates in Theorem \ref{t:GT}, while flatness is also essential and not  {\it a priori} granted .


It is important to note that the set $\Sigma_{B}$ in Theorem \ref{teo1} need not be the whole boundary, and in fact the boundary in $B$ can still have infinite measure. 

\begin{proposition}\label{p:example}
There are disjoint domains $\Omega_{1}$ and $\Omega_{2}\subseteq \mathbb{C}$ so that $\d\Omega_{1}=\d\Omega_{2}$ and \eqref{eq:Amu1} holds for all $E\subseteq \d\Omega_{1}$, yet $\cH^{1}(\d\Omega_{1})=\infty$ and $\d\Omega_{1}$ contains a purely unrectifiable set of positive $\cH^{1}$-measure, that is, a set $K$ so that $\cH^{1}(\Gamma\cap K)=0$ for any curve $\Gamma$ of finite length. 
\end{proposition}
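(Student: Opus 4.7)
The plan is to exhibit a Jordan curve $\Gamma\subset\C$ and take $\om_1$, $\om_2$ to be the two complementary (bounded and unbounded) components of $\C\setminus\Gamma$; then $\d\om_1=\d\om_2=\Gamma$ holds automatically by the Jordan curve theorem. The task is to design $\Gamma$ so that (i) $\HH^1(\Gamma)=\infty$, (ii) $\Gamma$ contains a purely $1$-unrectifiable set of positive $\HH^1$-measure, and (iii) the interior and exterior harmonic measures $\omega_1$, $\omega_2$ satisfy \eqref{eq:Amu1} on every ball centered on $\Gamma$.

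Begin with $\Gamma_0=\d\bD$, for which $\omega_1$ and $\omega_2$ (with poles at $0$ and $\infty$) are both normalized arclength and \eqref{eq:Amu1} holds trivially. Modify iteratively: at stage $k$ choose $4^k$ disjoint short sub-arcs of $\Gamma_{k-1}$ arranged according to the level-$k$ pattern of the $4$-corner Cantor set, and on each one replace a tiny central arc by a narrow $U$-shaped Jordan detour of depth $\ell_k$ and width $\epsilon_k$ that dips into the interior and whose tip is placed at the center of the corresponding stage-$k$ square of the Cantor construction. Select $\ell_k$ and $\epsilon_k$ so that $\sum_k 4^k\ell_k=\infty$ (forcing $\HH^1(\Gamma)=\infty$) while $\sum_k 4^k\epsilon_k<\infty$ (keeping the detours pairwise disjoint and preserving the Jordan character of the limit $\Gamma=\lim_k\Gamma_k$). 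By construction, the closure of the set of all tips then contains a rescaled copy $K\subset\Gamma$ of the $4$-corner Cantor set, which is purely $1$-unrectifiable with $0<\HH^1(K)<\infty$, giving (ii).

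The delicate step is (iii). The strategy is to force the aspect ratio $\delta_k:=\epsilon_k/\ell_k$ to decay so rapidly that both Riemann maps $\vphi_1:\bD\to\om_1$ and $\vphi_2:\C\setminus\overline{\bD}\to\om_2$ send each detour, and in the limit $K$ itself, to a Lebesgue-null subset of $\d\bD$. This is a standard cusp-distortion consequence of Koebe-type estimates: a $U$-cusp of aspect ratio $\delta$ contracts its tip region of diameter $\sim\ell_k$ into a preimage arc of length $\lesssim\delta_k^{1/2}\,\ell_k$ on $\d\bD$. Summability of $\sum_k 4^k\ell_k\delta_k^{1/2}$ (arranged by taking $\delta_k$ tiny enough) then yields $\omega_i(K)=0$ for $i=1,2$, so $\omega_1$ and $\omega_2$ concentrate on the essentially rectifiable part of $\Gamma$ (the original circle plus the rectifiable flanks of the detours).

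Once this is established, \eqref{eq:Amu1} follows by a scale-by-scale case split on $B$. If $B$ stays away from detour tips at its own scale, both $\omega_i$ are comparable to arclength on a chord-arc, nearly circular piece of $\Gamma$, and the $A_\infty$-type control is classical. If $B$ meets a detour at some stage $k$, then both $\omega_1(E)$ and $\omega_2(E)$ for $E\subset B$ are simultaneously dominated by the Lebesgue measure of the same small preimage arc under $\vphi_i$, and \eqref{eq:Amu1} is inherited from mutual comparability of these pullbacks at that scale. The main technical obstacle is exactly this uniform cusp-distortion estimate across all scales $k$, which must accommodate the divergence $\sum_k 4^k\ell_k=\infty$ demanded by (i) while still producing the Cantor-set accumulation at the tips on which (ii) depends.
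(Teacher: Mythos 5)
Your proposal takes a genuinely different route from the paper, and as written it has a serious gap.

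The paper's construction is a modified von Koch snowflake: at each generation one of the four segments of the generator is ``frozen'' (never subdivided again) and the remaining three are replaced by scaled copies of the generator. The attractor of the three non-frozen similarities is a purely unrectifiable set $K$ of positive finite length, the frozen segments accumulate to a countable union $J$ of segments of infinite total length, and (crucially) the corners created have bounded angle, so the resulting closed curve $\Sigma$ is a quasicircle and $\Omega_1,\Omega_2$ are complementary NTA domains. The mechanism for \eqref{eq:Amu1} is a structural flatness property: every ball $B$ centered on $\Sigma$ contains a concentric sub-ball $B'$ of \emph{comparable} radius (coming from one of the frozen segments) in which $\Sigma\cap B'$ is a single line segment $L_{B'}$, with the two halves of $B'\setminus L_{B'}$ lying in $\Omega_1$ and $\Omega_2$ respectively. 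Choosing poles $p_1,p_2$ symmetric with respect to $L_{B'}$, reflection gives the exact identity $\omega_{B_1'}^{p_1}(E\cap L_{B'}) = \omega_{B_2'}^{p_2}(E\cap L_{B'})$, and the maximum principle then transfers this to the full-domain harmonic measures, yielding \eqref{eq:Amu1} with an essentially elementary argument (change-of-poles in NTA domains takes care of the rest).

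Your cusp construction lacks both of these ingredients, and that is where it breaks down. Forcing the aspect ratios $\delta_k=\epsilon_k/\ell_k\to 0$ precisely kills the quasicircle/NTA property: near the closed end of a $U$-detour, the domain that the slit opens into has corkscrew constant degenerating like $\epsilon_k/r$ at all scales $\epsilon_k< r<\ell_k$, so the ``classical'' $A_\infty$ control you invoke in the chord-arc case is no longer available, and neither is change of poles. More fundamentally, your key claim that near a detour both $\omega_1(E)$ and $\omega_2(E)$ are ``simultaneously dominated by the Lebesgue measure of the same small preimage arc'' conflates the two different Riemann maps $\vphi_1$ and $\vphi_2$: near a cusp tip these distort in sharply different ways (square-root type behaviour for the exterior of a slit versus a high-order, or exponential, decay for the interior of a narrow slit, depending on the cusp profile), so the two pullbacks are \emph{not} mutually comparable. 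You have at best shown both are small, which does not control the ratio; and even if the asymmetry happened to run the favourable direction for the one-sided implication \eqref{eq:Amu1}, you would still have to check the many other ball configurations (balls deep inside a slit, balls straddling a detour base, balls centred at accumulation points of detours) for which the comparison is by no means automatic. In short, the approach replaces the paper's single reflection-symmetry observation by a collection of delicate cusp asymptotics that are not written down and some of which actively work against you; if you want to pursue it, (a) a substitute for the NTA machinery and (b) a genuine two-sided comparison across a cusp are both needed, and neither is supplied. The snowflake route sidesteps both issues by never creating a cusp.
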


It is not hard to alter the example so that in fact $\dim \d\Omega_{1}>1$. The domains themselves are not too complicated and their boundaries are constructed in a similar way to a von Koch curve. We will prove Proposition \ref{p:example} in section \ref{sec:7}. We thank Svitlana Mayboroda for asking us at the ICMAT in 2018  whether such an example existed.

\vv

In the final part of the paper we consider the particular case of $2$-sided NTA domains (see Section
\ref{sec:7} for the precise definition). As a corollary of Theorem \ref{teo1}, we obtain the following result:
\begin{theorem}\label{teoNTA}
Let $\Omega_1\subset\R^{n+1}$ be an NTA domain and assume that $\Omega_2=\R^{n+1}\setminus \overline
\Omega_1$ is an NTA domain as well. The following conditions are equivalent:
\begin{itemize}
\item[(a)] $\omega_2\in A_\infty(\omega_1)$.
\item[(b)] both $\omega_1$ and $\omega_2$ have very big pieces of uniformly $n$-rectifiable measures.
\item[(c)] either $\omega_1$ or $\omega_2$ has big pieces of uniformly $n$-rectifiable measures.
\item[(d)] {$\Omega_1$ and $\Omega_2$ have joint big pieces of chord-arc subdomains.}
\end{itemize}
\end{theorem}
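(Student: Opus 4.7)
The plan is to establish the cyclic chain $(a)\Rightarrow(b)\Rightarrow(c)\Rightarrow(a)$, exploiting throughout the strong properties of complementary NTA domains: both satisfy the CDC, interior corkscrews exist at every scale of every ball centered on $F=\partial\Omega_1=\partial\Omega_2$, both $\omega_1$ and $\omega_2$ are doubling on $F$, and the change of poles gives $\omega_i^{x_i}(B)\sim 1$ whenever $x_i$ is a corkscrew for $\tfrac14 B$. The direction $(b)\Rightarrow(c)$ will be immediate from the definitions.

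For $(a)\Rightarrow(b)$, the hypothesis $\omega_2\in A_\infty(\omega_1)$ supplies, for every $\varepsilon'>0$, a compatible $\varepsilon>0$ so that \eqref{eq:Amu1} holds in every ball centered on $F$; since $A_\infty$ is symmetric for doubling measures, both directions of \eqref{eq:Amu1} are available. Choosing $\varepsilon'$ small enough, I would invoke Theorem~\ref{teo1} in every such $B$ to produce a UR set $\Sigma_B$ with $\omega_i^{x_i}(\Sigma_B\cap F\cap B)\geq\theta_i$, i.e.\ ``big pieces'' with a uniform $\theta_i>0$. To upgrade this to ``very big pieces'' for an arbitrary $\eta>0$, my approach would be a stopping-time iteration: inside $B\setminus\Sigma_B$, extract a Vitali-type family of maximal sub-balls centered on $F$ carrying the residual $\omega_i$-mass, apply Theorem~\ref{teo1} in each, and repeat $k$ times. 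Since a fixed proportion $\theta_i$ of the residual mass is removed at every stage, after $k$ iterations at most $(1-\theta_i)^k$ of $\omega_i^{x_i}(B)$ remains uncovered; consolidating the finitely many UR sets into a single one with UR constants depending on $k$ (via the standard fact that finite unions of UR sets are UR) will yield the very big piece property.

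For $(c)\Rightarrow(a)$, I would assume $\omega_1$ has big pieces of UR measures, so that every ball $B$ centered on $F$ contains an AD-regular UR set $\Sigma_B\subseteq F\cap B$ with $\omega_1(\Sigma_B)\gtrsim \omega_1(B)$. Since $\Omega_1$ is NTA, the one-phase $A_\infty$ theory applied to the AD-regular UR piece $\Sigma_B$ (Hofmann--Martell--Uriarte-Tuero and subsequent refinements valid on AD-regular UR subsets of NTA-domain boundaries) will give $\omega_1\lfloor\Sigma_B\sim\cH^n\lfloor\Sigma_B$ with uniform constants. The identical reasoning applied to $\Omega_2$, which is also NTA with corkscrews on the opposite side of $\Sigma_B$, will give $\omega_2\lfloor\Sigma_B\sim\cH^n\lfloor\Sigma_B$, and hence $\omega_1\sim\omega_2$ on $\Sigma_B$. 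A self-improvement/covering argument for $A_\infty$, exploiting the doubling of both $\omega_1$ and $\omega_2$ together with the uniformity of the big-piece proportion across scales, will then upgrade this local comparability on a big piece to the global $\omega_2\in A_\infty(\omega_1)$.

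The hardest part will be $(c)\Rightarrow(a)$: I must apply the one-phase $A_\infty$ principle not to the (possibly very irregular) full boundary $\partial\Omega_i$ but to its AD-regular UR subset $\Sigma_B$, and then convert local comparability on a big piece into a \emph{global} $A_\infty$ relation between $\omega_1$ and $\omega_2$. This will require both a suitable one-phase result on UR pieces of NTA-domain boundaries and a careful self-improvement argument leveraging the doubling of the harmonic measures. A secondary technical difficulty will be consolidating the iteratively extracted UR sets in $(a)\Rightarrow(b)$ into a single UR set with controlled constants.
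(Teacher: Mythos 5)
Your overall chain $(a)\Rightarrow(b)\Rightarrow(c)\Rightarrow(a)$ matches the paper's, and $(b)\Rightarrow(c)$ is indeed trivial, but both nontrivial implications have genuine gaps.

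For $(a)\Rightarrow(b)$, your iteration plan overlooks what ``very big pieces of uniformly $n$-rectifiable measures'' actually requires. It is not enough to cover $(1-\ve)$ of the $\omega_1$-mass of $B$ by a union of UR sets: you must also produce a single set $G$ with the two-sided density comparability $\omega_1(F)\approx_\ve \HH^n(F)\,\Theta^n_{\omega_1}(B)$ for all $F\subset G$, i.e.\ $\omega_1|_G$ must be an AD-regular measure with density pinned to $\Theta^n_{\omega_1}(B)$. Iterating Theorem~\ref{teo1} on residual sub-balls gives UR sets with density constants tied to the (smaller, possibly much degenerate) balls you stop at; taking a finite union does not restore a single density scale comparable to $\Theta^n_{\omega_1}(B)$. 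The paper avoids this by working on Christ cubes $Q$, running a \emph{low-density stopping time} $\LD^k(Q)$ whose mass decays geometrically (and showing that decay via Theorem~\ref{teo1} plus AD-regularity of the UR piece), intersecting the complementary set $E_0$ with a high-density exceptional set $E_1$ controlled via the ACF monotonicity Lemma~\ref{l:otherside}, and then building a \emph{single} AD-regular measure $\sigma$ and good subset $G$ via the NToV machinery (Theorem~\ref{t:pajot}, Lemma~\ref{lem:nu-BP-ADR}, Remark~\ref{rempq}). That is the step your approach is missing entirely.

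For $(c)\Rightarrow(a)$, the obstacle you flag — that HMU-type one-phase theorems require the full boundary to be AD-regular, whereas $\partial\Omega_i$ here need not be — is real, and your proposal does not resolve it. There is no known result that lets you ``apply the one-phase $A_\infty$ theory to the AD-regular UR piece $\Sigma_B$'' of an otherwise wild NTA-domain boundary and directly conclude $\omega_i\lfloor\Sigma_B\sim\cH^n\lfloor\Sigma_B$. The paper's actual argument replaces this with a subdomain trick: by \cite[Theorem~6.4]{Azzam} there is a chord-arc (hence NTA with AD-regular boundary) subdomain $V\subset\Omega_2$ with $G\subset\partial\Omega_2\cap\partial V$. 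David--Jerison then gives $\omega^{p_2}_V\in A_\infty(\HH^n|_{\partial V})$, whence $\omega^{p_2}_V(G\cap F)\gtrsim1$ once $\HH^n(G\cap F)\approx r(B)^n$, and the maximum principle transfers this directly to $\omega_2^{p_2}(F)\gtrsim1$. This also shows that the ``self-improvement/covering argument'' you invoke to go from local comparability to global $A_\infty$ is unnecessary: after the subdomain/maximum principle step, the $A_\infty$ implication $\omega_1(F)\geq\delta\,\omega_1(B)\Rightarrow\omega_2(F)\geq\ve\,\omega_2(B)$ follows ball by ball. Without Azzam's chord-arc subdomain and the maximum-principle transfer, your proof of $(c)\Rightarrow(a)$ does not close.
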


We remark that, again, we do not assume the surface measure $\HH^n|_{\partial\Omega_1}$ to be locally finite in the theorem.
For the precise notions of $A_\infty(\omega_1)$,  very big pieces of uniformly $n$-rectifiable measures,
 big pieces of uniformly $n$-rectifiable measures, and joint big pieces of chord-arc subdomains, see Section \ref{sec:8}. For the moment, let us say that, for $n$-AD-regular
measures, having very big pieces of uniformly $n$-rectifiable measures or big pieces of uniformly $n$-rectifiable measures is equivalent
to uniform $n$-rectifiability. So Theorem \ref{teoNTA} can be understood as the two-phase counterpart of the David-Jerison theorem \cite{David-Jerison} in the one-phase case, which asserts that for an NTA 
domain $\Omega\subset\R^{n+1}$ with $n$-AD-regular boundary, the condition $\omega\in A_\infty(\HH^n|_{\partial\Omega})$ is equivalent to the uniform $n$-rectifiability of $\HH^n|_{\partial\Omega}$.

\section{Preliminaries}

\subsection{Harmonic measure and Green function} For a (possibly unbounded) domain $\Omega \subset \R^{n+1}$ and $f:\partial \Omega \to [-\infty, \infty]$, we define the {\it upper class} $\mathcal{U}_f$ of $f$ to be the class of functions $u$ such that $u$ is superharmonic, bounded below, and lower semicontinuous for all points on $\d \Omega$. Similarly, we define the {\it lower class} $\mathcal{L}_f$ of $f$ to be the class of functions $u$ such that $u$ is subharmonic, bounded above, and upper semicontinuous for all points on $\d \Omega$. We define 
$$ \overline{H}_f(x) = \inf\{ u \in \mathcal{U}_f\}\quad  \textup{and}  \quad \underline{H}_f(x) = \sup\{ u \in \mathcal{L}_f\}$$
to be the {\it upper} and {\it lower Perron solution} of $f$ in $\Omega$. If $\mathcal{U}_f =\varnothing$(resp. $\mathcal{L}_f = \varnothing$), then we set $ \overline{H}_f = \infty$ (resp. $\underline{H}_f=-\infty$).
 
 A function $f: \d \om \to [-\infty,\infty]$ is called {\it resolutive} if the upper and the lower Perron solutions of $f$ coincide, and if we denote ${H}_f=\overline{H}_f=\underline{H}_f$ then it is harmonic in $\om$. Notice that if the complement of $\om$ has positive capacity then every continuous function is resolutive.

Let $\EE$ denote the {\it fundamental solution} for the Laplace equation in $\R^{n+1}$, which is defined as
  \begin{equation}
   \mathcal{E}(x)=
    \begin{cases}
      c_n'\,|x|^{1-n}, & \text{if}\,\, n\geq 2 \\
      \frac{1}{2\pi}\,\log\frac{1}{|x|}, & \text{if}\,\,n=1,
    \end{cases}
  \end{equation}
where $c_n'>0$ a dimensional constant.

A {\it Green function} $G_{\Omega}:\Omega\times \Omega\rightarrow[0,\infty]$ for an open set $\Omega\subset \R^{n+1}$ is a function with the following properties: for each $x\in \Omega$, $G_{\Omega}(x,y)=\EE(x-y) + h_{x}(y)$ where $h_{x}$ is harmonic on $\Omega$, and whenever $v_{x}$ is a nonnegative superharmonic function that is the sum of $\EE(x-\cdot)$ and another superharmonic function, then  $v_{x}\geq G_{\Omega}(x,\cdot)$ (\cite[Definition 4.2.3]{Helms}). 

An open subset of $\R^{n+1}$ having a Green function will be called a {\it Greenian} set. The class of domains considered in Theorem \ref{teo1} are always Greenian. Indeed, all open subsets of $\R^{n+1}$ are Greenian for $n\geq 2$ (\cite[Theorem 4.2.10]{Helms}); in the plane, domains with nonpolar boundaries are Greenian by Myrberg's Theorem (see Theorem 5.3.8 on p.\ 133 of \cite{AG}). In particular, if a domain satisfies the CDC, then it is Greenian. 

If $\om$ is a Greenian domain and  $y \in \om$, then if we denote $u_y(x)= {H}_{\EE(\cdot-y)}(x)$ for $x \in \om$ (since $\EE(\cdot-y)$ is resolutive),  by \cite[Definition 4.3.14]{Helms}, we have that
\begin{equation}\label{eq:Green-repr}
G_{\Omega}(x,y)=\EE(x-y)-u_y(x), \quad \textup{for}\,\, x \in \om.
\end{equation}
Note that in \cite{Helms} it is only stated for bounded domains $\om$ but it is enough for the  complement  of $\om$ to have positive capacity.

For each $x \in \Omega$, one can construct the {\it harmonic measure} $\omega^x_\Omega$ (see e.g. \cite[Section 11]{HKM},  \cite[p. 172]{AG}, or \cite[p. 217]{Helms}). In fact, if $\om$ is a Greenian domain, for any continuous function $f$, since the mapping $f \mapsto {H}_f$ is a positive linear functional, then by Riesz representation theorem, it is given by 
$${H}_f(x) = \int_{\partial_\infty \Omega} f(y) \,d\omega_\Omega^x(y),$$
where $\d_\infty\Omega=\d\Omega$ if $\Omega$ is bounded and $\d_\infty\Omega=\d\Omega\cup\{\infty\}$ otherwise.
 Remark  that constant functions are continuous and  since $H_1(x)=1$, for any $x \in \Omega$, we have that $\omega_\Omega^x(\partial_\infty \Omega)=1$, for any $x \in \Omega$.

Given a signed Radon measure $\nu$ in $\R^{n+1}$ we consider the $n$-dimensional Riesz
transform
$$\RR\nu(x) = \int \frac{x-y}{|x-y|^{n+1}}\,d\nu(y),$$
whenever the integral makes sense. For $\ve>0$, its $\ve$-truncated version is given by 
$$\RR_\ve \nu(x) = \int_{|x-y|>\ve} \frac{x-y}{|x-y|^{n+1}}\,d\nu(y).$$
and we set
$$\RR_{*,\delta} \nu(x)= \sup_{\ve>\delta} |\RR_\ve \nu(x)|.$$
Notice that the kernel of the Riesz transform is
\begin{equation}\label{eqker}
K(x) = c_n\,\nabla \EE(x),
\end{equation}
for a suitable absolute constant $c_n$. 

\vv

The following result, sometimes known as ``Bourgain's estimate", also holds. For a proof see e.g. Lemma 11.21 in \cite{HKM}.

\begin{lemma}\label{lem:bourgain}
Let $\Omega\subsetneq \bR^{n+1}$, $n\geq 1$,  be an open set that satisfies the CDC,  $x\in \d\Omega$, and $0<r\leq\diam \partial\Omega$.  Then 
\begin{equation}\label{eq:bourgain}
\hm^y(B(x,r))\geq c >0, \;\; \mbox{ for all }y\in \Omega\cap B(x,\tfrac{r}{2}),
\end{equation}
where $c$ depends on $n$ and the CDC constant.
\end{lemma}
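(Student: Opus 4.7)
The plan is to reproduce the standard capacitary-potential argument for Bourgain's estimate (see \cite[Lemma~11.21]{HKM}), adapted to the present CDC formulation. The aim is to construct an auxiliary function on $\om$ that is nonnegative, harmonic, bounded above by one, bounded below at $y$ by a dimensional constant, and whose nontrivial boundary values live near $B(x,r)\cap\partial\om$; comparison with $\hm_{\om}^{y}(B(x,r))$ via the maximum principle then closes the argument.

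Concretely, I would pick $s<r/2$ (and let $s\uparrow r/2$ at the end) and set $K:=\overline{B}(x,s)\cap\om^{c}$. The CDC immediately gives
$$\Cap\bigl(K,B(x,2s)\bigr)\gtrsim s^{n-1},$$
with implicit constant depending only on $n$ and the CDC constant. I would then consider the capacitary (equilibrium) measure $\nu$ of $K$ relative to $B(x,2s)$ and its Green potential
$$U^{\nu}(z):=\int g_{B(x,2s)}(z,w)\,d\nu(w).$$
Standard potential theory ensures that $U^{\nu}$ is harmonic on $B(x,2s)\setminus K$, equals one quasi-everywhere on $K$, vanishes on $\partial B(x,2s)$, is bounded by one on $B(x,2s)$, and that $|\nu|=\Cap(K,B(x,2s))$.

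The heart of the proof then consists of two ingredient estimates. The first is a pointwise lower bound $U^{\nu}(y)\gtrsim 1$: for $y,w\in B(x,s)$ the Kelvin-type formula for the Green function of $B(x,2s)$ yields $g_{B(x,2s)}(y,w)\gtrsim s^{1-n}$, with a constant depending only on $n$ (the case $n=1$ uses the logarithmic fundamental solution but gives the same bound), so integrating against $\nu$ and invoking the CDC estimate on $|\nu|$ delivers the claim. The second is an upper bound through harmonic measure: since $K\subset\om^{c}$, the function $U^{\nu}$ is harmonic on $D:=\om\cap B(x,2s)$, it is bounded by one on $\partial\om\cap\overline{B}(x,2s)$, and it vanishes on $\om\cap\partial B(x,2s)$. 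The CDC forces every point of $\partial\om$ to be regular for the Dirichlet problem (Wiener's criterion), so the maximum principle for Perron--Wiener--Brelot solutions gives
$$U^{\nu}(y)\;\leq\;\hm_{D}^{y}\bigl(\partial\om\cap\overline{B}(x,2s)\bigr)\;\leq\;\hm_{\om}^{y}\bigl(\overline{B}(x,2s)\bigr),$$
where the last inequality is the usual path-monotonicity of harmonic measure under shrinking the domain (any Brownian path from $y$ that first exits $D$ through $\partial\om\cap\overline{B}(x,2s)$ also first exits $\om$ inside $\overline{B}(x,2s)$). Combining both estimates and letting $s\uparrow r/2$ yields $\hm^{y}(B(x,r))\gtrsim 1$.

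The one genuinely technical point is the lower Green-function estimate $g_{B(x,2s)}(y,w)\gtrsim s^{1-n}$ for $y,w\in B(x,s)$, which I would deduce from the explicit formula for the Green function of a ball, using that both points are a definite distance from $\partial B(x,2s)$. This is standard but requires slightly separate bookkeeping for $n=1$ and $n\geq 2$ because of the different form of the fundamental solution; apart from this, the argument is essentially a routine application of the CDC, the definition of the equilibrium potential, and the boundary regularity it implies.
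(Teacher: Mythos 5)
Your argument is correct and reproduces the standard capacitary-potential proof of Bourgain's estimate, which is exactly the proof the paper points to (Lemma~11.21 of \cite{HKM}): lower-bound the equilibrium potential of $K=\overline{B}(x,s)\cap\Omega^{c}$ at $y$ using the CDC together with the Green-function lower bound $g_{B(x,2s)}(y,w)\gtrsim s^{1-n}$ for $y,w\in \overline{B}(x,s)$, and upper-bound it by $\hm_{\Omega}^{y}(B(x,r))$ via the maximum principle and the monotonicity of harmonic measure under shrinking the domain. No gaps; the technical points you flag (the separate logarithmic case $n=1$, and the passage from $\hm_{D}^{y}$ to $\hm_{\Omega}^{y}$) are handled exactly as you describe.
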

\vv


\vv

The next result is also standard and follows from Lemma \ref{lem:bourgain}. For a proof see e.g. Lemma 2.3 in \cite{AM15}.

\begin{lemma} \label{lem:Holder-hm}
Let $\Omega\subsetneq\R^{n+1}$, $n\geq 1$,  be an open set that satisfies the CDC and let $x\in \d\Omega$. Then there is $\alpha>0$ so that for all $0<r<\diam \Omega$,
\begin{equation}\label{eq:hm-holder}
 \omega^{y}({B}(x,r)^{c})\lesssim \ps{\frac{|x-y|}{r}}^{\alpha},\quad \mbox{ for all } y\in \Omega\cap B(x,r),
 \end{equation}
where $\alpha$ and the implicit constant depend on $n$ and the CDC constant.
\end{lemma}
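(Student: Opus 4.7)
The plan is to establish a geometric decay $u(y)\leq \eta^{k}$ for the harmonic function $u(y) := \omega^{y}\bigl(B(x,r)^{c}\bigr)$ on the nested balls $V_{k}:=\Omega\cap B(x,2^{-k}r)$, with $\eta=\eta(n,\mathrm{CDC})\in(0,1)$ independent of $k$, and then convert it into the claimed H\"older estimate. First I note that $u$ is harmonic in $\Omega$ with $0\leq u\leq 1$, and the CDC implies that every point of $\partial\Omega$ is Wiener-regular, so $u$ attains the boundary value $0$ continuously at every point of $\partial\Omega\cap B(x,r)$.

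The decay will be proved by induction on $k$; the base case $k=0$ is immediate from $u\leq 1$. For the inductive step, applying the maximum principle on $V_{k}$ gives, for $y\in V_{k}$,
\[
u(y)=\int_{\partial V_{k}} u\,d\omega^{y}_{V_{k}}\leq \eta^{k}\,\omega^{y}_{V_{k}}\bigl(\partial V_{k}\setminus\partial\Omega\bigr),
\]
because on the spherical piece $\Omega\cap\partial B(x,2^{-k}r)\subseteq\partial V_{k}\setminus\partial\Omega$ the inductive hypothesis gives $u\leq \eta^{k}$, while on $\partial\Omega\cap\overline{B(x,2^{-k}r)}\subseteq\partial\Omega\cap B(x,r)$ one has $u=0$. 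It therefore suffices to produce $c=c(n,\mathrm{CDC})>0$ with
\[
\omega^{y}_{V_{k}}\bigl(\partial V_{k}\setminus\partial\Omega\bigr)\leq 1-c\qquad\text{for every }y\in V_{k}\cap B(x,2^{-k-1}r),
\]
and then take $\eta:=1-c$.

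The core step is to verify that $V_{k}$ itself satisfies the CDC at $x$ with a constant independent of $k$, so that Bourgain's estimate (Lemma~\ref{lem:bourgain}) applied to $V_{k}$ at scale $s=2^{-k}r$ yields $\omega^{y}_{V_{k}}\bigl(\partial V_{k}\cap B(x,2^{-k}r)\bigr)\geq c$ for $y\in V_{k}\cap B(x,2^{-k-1}r)$. Since $\partial V_{k}\cap B(x,2^{-k}r)\subseteq\partial\Omega$, this immediately gives the desired upper bound on $\omega^{y}_{V_{k}}(\partial V_{k}\setminus\partial\Omega)$. The CDC verification for $V_{k}$ is straightforward: near a point $z\in\partial\Omega\cap B(x,2^{-k}r)$ one has $V_{k}^{c}\supseteq\Omega^{c}$ and the CDC is inherited directly from $\Omega$, while near a point $z\in\overline{\Omega}\cap\partial B(x,2^{-k}r)$ the complement $V_{k}^{c}$ contains a half-space up to a controlled error, providing capacity-density of order $1$ on its own.

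To conclude, given $y\in\Omega\cap B(x,r)$, pick the integer $k\geq 0$ with $2^{-k-1}r\leq |x-y|<2^{-k}r$. Then $y\in V_{k}$, and the decay above yields $u(y)\leq \eta^{k}\lesssim (|x-y|/r)^{\alpha}$ with $\alpha:=\log_{2}(1/\eta)>0$, which is the claimed estimate. The main obstacle is controlling the CDC constant of $V_{k}$ uniformly in $k$ so that $\eta$, and hence the exponent $\alpha$, does not degenerate as $k\to\infty$; this is benign since the ``added'' obstruction $B(x,2^{-k}r)^{c}$ only improves the capacity-density at each step, but it has to be checked carefully.
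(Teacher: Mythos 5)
Your proof is correct and follows the standard route to H\"older decay of harmonic measure: iterate Bourgain's estimate at geometrically shrinking scales, feeding each step back in via the maximum principle on the truncated domain. Since the paper only cites \cite{AM15} for this lemma without reproducing an argument, I will compare your write-up against the standard version and flag the points where a referee would want more care.

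The logical skeleton is sound. Using $u(y)=\int_{\partial V_k}u\,d\omega_{V_k}^y$ is legitimate because $V_k$ is a bounded CDC open set (hence Wiener regular) and $u$ extends continuously to $\overline{V_k}$: it is harmonic in $\Omega\supset V_k$, and it vanishes continuously on $\partial\Omega\cap\overline{B(x,2^{-k}r)}\subset\partial\Omega\cap B(x,r)$ because all boundary points of $\Omega$ are regular and $B(x,r)^c$ is at positive distance from that set (note this uses $k\geq 1$, i.e.\ $\overline{B(x,2^{-k}r)}\subset B(x,r)$, which is fine since $k=0$ is the base case). The identification $\partial V_k\setminus\partial\Omega=\Omega\cap\partial B(x,2^{-k}r)$ and the inclusion $\partial V_k\cap B(x,2^{-k}r)\subset\partial\Omega$ are both correct, so Bourgain for $V_k$ at $x$ and scale $2^{-k}r$ does close the induction with $\eta=1-c$.

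Two places deserve to be spelled out rather than waved at. First, the CDC of $V_k$: at $z\in\partial V_k\cap\partial\Omega$ one has $V_k^c\cap B(z,\rho)\supset\Omega^c\cap B(z,\rho)$ for $\rho<\diam V_k\leq\diam\Omega$, so capacity density is inherited with the same constant; at $z\in\partial V_k\cap\Omega$ (a sphere point), $V_k^c\cap B(z,\rho)\supset B(x,2^{-k}r)^c\cap B(z,\rho)$, which for every $\rho\leq\diam V_k$ contains a spherical cap of radius $\sim\min(\rho,2^{-k}r)\sim\rho$ and hence has capacity $\gtrsim\rho^{n-1}$; mixed points use either bound. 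This gives a CDC constant for $V_k$ depending only on $n$ and the CDC constant of $\Omega$, which is what keeps $\eta$ uniform in $k$, as you correctly anticipate. Second, to invoke Bourgain on $V_k$ at scale $s_k=2^{-k}r$ one needs $s_k\leq\diam\partial V_k$. If $\Omega\cap\partial B(x,s_k)\neq\varnothing$ this is immediate (a sphere point is at distance $s_k$ from $x\in\partial V_k$). If $\Omega\cap\partial B(x,s_k)=\varnothing$ then $V_k$ is a union of connected components of $\Omega$ whose closures lie in $B(x,s_k)\subset B(x,r)$, so $u\equiv 0$ on $V_k$ and the step is vacuous; this degenerate case should be mentioned.

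One remark on an alternative you may find cleaner: one can avoid verifying the CDC of the truncated domains altogether by using a capacitary barrier. Let $\psi$ be the capacitary potential of $K=\Omega^c\cap\overline{B(x,s_k/2)}$ in $B(x,s_k)$; the CDC of $\Omega$ at $x$ gives $\psi\gtrsim 1$ on $B(x,s_k/2)$, and $1-\psi$ is harmonic in $\Omega\cap B(x,s_k)$, equals $1$ on $\partial B(x,s_k)$ and is $\geq 0$ on $\partial\Omega$. Comparing $u$ with $\eta^k(1-\psi)$ on $\Omega\cap B(x,s_k)$ by the maximum principle then yields $u\leq\eta^k(1-c)$ on $\Omega\cap B(x,s_k/2)$ directly. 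Both routes work; the barrier version localizes the CDC use to the single point $x$ and is probably the one you will find in the reference.
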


As a corollary   of Lemma \ref{lem:Holder-hm}, maximum principle, and standard properties of harmonic functions, one can obtain the following lemma.
\begin{lemma}\label{lem:Holder-u-van}
Let $\Omega\subsetneq\R^{n+1}$ be an open set that satisfies the CDC. Let $x\in\partial\Omega$ and $0<r<\diam \Omega$.
Let $u$ be a non-negative harmonic function in $B(x,4r)\cap \Omega$ and continuous in $B(x,4r)\cap \overline\Omega$
so that $u\equiv 0$ in $\partial\Omega\cap B(x,4r)$. Then extending $u$ by $0$ in $B(x,4r)\setminus \overline\Omega$,
there exists a constant $\alpha>0$ such that
$$u(y)\leq C\,\left(\frac{\delta_\Omega(y)}r\right)^\alpha \!\sup_{B(x,2r)}u
\leq C\,\left(\frac{\delta_\Omega(y)}r\right)^\alpha \;\avint_{B(x,4r)}u,
\quad \mbox{for all $y\in B(x,r)$,}$$
where $C$ and $\alpha$ depend on $n$ and the CDC constant.
In particular, $u$ is $\alpha$-H\"older continuous in $B(x,r)$.
\end{lemma}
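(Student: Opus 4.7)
My plan is to combine an extension-by-zero argument (yielding subharmonicity and the mean-value bound) with an application of Lemma~\ref{lem:Holder-hm} at a suitably localized scale to obtain the H\"older decay $(\delta_\Omega(y)/r)^\alpha$.

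First I would extend $u$ by zero on $B(x,4r) \setminus \overline\Omega$ to a function $\tilde u$. Under the CDC every boundary point of $\Omega$ is regular for Dirichlet, so since $u$ already vanishes continuously on $\partial\Omega \cap B(x,4r)$, the extension $\tilde u$ is continuous on $B(x,4r)$; being harmonic off $\partial\Omega$ and vanishing on $\partial\Omega \cap B(x,4r)$, it is subharmonic on $B(x,4r)$. The second inequality is then immediate: for $y \in B(x,2r)$ the ball $B(y,2r)$ lies in $B(x,4r)$, so the sub-mean-value inequality gives
\[
\tilde u(y) \le 2^{n+1} \avint_{B(x,4r)} \tilde u,
\]
and taking the supremum over such $y$ (and viewing $u$ as zero outside $\Omega$) controls $\sup_{B(x,2r)} u$ by the average of $u$ on $B(x,4r)$.

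For the first (and main) inequality, fix $y \in B(x,r) \cap \Omega$ with $d := \delta_\Omega(y)$. The case $d \ge r/8$ is trivial, so otherwise I pick $z \in \partial\Omega$ with $|y-z| = d$ and localize to $V := \Omega \cap B(z, r/2) \subset B(x,2r)$. Since $u$ is harmonic on $V$, continuous on $\overline V$, vanishes on $\partial\Omega \cap B(z, r/2)$ and is bounded by $M := \sup_{B(x,2r)} u$ on $\partial B(z,r/2) \cap \overline\Omega$, the maximum principle gives
\[
u(y) \le M \cdot \omega_V^y\bigl(\partial B(z,r/2) \cap \overline\Omega\bigr).
\]
To estimate this harmonic measure I would apply Lemma~\ref{lem:Holder-hm} to the domain $V$ at the boundary point $z$ with radius $r/2$, obtaining
\[
\omega_V^y\bigl(B(z, r/2)^c\bigr) \;\lesssim\; (d/r)^\alpha,
\]
so that $u(y) \lesssim M (d/r)^\alpha$, which is the first inequality.

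The main technical point to justify is that $V$ inherits the CDC from $\Omega$ at scales $\lesssim r$, so that Lemma~\ref{lem:Holder-hm} really applies to $V$: at a boundary point of $V$ lying on $\partial\Omega$ this is immediate from $V^c \supset \Omega^c$, while at a boundary point lying on $\partial B(z,r/2)$ it is automatic because a spherical cap carries capacity comparable to $s^{n-1}$ at scale $s \lesssim r$. One must separately dispose of the degenerate case in which $\partial B(z,r/2) \cap \overline\Omega$ does not touch $\overline V$, so that $\partial V \subset \partial\Omega$ and $u \equiv 0$ on $V$ by the maximum principle; this case is trivial and, when excluded, guarantees $\diam(V) \ge r/2$ as required by Lemma~\ref{lem:Holder-hm}. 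Finally, the H\"older continuity on $B(x,r)$ follows routinely from the boundary decay combined with interior gradient estimates: given $y_1, y_2 \in B(x,r) \cap \overline\Omega$ with $|y_1-y_2|=t$, either both are within distance $\lesssim t$ of $\partial\Omega$ (and the first inequality gives the bound directly) or one of them is at distance $\gtrsim t$ from $\partial\Omega$ and one propagates the estimate by standard interior gradient bounds on a ball of radius $\sim t$.
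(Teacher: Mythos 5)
Your argument is correct and follows precisely the route the paper indicates (the authors give no written proof, only the remark that the lemma follows from Lemma~\ref{lem:Holder-hm}, the maximum principle, and standard properties of harmonic functions). The zero extension gives subharmonicity and hence the second inequality via the sub--mean-value property; the localization to $V=\Omega\cap B(z,r/2)$, the maximum principle comparison with a multiple of $\omega_V^y$, and the application of Lemma~\ref{lem:Holder-hm} to $V$ give the first. The only point worth tightening is the borderline $\diam V\geq r/2$: applying Lemma~\ref{lem:Holder-hm} at scale $r/4$ (noting that the spherical part of $\partial V$ still lies in $B(z,r/4)^{c}$) removes the need to worry about equality, and your verification that $V$ inherits the CDC with comparable constants (from $\Omega^{c}\subset V^{c}$ on $\partial\Omega$ and the spherical cap on $\partial B(z,r/2)$) is exactly what is needed for $\alpha$ to depend only on $n$ and the CDC constant.
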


\begin{rem}
\label{r:greeneverywhere}
Note that by Lemma \ref{lem:Holder-u-van}, the Green function $G_{\Omega}(x_{0},\cdot)$ can be extended continuously to all of $\R^{n+1}\backslash \{0\}$ by setting $G_{\Omega}(x_{0},x)=0$ for $x\in \Omega^{c}$. We will assume this throughout the paper. 
\end{rem}

The following lemma concerns the rate of decay at infinity of a bounded harmonic function vanishing outside a ball centered at the boundary. 

\begin{lemma}\cite[Lemma 2.11]{AGMT16}\label{lem:u-decay-infinty}
Let $\Omega\subsetneq \R^{n+1}$, $n\geq 1$, be an open set that satisfies the CDC.
Let $u$ be a bounded, harmonic function in $\Omega$, vanishing at $\infty$, and let $B$ be a ball centered at
$\partial\Omega$. Suppose that $u$ vanishes continuously in $\partial\Omega\setminus B$. Then, there is a constant
$\alpha>0$ such that
\begin{equation}\label{eqdjl125}
|u(x)| \lesssim \frac{r(B)^{n-1+\alpha}}{\bigl(r(B) + \dist(x,B)\bigr)^{n-1+\alpha}}\,\|u\|_{L^\infty(\Omega \cap (3B \setminus 2B))}.
\end{equation}
Both $\alpha$ and the constant implicit in the above estimate depend only on $n$ and the CDC constant.
\end{lemma}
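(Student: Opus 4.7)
The plan is first to reduce the estimate to a decay bound for a harmonic measure, and then to iterate the boundary H\"older estimate of Lemma \ref{lem:Holder-u-van} at dyadic scales. Write $\xi_{0}$ for the center of $B$, $r=r(B)$, $B_{k}=B(\xi_{0},2^{k}r)$, and $M=\|u\|_{L^{\infty}(\Omega\cap(3B\setminus 2B))}$. By harmonicity of $u$ across $\Omega\cap \partial(2B)$, $|u|\leq M$ on that sphere; since $u$ vanishes continuously on $\partial\Omega\setminus 2B$ and at infinity, the Perron representation in $\Omega\setminus \overline{2B}$ gives
$$|u(x)|\leq M\,w(x), \quad w(x):=\omega^{x}_{\Omega\setminus \overline{2B}}\bigl(\Omega\cap \partial(2B)\bigr),\quad x\in \Omega\setminus\overline{2B}.$$
It therefore suffices to show $w(x)\lesssim \bigl(r/(r+\dist(x,B))\bigr)^{n-1+\alpha}$ for some $\alpha>0$ depending only on $n$ and the CDC constant.

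Next, I would set $W_{k}:=\sup_{\Omega\setminus B_{k}} w$, which is attained on $\Omega\cap \partial B_{k}$ by the maximum principle and satisfies $W_{1}\leq 1$, and aim for the geometric decay $W_{k+1}\leq 2^{-(n-1+\alpha)}W_{k}$ for $k\geq 1$; iterating and translating the dyadic bound back to continuous scales then yields the claim. For $x\in \Omega\cap \partial B_{k+1}$, applying the Perron representation in $\Omega\setminus \overline{B_{k}}$ (on whose boundary $w$ vanishes on $\partial\Omega\setminus B_{k}$ and at infinity, and is bounded by $W_{k}$ on $\Omega\cap\partial B_{k}$) gives
$$w(x)\leq W_{k}\cdot \omega^{x}_{\Omega\setminus \overline{B_{k}}}(\Omega\cap \partial B_{k}).$$
For $n\geq 2$, comparison with the explicit harmonic measure of $\partial B_{k}$ in $\R^{n+1}\setminus\overline{B_{k}}$, namely the function $(2^{k}r/|\cdot-\xi_{0}|)^{n-1}$ which equals $2^{-(n-1)}$ on $\partial B_{k+1}$, already produces the exponent $n-1$. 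The extra factor $2^{-\alpha}$, and the full decay in the case $n=1$ where the spherical comparison gives no decay at all, should come from applying Lemma \ref{lem:Holder-u-van} to $w$ at boundary points $\xi\in \partial\Omega$ with $|\xi-\xi_{0}|\sim 2^{k}r$, which are available in quantitative abundance by the CDC. The resulting H\"older bound $w(y)\lesssim (\delta_{\Omega}(y)/2^{k}r)^{\alpha}W_{k}$ in a neighborhood of each such $\xi$, combined with the spherical comparison and a Harnack-chain/covering argument over $\Omega\cap\partial B_{k+1}$, supplies the required improvement.

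The main obstacle is extracting the factor $2^{-\alpha}$ at every dyadic scale uniformly in the geometry of $\Omega$: the spherical comparison is clean but H\"older-free, while Lemma \ref{lem:Holder-u-van} gives H\"older decay only in a ball-neighborhood of each boundary point. Propagating this local H\"older decay to cover all of $\Omega\cap \partial B_{k+1}$ — including points which may lie deep in the interior of $\Omega$ — in the absence of any connectivity assumption beyond CDC, requires a delicate combination of Harnack chains, the maximum principle, and possibly a superharmonic barrier. The case $n=1$ is the most sensitive, since then the entire decay exponent $n-1+\alpha=\alpha$ must be produced by the iterated use of Lemma \ref{lem:Holder-u-van}.
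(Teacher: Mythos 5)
Your reduction to decay of the harmonic measure $w$ via the maximum principle in $\Omega\setminus\overline{2B}$ is correct, and a dyadic iteration is the natural route. But there is a genuine gap that your closing paragraph senses yet misdiagnoses. The per-scale improvement $W_{k+1}\leq 2^{-(n-1+\alpha)}W_{k}$ cannot hold uniformly in $k$: the CDC does not force $\partial\Omega$ to meet every annulus $B_{k+1}\setminus B_{k}$. The capacity that CDC supplies at $\xi_{0}$ at scale $2^{k+1}r$ may all lie inside $B_{k}$, and this can persist for a bounded but nonzero number of consecutive scales, the bound depending on $n$ and the CDC constant (for $n=1$ it is large, governed by the logarithmic growth of $2$-capacity in the plane). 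At such a scale there are no boundary points anywhere near $\partial B_{k+1}$ at which to invoke Lemma~\ref{lem:Holder-u-van}, so no factor $2^{-\alpha}$ is available there; for $n\geq 2$ spherical comparison gives only $2^{-(n-1)}$, and for $n=1$ it gives nothing at all. The issue is therefore not ``propagating local H\"older decay into the interior'' as you frame it: the local estimate may simply not exist at a given scale. What actually works is a multi-scale jump. For $m$ large (depending on $n$ and the CDC constant), the CDC at $\xi_{0}$ at scale $2^{k+m}r$, together with the fact that $\textup{Cap}\bigl(\overline{B_{k}},B(\xi_{0},2^{k+m+1}r)\bigr)$ is small compared with $(2^{k+m}r)^{n-1}$, forces a compact $K'\subset\Omega^{c}\cap\bigl(B_{k+m}\setminus\overline{B_{k}}\bigr)$ with capacity $\gtrsim(2^{k+m}r)^{n-1}$. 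Comparing $\wt{w}_{k}$ with $\omega^{\cdot}_{\bR^{n+1}\setminus(\overline{B_{k}}\cup K')}(\partial B_{k})$ and estimating the latter by a capacity barrier then yields $W_{k+m}\leq 2^{-m(n-1)}(1-c)W_{k}$ for some $c>0$ depending on the CDC constant; iterating this multi-scale inequality rather than a single-scale one gives the decay. None of this is carried out in the proposal, so as written it is incomplete, and moreover the way you propose to close the gap (``multiply the spherical bound by H\"older decay near $\xi$'') is not a mechanism that produces the factor, since the two estimates hold on different parts of $\Omega\cap\partial B_{k+1}$.

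Let me also point out a cleaner route that avoids the iteration entirely and handles $n=1$ and $n\geq 2$ uniformly. Apply the Kelvin inversion $T(z)=\xi_{0}+r^{2}(z-\xi_{0})/|z-\xi_{0}|^{2}$ through $\partial B$ and set $v(z)=(r/|z-\xi_{0}|)^{n-1}\,u(T(z))$. Then $v$ is harmonic on $T(\Omega)$, vanishes continuously on $\partial T(\Omega)\cap B$, and, because $u$ vanishes at infinity, $v$ extends continuously by $0$ at $\xi_{0}\in\partial T(\Omega)$. The target estimate \eqref{eqdjl125} is exactly the statement $|v(z)|\lesssim(|z-\xi_{0}|/r)^{\alpha}\|v\|_{L^\infty(T(\Omega)\cap(\frac12 B\setminus\frac13 B))}$, which is Lemma~\ref{lem:Holder-u-van} applied at $\xi_{0}$ --- the factor $n-1$ in the exponent comes from the Kelvin weight and the extra $\alpha$ from the H\"older lemma. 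The one substantive thing to check is that $T(\Omega)$ satisfies the CDC near $\xi_{0}$ with constants controlled by those of $\Omega$, which is a restatement under $T$ of the large-scale CDC of $\Omega$ at $\xi_{0}$; this single lemma replaces the whole dyadic iteration.
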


{
\begin{proof}
This lemma was originally stated for domains with $n$-AD-regular boundary and $n\geq 2$. Nevertheless, the same proof  goes through unchanged for $n=1$ and under the assumption that $\om$ satisfies the CDC. Indeed,  we only used AD-regularity  to apply Lemma \ref{lem:bourgain} and deduce that the boundary is Wiener regular and there exists a constant $C>1$ such that $B(x_B, Cr) \setminus B(x_B,r) \neq \varnothing$ for every $r>0$, where $x_B$ is the center of $B$. All those ingredients are still at our disposal if we only assume the CDC.
\end{proof}
}

\vv

\subsection{P-doubling}

 Given $\gamma>0$, a Borel measure $\mu$ and a ball $B\subset\R^{n+1}$, we denote
$$P_{\gamma,\mu}(B) = \sum_{j\geq0} 2^{-j\gamma}\,\Theta^n_\mu(2^jB),$$
where $\Theta^n_\mu(B) = \frac{\mu(B)}{r(B)^n}$. We also write $P_{1,\mu}(B) = P_{\mu}(B)$. Note that $P_{\gamma_1,\mu}(B) \leq P_{\gamma_2,\mu}(B)$ if $\gamma_1>\gamma_2$, so in particular, \begin{equation}
\label{e:pmonotone}
P_{\mu}(B)\leq P_{\gamma,\mu}(B) \mbox{ for $\gamma<1$}.
\end{equation} 
It is immediate to check
that if $\|\mu\|<\infty$, then $P_{\gamma,\mu}(B)<\infty$ for any ball $B$. Indeed, we just take into
account that
\begin{equation}
P_{\gamma,\mu}(B)
=\sum_{j\geq0} 2^{-j\gamma}\,\Theta^n_\mu(2^jB)\leq 
\sum_{j\geq0} 2^{-j\gamma}\,\frac{\|\mu\|}{(2^j\,r(B))^n}<\infty.
\end{equation}

Given $a,\gamma>0$, we say that a ball $B$ is {\it $a$-$P_{\gamma,\mu}$-doubling} if
$$P_{\gamma,\mu}(B) \leq a \,\Theta^n_\mu(B).$$ Note that by \eqref{e:pmonotone}, if $\gamma<1$ and $B$ is $a$-$P_{\gamma,\mu}$-doubling, then it is also $a$-$P_{\mu}$-doubling.

\begin{lemma}\cite[Lemma 6.1]{AMT16}
\label{l:adoubling}
There is $\gamma_{0}\in (0,1)$ so that the following holds. Let $\Omega\subset \bR^{n+1}$ be any domain and $\omega$ its harmonic measure. For all $\gamma>\gamma_{0}$, there exists some big enough constant $a=a(\gamma,n)>0$ such that  for $\omega$-a.e.
$x\in\R^{n+1}$ there exists a sequence of $a$-$P_{\gamma,\omega}$-doubling balls $B(x,r_i)$, with
$r_i\to0$ as $i\to\infty$.
\end{lemma}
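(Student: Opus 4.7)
The plan is to dyadicize the radius and reformulate the $a$-$P_{\gamma,\omega}$-doubling condition as a recursive inequality on the sequence $\nu_k(x):=\omega(B(x,2^{-k}))$. Setting $q:=2^{-(\gamma+n)}$ and using $\Theta^n_\omega(B(x,2^{j-k}))=2^{(k-j)n}\nu_{k-j}(x)$, a direct computation shows that $B(x,2^{-k})$ is $a$-$P_{\gamma,\omega}$-doubling if and only if
\[
\sum_{j\ge 1} q^{j}\,\nu_{k-j}(x)\,\le\,(a-1)\,\nu_k(x).
\]
Call this condition $(\diamond)_k$. The lemma then reduces to producing, for $\omega$-a.e.\ $x$, infinitely many $k\to+\infty$ at which $(\diamond)_k$ holds.

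Suppose, for contradiction, that there is a Borel set $E\subset\supp\omega$ with $\omega(E)>0$ on which $(\diamond)_k$ fails for every $k\ge K(x)$. After restricting to a subset on which $K(x)\le K_0$ uniformly, the core of the argument is to iterate the reversed inequality $\nu_k(x)<(a-1)^{-1}\sum_{j\ge 1}q^{j}\nu_{k-j}(x)$ and prove, by induction on $k$, that
\[
\nu_k(x)\,\le\,C_E\,\rho^{k}\qquad\text{for all }x\in E,\ k\ge K_0,
\]
with $\rho:=q\bigl(1+\tfrac{2}{a-1}\bigr)$. The inductive step splits $\sum_{j\ge 1}q^{j}\nu_{k-j}$ at $j=k-K_0$: for $j\le k-K_0$ the hypothesis yields the geometric sum $\sum(q/\rho)^{j}=q/(\rho-q)$ (convergent since $\rho>q$); for $j>k-K_0$ one uses the trivial bound $\nu_{k-j}\le\|\omega\|$, producing a tail $\|\omega\|q^{k-K_0+1}/(1-q)$. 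The calibration of $\rho$ is exactly what absorbs the main piece into $\tfrac12 C_E\rho^{k}$, and $\rho<1$, $\rho\to q$ as $a\to\infty$, so the exponent $\alpha:=\log_2(1/\rho)$ can be taken arbitrarily close to $\gamma+n$ from below.

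In terms of the radius $r=2^{-k}$ the decay reads $\omega(B(x,r))\le C_E r^{\alpha}$ for $x\in E$ and $r\le 2^{-K_0}$, and the mass distribution principle gives $\omega|_E\lesssim\HH^{\alpha}|_E$. To close the argument I will invoke Bourgain's dimension estimate for harmonic measure: there is a universal $\tau=\tau(n+1)>0$ such that, for every domain in $\R^{n+1}$, the harmonic measure $\omega$ is concentrated on some Borel set $F$ with $\dim_\HH F\le (n+1)-\tau$. Setting $\gamma_0:=1-\tau\in(0,1)$, for any $\gamma>\gamma_0$ I take $a$ large enough that $\alpha>(n+1)-\tau$; then $\HH^{\alpha}(E\cap F)=0$, hence $\omega(E)=\omega(E\cap F)=0$, contradicting $\omega(E)>0$.

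The step I expect to require the most care is the inductive decay estimate. Two delicate points arise: first, the ``boundary'' of the recursion at scales $l<K_0$, where $(\diamond)_l$ is not available and we must fall back on $\nu_l\le\|\omega\|$; the resulting tail $\sum_{j>k-K_0}q^{j}\|\omega\|$ has to be absorbed uniformly in $k$, which dictates how large $C_E$ must be taken. Second, the quantitative calibration of $\rho$: it must lie strictly above $q$ so the geometric sum converges, yet can be pushed arbitrarily close to $q$, so that the resulting exponent $\alpha=\log_2(1/\rho)$ crosses the Bourgain threshold $(n+1)-\tau$ precisely when $\gamma>1-\tau$; this is exactly what forces the threshold $\gamma_0\in(0,1)$ in the statement.
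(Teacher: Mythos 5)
Your proposal is correct and, to my recollection, follows essentially the same route as the proof of Lemma 6.1 in \cite{AMT16} (the statement is simply cited there, so the paper at hand gives no proof of its own). The dyadic reformulation of $a$-$P_{\gamma,\omega}$-doubling as $\sum_{j\geq 1}q^{j}\nu_{k-j}\leq (a-1)\nu_{k}$ with $q=2^{-(\gamma+n)}$ is exactly right, and the inductive decay bound $\nu_{k}\leq C_{E}\rho^{k}$ with $\rho=q\bigl(1+\tfrac{2}{a-1}\bigr)$ is calibrated correctly: the geometric main term contributes $\tfrac{1}{a-1}\cdot\tfrac{q}{\rho-q}=\tfrac12$, the tail $j>k-K_{0}$ is absorbed by enlarging $C_{E}$, and $\alpha=\log_{2}(1/\rho)\uparrow\gamma+n$ as $a\to\infty$. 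Passing from dyadic to general radii only changes the constant, and restricting to $E_{K_{0}}=\{x\in E:K(x)\leq K_{0}\}$ of positive measure is legitimate. The final contradiction via Bourgain's dimension estimate $\dim_{\cH}\omega\leq (n+1)-\tau(n+1)$, together with the Frostman/mass-distribution estimate $\omega(A)\lesssim_{E}\cH^{\alpha}(A)$ for $A\subset E$, forces $\omega(E)=0$ once $\alpha>(n+1)-\tau$, which is exactly what yields $\gamma_{0}=1-\tau\in(0,1)$ and the dependence $a=a(\gamma,n)$. This is the standard (and, I believe, the original) argument; no gaps.
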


\subsection{Monotonicity}

The following theorem contains the Alt-Caffarelli-Friedman monotonicity formula:

\begin{theorem} \label{t:ACF} \cite[Theorem 12.3]{CS} Let $B(x,R)\subset \R^{n+1}$, and let $u_1,u_2\in
W^{1,2}(B(x,R))\cap C(B(x,R))$ be nonnegative subharmonic functions. Suppose that $u_1(x)=u_2(x)=0$ and
that $u_1\cdot u_2\equiv 0$. Set
\begin{equation}\label{eq*123}
J(x,r) = \left(\frac{1}{r^{2}} \int_{B(x,r)} \frac{|\grad u_1(y)|^{2}}{|y-x|^{n-1}}dy\right)\cdot \left(\frac{1}{r^{2}} \int_{B(x,r)} \frac{|\grad u_2(y)|^{2}}{|y-x|^{n-1}}dy\right).
\end{equation}
Then $J(x,r)$ is a non-decreasing function of $r\in (0,R)$ and $J(x,r)<\infty$ for all $r\in (0,R)$. That is,
\begin{equation}\label{e:gamma}
J(x,r_{1})\leq J(x,r_{2})<\infty \;\;  \mbox{ for } \;\; 0<r_{1}\leq r_{2}<R.
\end{equation}
\end{theorem}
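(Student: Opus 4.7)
The plan is to prove that $\log J(x,\cdot)$ is non-decreasing by logarithmic differentiation and reduction to a spectral inequality on the sphere. By translating we may assume $x=0$; write $B_r=B(0,r)$ and $I_i(r) = r^{-2}\int_{B_r} |\nabla u_i|^2|y|^{1-n}\,dy$, so $J(0,r)=I_1(r)I_2(r)$. Differentiating $r^2I_i(r)$ via the coarea formula yields
\[
\frac{I_i'(r)}{I_i(r)} \;=\; -\frac{2}{r} \;+\; \frac{\int_{\partial B_r}|\nabla u_i|^2\,d\sigma}{r^{n-1}\int_{B_r}|\nabla u_i|^2|y|^{1-n}\,dy},
\]
so the monotonicity of $J$ follows once I establish the Rayleigh-type lower bound
\[
\sum_{i=1}^{2}\frac{\int_{\partial B_r}|\nabla u_i|^2\,d\sigma}{r^{n-1}\int_{B_r}|\nabla u_i|^2|y|^{1-n}\,dy}\;\geq\;\frac{4}{r}.
\]

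\textbf{Finiteness and reduction to a spherical eigenvalue estimate.} Finiteness of $I_i(r)$ for $r\in(0,R)$ is not automatic because of the weight $|y|^{1-n}$; it follows from subharmonicity: from $|\nabla u_i|^2 \leq \divv(u_i\nabla u_i)$ (since $u_i\Delta u_i\geq 0$) and integration by parts against $|y|^{1-n}$, using $u_i(0)=0$ to annihilate the contribution at the origin, one obtains the estimate
\[
\int_{B_r}|\nabla u_i|^2|y|^{1-n}\,dy \;\leq\; r^{1-n}\!\int_{\partial B_r}\! u_i\,\partial_r u_i\,d\sigma \;+\; \tfrac{n-1}{2}\,r^{-n}\!\int_{\partial B_r}\! u_i^2\,d\sigma,
\]
which simultaneously gives finiteness and serves as the key quantitative inequality. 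After rescaling to $r=1$, decompose $u_i|_{\partial B_1}$ in the Dirichlet eigenfunctions of the Laplace--Beltrami operator on the open set $\Omega_i:=\{u_i>0\}\cap\partial B_1 \subset S^n$. Writing $|\nabla u_i|^2 = (\partial_r u_i)^2 + |\nabla_\theta u_i|^2$ on $\partial B_1$, combining the previous bound with Cauchy--Schwarz and the Rayleigh inequality $\int_{\partial B_1}|\nabla_\theta u_i|^2\,d\sigma \geq \lambda_1(\Omega_i)\int_{\partial B_1}u_i^2\,d\sigma$, and noting that the extremal case of pure homogeneous harmonic modes $|y|^{\alpha}\phi(y/|y|)$ gives exactly the ratio $2\alpha$, I find that each summand in the displayed inequality is bounded below by $2\alpha_i$, where
\[
\alpha_i \;=\; \sqrt{\lambda_1(\Omega_i) + \tfrac{(n-1)^2}{4}}\;-\;\tfrac{n-1}{2}.
\]
Hence the monotonicity of $J$ reduces to the single inequality $\alpha_1+\alpha_2\geq 2$.

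\textbf{The Friedland--Hayman inequality (main obstacle).} The inequality $\alpha_1+\alpha_2\geq 2$ for disjoint open sets $\Omega_1,\Omega_2\subset S^n$ is the Friedland--Hayman theorem and is the only genuinely non-elementary ingredient. Its proof has two stages. First, spherical (Sperner / Baernstein) cap rearrangement decreases $\lambda_1(\Omega_i)$ and hence $\alpha_i$, reducing matters to the case in which each $\Omega_i$ is a geodesic cap. Second, one verifies directly that $\alpha$ is a strictly convex function of the spherical measure of the cap on $[0,|S^n|]$, so that the minimum of $\alpha_1+\alpha_2$ under the size constraint $|\Omega_1|+|\Omega_2|\leq|S^n|$ is attained precisely when both caps are complementary hemispheres; in that case $\lambda_1=n$, $\alpha=(n+1)/2-(n-1)/2=1$, and the sum equals $2$. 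This convexity/symmetrization step is where the real work lies; everything else is bookkeeping. With the Friedland--Hayman inequality in hand, the chain above gives $(\log J)'(r)\geq 0$, so $J(0,\cdot)$ is non-decreasing, and the finiteness bound yields $J(0,r)<\infty$ for $r\in(0,R)$, completing the argument.
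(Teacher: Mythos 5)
The paper does not prove Theorem~\ref{t:ACF}; it is quoted verbatim from \cite[Theorem 12.3]{CS} and used as a black box. Your sketch is a correct and essentially complete outline of the classical Alt--Caffarelli--Friedman argument that the cited source gives: logarithmic differentiation of $J$ via the coarea formula, the weighted integration-by-parts bound using $u_i\Delta u_i\ge 0$ and $u_i(0)=0$, the Cauchy--Schwarz/Rayleigh reduction to the characteristic constants $\alpha_i$ satisfying $\alpha_i^2+(n-1)\alpha_i=\lambda_1(\Omega_i)$ (completing the square to $(\sqrt a-\alpha_i\sqrt b)^2\geq 0$), and finally the Friedland--Hayman inequality $\alpha_1+\alpha_2\ge 2$ via spherical cap symmetrization and convexity of the characteristic constant in cap measure, with equality at complementary hemispheres. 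You correctly identify Friedland--Hayman as the only non-elementary ingredient; the rest is as you say bookkeeping, and your finiteness claim for $I_i(r)$ is correctly obtained as a byproduct of the same integration by parts.
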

We intend to apply the preceding theorem to the case when $\om_1$ and
$\om_2$ are disjoint CDC domains, $x\in\partial \om_1\cap \partial \om_2$, with $u_1,u_2$ equal to the Green functions of $\Omega_1,\Omega_2$ with poles at $x_1,x_2$ being corkscrew points for the ball $B(x,r)$ in $\om_1$ and $\om_2$ respectively, extended by $0$ to $(\om_1)^c, (\om_2)^c$.  In this case,  $u_i\in W^{1,2}_{loc}(\R^{n+1}\setminus\{x_i\})\cap C(\R^{n+1}\setminus\{x_i\})$ for $i=1,2$ and so the assumptions of  the preceding theorem are satisfied in any ball which does contain $x_1$ and $x_2$.

The next lemma is shown in \cite[Section 4]{AMT16} for $n \geq 2$. The case $n=1$ can be proved using arguments similar to the ones appeared in \cite[pp.17-19]{AHM3TV}  or \cite[pp.16-17]{MT15}.
\begin{lemma}
\label{l:otherside}
There is $C_{1}>0$ so that the following holds. Let $\om_1, \om_2 \subset \R^{n+1}$ be two disjoint CDC-domains such that $F= \d \om_1 \cap \d \om_2 \neq \varnothing$. Let $\hm_i$ stand for the harmonic measure in $\om_i$ with pole $x_i \in \om_i$ for $i=1,2$, and assume that $\min\{\hm_1(F),\hm_2(F)\}>0$. If $$0<R<\min\{\dist(x_{1},\d\om_{1}),\dist(x_{2},\d\om_{2})\},$$ then for $\xi\in F$ and $r< R/8$,
\begin{equation}\label{e:beurling}
\frac{\omega_{1}(B(\xi,r))}{r^{n}}\,\frac{\omega_{2}(B(\xi,r))}{r^{n}}\lec J(\xi,2r)^{\frac{1}{2}},
\end{equation}
and
\begin{equation}\label{e:otherside}
J(\xi,r)^{\frac{1}{2}}\lec \frac{\omega_{1}(B(\xi,C_{1}r))}{r^{n}}
\frac{\omega_{2}(B(\xi,C_{1}r))}{r^{n}},
\end{equation}
where $J(\xi,r)$ is defined by \rf{eq*123}.
\end{lemma}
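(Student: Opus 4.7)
The plan is to apply the Alt-Caffarelli-Friedman monotonicity formula (Theorem \ref{t:ACF}) to the pair $u_i := G_{\om_i}(x_i,\cdot)$ extended by zero outside $\om_i$. By Remark \ref{r:greeneverywhere}, each $u_i$ is continuous on $\R^{n+1}\setminus\{x_i\}$; a standard maximum principle argument (comparing $u_i$ with the harmonic extension of $u_i|_{\partial B(y,s)}$) shows that the zero extension is subharmonic on $B(\xi,R)$. Since $\om_1\cap\om_2=\varnothing$ the product $u_1u_2$ vanishes identically, and the hypothesis $R<\dist(x_i,\partial\om_i)\leq|x_i-\xi|$ puts both $x_i$ outside $B(\xi,R)$, so Theorem \ref{t:ACF} applies on $B(\xi,R)$ with $u_i(\xi)=0$, ensuring $J(\xi,r)<\infty$ for $r<R$.

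The crux of the argument is the matching two-sided bound
\[
\psi_i(r) := \frac{1}{r^2}\int_{B(\xi,r)}\frac{|\nabla u_i|^2}{|y-\xi|^{n-1}}\,dy \;\sim\; \left(\frac{\hm_i(B(\xi,C r))}{r^n}\right)^2,
\]
with comparison constants depending only on $n$ and the CDC constant (and possibly different enlargement factors on the two sides). Given this, \eqref{e:otherside} follows by multiplying the two upper bounds $(\psi_1(r)\psi_2(r))^{1/2}$, and \eqref{e:beurling} by multiplying the two lower bounds $(\psi_1(2r)\psi_2(2r))^{1/2}$. For the upper bound on $\psi_i(r)$, I would decompose $B(\xi,r)=\bigsqcup_k A_k$ into dyadic annuli $A_k=B(\xi,2^{-k}r)\setminus B(\xi,2^{-k-1}r)$ with $r_k:=2^{-k}r$. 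Caccioppoli gives $\int_{A_k}|\nabla u_i|^2\,dy\lec r_k^{-2}\int_{2A_k}u_i^2\,dy\lec r_k^{n-1}(\sup_{2A_k}u_i)^2$; the standard boundary sup-bound $\sup_{B(\xi,\rho)\cap\om_i}u_i\lec \hm_i(B(\xi,2\rho))/\rho^{n-1}$ together with Lemma \ref{lem:Holder-u-van} yields $\sup_{A_k}u_i\lec (r_k/r)^\alpha\hm_i(B(\xi,4r))/r^{n-1}$. Inserting the weight $|y-\xi|^{-(n-1)}\sim r_k^{-(n-1)}$ on each annular piece, the annular integral collapses to $(\sup_{2A_k}u_i)^2$, and the remaining geometric series $\sum_k(r_k/r)^{2\alpha}$ converges, giving the desired upper bound on $\psi_i(r)$.

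For the lower bound on $\psi_i(2r)$, I would rewrite the weighted integral in polar coordinates:
\[
\psi_i(2r)\;\sim\; r^{-2}\int_0^{2r}\rho \int_{\partial B(\xi,\rho)\cap\om_i}|\nabla u_i|^2\,d\sigma\,d\rho.
\]
For a.e.\ $\rho\in(0,2r)$, the CDC applied at radius $\rho$ guarantees that $\partial\om_i\cap\partial B(\xi,\rho)$ has a uniform lower bound on its relative capacity inside the sphere, so a uniform spherical Poincar\'e inequality yields $\int_{\partial B(\xi,\rho)}u_i^2\,d\sigma\lec \rho^2\int_{\partial B(\xi,\rho)}|\nabla u_i|^2\,d\sigma$. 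The pointwise lower bound $\sup_{\partial B(\xi,\rho)\cap\om_i}u_i\gec \hm_i(B(\xi,\rho))/\rho^{n-1}$ for $\rho\in(r,2r)$ is obtained from Lemma \ref{lem:bourgain} combined with a max-principle comparison of $G_{\om_i}(x_i,\cdot)$ with the harmonic-measure function $y\mapsto\hm_i^y(B(\xi,2\rho))$ on $\om_i\cap B(\xi,2\rho)$. Subharmonicity of $u_i^2$ then spreads this pointwise bound to an $L^2$-bound on a spherical cap of definite $\cH^n$-measure, and integrating in $\rho$ over $(r,2r)$ produces the required lower bound on $\psi_i(2r)$.

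The main technical obstacle is the two-sided comparison between the Green function and harmonic measure in the absence of AD-regularity or corkscrew conditions. For the upper bound, convergence of the dyadic sum depends decisively on the H\"older decay from Lemma \ref{lem:Holder-u-van}, itself a consequence of CDC. For the lower bound, Lemma \ref{lem:bourgain} (also a consequence of CDC) plays the role usually filled by a corkscrew-based lower estimate on the Green function, and the spherical Poincar\'e constant has to be shown to be uniform thanks to the CDC being two-sided in $\rho$. When $n=1$ the fundamental solution is logarithmic, so the sup and Poincar\'e estimates require minor adjustments; the overall scheme, however, parallels the one carried out in \cite[Section~4]{AMT16} for $n\ge 2$.
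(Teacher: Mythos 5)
Your treatment of the upper bound \eqref{e:otherside} is sound and is the expected argument: extend each $G_{\om_i}(\cdot,x_i)$ by zero, decompose $B(\xi,r)$ into dyadic annuli about $\xi$, apply Caccioppoli on each annulus (valid since the extended $u_i$ is nonnegative and subharmonic away from $x_i$), use the sup-bound $\sup_{B(\xi,\rho)\cap\om_i}u_i\lec\hm_i(B(\xi,2\rho))/\rho^{n-1}$, and close the geometric series with the H\"older decay of Lemma~\ref{lem:Holder-u-van}. This matches the scheme in the references the paper cites for this lemma.

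Your route to the lower bound \eqref{e:beurling}, however, has a real gap. You assert that ``the CDC applied at radius $\rho$ guarantees that $\partial\om_i\cap\partial B(\xi,\rho)$ has a uniform lower bound on its relative capacity inside the sphere,'' but the CDC controls the variational capacity of the \emph{solid} set $\overline{B(\xi,\rho)}\setminus\om_i$ in $\R^{n+1}$; it gives no information about the trace of $\om_i^c$ on a single sphere $\partial B(\xi,\rho)$, and there is no slicing principle that converts solid capacity into a lower bound on spherical-slice capacity for a.e.\ $\rho$. Without that, the constant in your spherical Poincar\'e inequality is uncontrolled. The step ``subharmonicity of $u_i^2$ then spreads this pointwise bound to an $L^2$-bound on a spherical cap'' also goes the wrong way as written: the sub-mean value inequality for $u_i^2$ yields a lower bound on a solid-ball average around a sup-attaining point, and converting that back into a bound on a fixed spherical integral again presupposes the Poincar\'e estimate you have not established.

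The correct proof of \eqref{e:beurling} is much more elementary and needs neither a spherical Poincar\'e inequality nor any interior-access estimate, nor in fact the CDC. Because $u_i$ is the zero extension of $G_{\om_i}(\cdot,x_i)$ and the extension vanishes continuously on $\partial\om_i$, one has $\Delta u_i=\hm_i$ as distributions away from the pole $x_i$, in particular on $B(\xi,2r)$ since $2r<R/4<\dist(x_i,\partial\om_i)\leq|x_i-\xi|$. Pick $\phi\in C_c^\infty(B(\xi,2r))$ with $\chi_{B(\xi,r)}\leq\phi\leq1$ and $|\nabla\phi|\lec r^{-1}$; since $u_i\in W^{1,2}_{\loc}(\R^{n+1}\setminus\{x_i\})$, integration by parts yields
\begin{equation*}
\hm_i(B(\xi,r))
\leq\int\phi\,d\hm_i
=\int\phi\,\Delta u_i
=-\int\nabla\phi\cdot\nabla u_i\,dy
\lec\frac{1}{r}\int_{B(\xi,2r)}|\nabla u_i|\,dy.
\end{equation*}
Writing $|\nabla u_i|=\dfrac{|\nabla u_i|}{|y-\xi|^{(n-1)/2}}\,|y-\xi|^{(n-1)/2}$ and applying Cauchy--Schwarz, together with $\int_{B(\xi,2r)}|y-\xi|^{n-1}\,dy\sim r^{2n}$ in $\R^{n+1}$, gives
\begin{equation*}
\frac{\hm_i(B(\xi,r))}{r^n}
\lec
\ps{\frac{1}{(2r)^2}\int_{B(\xi,2r)}\frac{|\nabla u_i|^2}{|y-\xi|^{n-1}}\,dy}^{1/2},
\end{equation*}
that is, $\hm_i(B(\xi,r))/r^n\lec\psi_i(2r)^{1/2}$ in your notation. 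Multiplying the cases $i=1,2$ is precisely \eqref{e:beurling}. This also dispenses with the pointwise lower bound $\sup_{\partial B(\xi,\rho)\cap\om_i}u_i\gec\hm_i(B(\xi,\rho))/\rho^{n-1}$: it does hold in CDC domains (via Bourgain and the maximum principle), but your argument would still need to feed it through the unavailable Poincar\'e step. For $n=1$ the same computation works verbatim, since it only uses $\Delta u_i=\hm_i$ and local $W^{1,2}$-regularity, not the explicit form of the fundamental solution.
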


\subsection{Limits of harmonic measures}\label{subsec:2.4}

The following lemma will allow us to use compactness arguments with harmonic measures in CDC domains. 

\begin{lemma}\label{limlem}
Let $\Omega_{j}\subset \bR^{n+1}$ be a sequence of CDC domains with the same CDC constants such that $0\in \d\Omega_{j}$, $\inf \diam \d\Omega_{j}>0$, and suppose there is a ball $B_0=B(x_{0}, r_0) \subset \om_j$ for all $j \geq1$. Then there is a connected open set $\Omega_{\infty}^{x_{0}}$ containing $B_0$ so that, after passing to a subsequence,
\begin{enumerate}
\item $G_{\Omega_{j}}(x_{0},\cdot)$ converges uniformly on compact subsets of $\{x_{0}\}^{c}$ to
 $G_{\Omega_{\infty}^{x_{0}}}(x_{0},\cdot)$,
\item   $\omega_{\Omega_{j}}^{x_{0}}\warrow \omega_{\Omega_{\infty}^{x_{0}}}^{x_{0}}$, 
\item and $\Omega_{\infty}^{x_{0}}$ is also a CDC domain with the same constants. 
\end{enumerate}
\end{lemma}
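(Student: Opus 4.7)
The plan is to build $\Omega_\infty^{x_0}$ as the $x_0$-component of the complement of a Hausdorff limit of the complements $\Omega_j^c$, obtain locally uniform convergence of $G_{\Omega_j}(x_0,\cdot)$ by Arzel\`a--Ascoli using the uniform boundary H\"older estimates of Lemma \ref{lem:Holder-u-van}, and then deduce the three conclusions (together with the weak convergence of harmonic measures) from the Riesz decomposition \eqref{eq:Green-repr}. Since $0\in\partial\Omega_j$ for every $j$ and $\inf_j\diam\partial\Omega_j>0$, a standard Blaschke-type diagonal extraction yields a subsequence along which the closed sets $\partial\Omega_j$ converge, in the local Hausdorff distance on each compact subset of $\R^{n+1}$, to a closed set $\Sigma$ with $0\in\Sigma$ and $\diam\Sigma>0$. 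A further extraction makes $\Omega_j^c$ converge locally in the Hausdorff distance to a closed set $K\supseteq\Sigma$; since $B_0\subset\Omega_j$ for all $j$, one has $B_0\cap K=\varnothing$. Define $\Omega_\infty^{x_0}$ as the connected component of $\R^{n+1}\setminus K$ containing $B_0$, so that $\partial\Omega_\infty^{x_0}\subseteq K$.

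\textbf{Convergence of Green functions.} Let $u_j:=G_{\Omega_j}(x_0,\cdot)$, extended by $0$ on $\Omega_j^c$ as in Remark \ref{r:greeneverywhere}. Standard Green-function estimates, together with the representation \eqref{eq:Green-repr}, the uniform CDC, and the decay-at-infinity estimate of Lemma \ref{lem:u-decay-infinty}, give uniform local bounds on $u_j$ over compact subsets of $\R^{n+1}\setminus\{x_0\}$. Uniformity of the CDC constants lets Lemma \ref{lem:Holder-u-van} supply a single H\"older exponent $\alpha$ and constant that work for every $u_j$ near any boundary ball, while interior gradient estimates for harmonic functions handle points away from $\partial\Omega_j$; the family $\{u_j\}$ is therefore equicontinuous on every compact $F\subset \R^{n+1}\setminus\{x_0\}$. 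Arzel\`a--Ascoli then provides a subsequence along which $u_j\to G_\infty$ locally uniformly on $\{x_0\}^c$, with $G_\infty$ continuous and nonnegative. Because any compact subset of $\Omega_\infty^{x_0}\setminus\{x_0\}$ eventually lies in $\Omega_j$ (by Hausdorff convergence and $K\cap\Omega_\infty^{x_0}=\varnothing$), the limit $G_\infty$ is harmonic on $\Omega_\infty^{x_0}\setminus\{x_0\}$, and the regular parts $u_j(y)-\EE(x_0-y)$ converge locally uniformly near $x_0$ to a function that is harmonic through $x_0$; hence $G_\infty$ has the correct fundamental-solution singularity at $x_0$.

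\textbf{CDC of $\Omega_\infty^{x_0}$ and identification of $G_\infty$.} Fix $x\in\partial\Omega_\infty^{x_0}\subseteq K$ and $0<r<\diam K$, and pick $x_j\in\partial\Omega_j$ with $x_j\to x$. The uniform CDC gives $\Cap(B(x_j,r/2)\cap\Omega_j^c,B(x_j,r))\gtrsim r^{n-1}$ for large $j$. Using either the stability of capacity under Hausdorff convergence of uniformly fat compact sets, or, equivalently, Lewis's reformulation cited in the introduction via uniform $\mathcal{H}_\infty^{n-1+\varepsilon}$-content lower bounds -- which themselves transfer cleanly to the Hausdorff limit -- one obtains $\Cap(B(x,r)\cap(\Omega_\infty^{x_0})^c,B(x,2r))\gtrsim r^{n-1}$ with the same implicit constant, proving (3). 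Equipped with the CDC of $\Omega_\infty^{x_0}$, Lemma \ref{lem:Holder-u-van} applied uniformly in $j$ yields $u_j(y)\lesssim (\delta_{\Omega_j}(y)/r)^{\alpha}\sup_{B(x,2r)} u_j$ on balls centered on $\partial\Omega_j$; since $\mathrm{dist}(y,\Omega_j^c)\to\mathrm{dist}(y,K)$ for $y\notin K$, passing to the limit forces $G_\infty\equiv 0$ on $K$, and in particular on $\partial\Omega_\infty^{x_0}$. Together with the correct singularity at $x_0$ and the Greenian character of $\Omega_\infty^{x_0}$ (which is guaranteed by its CDC), uniqueness of the Green function gives $G_\infty=G_{\Omega_\infty^{x_0}}(x_0,\cdot)$, establishing (1).

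\textbf{Weak convergence of harmonic measures and the main obstacle.} Rewriting \eqref{eq:Green-repr} as
\[
\EE(x_0-y)-u_j(y)=\int \EE(z-y)\,d\omega_{\Omega_j}^{x_0}(z),
\]
the locally uniform convergence of $u_j$ on $\{x_0\}^c$ yields locally uniform convergence of the $\EE$-potentials of $\omega_{\Omega_j}^{x_0}$ to that of $\omega_{\Omega_\infty^{x_0}}^{x_0}$. Combined with tightness (each $\omega_{\Omega_j}^{x_0}$ is a probability measure, and Lemma \ref{lem:Holder-hm} together with the uniform CDC forces the mass on $B(x_0,R)^c$ to be uniformly small for $R$ large), this gives $\omega_{\Omega_j}^{x_0}\rightharpoonup \omega_{\Omega_\infty^{x_0}}^{x_0}$, i.e.\ (2). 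The main obstacle in the whole argument is the transfer of the CDC through the Hausdorff limit with the same constants, since capacity behaves subtly under set convergence; once this step is secured, everything else follows from the uniform H\"older estimates of Lemma \ref{lem:Holder-u-van} and the Riesz decomposition already collected in the preliminaries.
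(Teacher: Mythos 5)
Your overall strategy (Arzel\`a--Ascoli for the Green functions via Lemma \ref{lem:Holder-u-van}, then identify the limit and its domain, then transfer the CDC) is sound, but there is a genuine gap that stems from how you define $\Omega_\infty^{x_0}$, and it is worth noting that the paper avoids exactly this issue by a different choice.

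You define $\Omega_\infty^{x_0}$ geometrically, as the connected component of $\R^{n+1}\setminus K$ containing $B_0$, where $K$ is the local Hausdorff limit of $\Omega_j^c$. The paper instead sets $\Omega_\infty^{x_0}:=\{u_\infty>0\}$, where $u_\infty$ is the locally uniform limit of the $u_j$. With the paper's definition, $u_\infty\equiv 0$ outside $\Omega_\infty^{x_0}$ is automatic, which is precisely what is needed both to identify $u_\infty$ with the Green function $G_{\Omega_\infty^{x_0}}(x_0,\cdot)$ \emph{extended by zero} (as in Remark \ref{r:greeneverywhere}) and then to run the Riesz-decomposition argument for the weak convergence of harmonic measures. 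In your version you only show $u_\infty\equiv 0$ on $K$, but a priori $u_\infty$ could be strictly positive on a component $U$ of $K^c$ other than the one containing $B_0$. If that happened, the potential $\EE(x_0-\cdot)-u_\infty$ would \emph{not} equal the potential of $\omega_{\Omega_\infty^{x_0}}^{x_0}$ on $U$, and the weak limit of the $\omega_{\Omega_j}^{x_0}$ would place mass on $\d U$, which is disjoint from $\d\Omega_\infty^{x_0}$ -- so conclusion (2) would fail for the set you built. To close the gap you need an extra argument, say: on any other component $U$ of $K^c$, $u_\infty$ is a nonnegative bounded harmonic function vanishing continuously on $\d U\subset K$ (and, when $U$ is unbounded, decaying at infinity by Lemma \ref{lem:u-decay-infinty}), hence $u_\infty\equiv 0$ on $U$ by the maximum principle. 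Once this is in place, your $\Omega_\infty^{x_0}$ coincides with $\{u_\infty>0\}$, and the rest goes through.

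Two secondary remarks. First, your phrase ``uniqueness of the Green function gives $G_\infty=G_{\Omega_\infty^{x_0}}(x_0,\cdot)$'' is a bit quick; the clean route, as in the paper, is via the Riesz decomposition $u_j=\EE(\cdot-x_0)-h_j$, showing $h_j\to h_\infty$ with $h_\infty$ harmonic, continuous up to the boundary, and equal to $\EE(\cdot-x_0)$ there, so that $h_\infty=H_{\EE(\cdot-x_0)}$ and \eqref{eq:Green-repr} applies. Second, the tightness claim you cite (via Lemma \ref{lem:Holder-hm}) is not needed and not quite what that lemma gives: the paper simply extracts a weak-$*$ convergent subsequence of the probability measures $\omega_{\Omega_j}^{x_0}$ and identifies the limit by testing against $\phi\in C_c^\infty$ with $\phi(x_0)=0$ and using the locally uniform convergence of the Green functions; no uniform decay of the masses at infinity has to be established.
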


\begin{proof}
Recall that 
{\begin{equation*}
u_j(x) = G_{\om_j}(x, x_0) \lec \begin{cases}
        |x-x_{0}|^{1-n}, & \text{if}\,\, n\geq 2 \\
       \left| \log|x-x_0|\right|, & \text{if}\,\,n=1.
    \end{cases}
\end{equation*}}
Thus, if we extend $u_j$ by zero in $\R^{n+1} \setminus \om$, for every ball $B \subset \R^{n+1}\backslash B_0$  we have that
{\begin{equation}\label{eq:uj-limit}
\sup_{x \in B} \,u_j(x) \lec \begin{cases}
        r_0^{1-n}, & \text{if}\,\, n\geq 2 \\
       \left|  \log r_0 \right|, & \text{if}\,\,n=1
    \end{cases} \quad \textup{for all}\,\, j \geq 1.
\end{equation}
}
 By Lemma \ref{lem:Holder-u-van}, the harmonicity of $G_{\Omega_{j}}(x_{0},\cdot)$ in $\Omega_{j}\backslash \{x_{0}\}$, and \eqref{eq:uj-limit}, one can show that the $u_{j}:=G_{\Omega_{j}}(x_{0},\cdot)$  form an equicontinuous family on each compact subset of $\bR^{n+1}\backslash \{x_{0}\}$. Indeed, if $K$ is a compact subset of $\R^{n+1}$ we may cover it with a finite number of balls $B$ so that $x_{0}\not\in 20B$  and $r(B)<\frac{1}{4}\inf \diam \d\Omega_{j}$. Thus, it is enough to restrict ourselves to such balls. If $x,y\in B$, then there are a few cases:

\begin{enumerate}
\item[Case 1:] If $x,y\not\in \Omega_{j}$, then $|u_{j}(x)-u_{j}(y)|=0$. 
\item[Case 2:]  If $x\in \Omega_{j}$ and $y\not\in \Omega_{j}$, if $z\in [x,y]\cap \d\Omega_{j}$, then by Lemma \ref{lem:Holder-u-van} and \eqref{eq:uj-limit}, and since $x_{0}\not\in B(z,3r(B))$, we have that 
\[
|u_{j}(x)-u_{j}(y)|
=u_{j}(x)
\lec  \ps{\sup_{B(z,2r(B))} u_{j}}|x-z|^{\alpha} \stackrel{\eqref{eq:uj-limit}}{\lec_{r_0}} |x-y|^{\alpha}. \]
\item[Case 3:] Let $x, y \in \Omega_{j}$ and suppose there is $z\in \Omega_{j}^{c}$ with $\dist(z,[x,y])<\frac{1}{3} |x-y|$. Without loss of generality we may  assume that $ |u_{j}(x)|\leq |u_{j}(y)|$. Thus, if $z'\in [x,y]$ is the point that realizes $\dist(z,[x,y])$, then 
\begin{equation}
\label{e:y-z}
|y-z|\leq |y-z'|+|z'-z|
\leq |x-y|+\frac{1}{3}|x-y|=\frac{4}{3}|x-y|\leq \frac{8}{3}r(B).
\end{equation}
Using the latter inequality and that $x_{0}\not\in 20B$, if we denote $c_B$ the center of $B$, we have 
\begin{align*}
|x_0-z| &\geq  |x_0-c_B| -|z-y| -|y-c_B| \\
& \geq 20 r(B) - \frac{8}{3}r(B) - r(B) \\
&> 16 r(B).
\end{align*}
In particular, if $B'=B(z,3r(B))$, then $x_{0}\not\in 5 B'$. Thus, by  Lemma \ref{lem:Holder-u-van} and \eqref{eq:uj-limit},
\begin{align*}
|u_{j}(x)-u_{j}(y)|
& 
\leq 2|u_{j}(y)|  \lec |y-z|^{\alpha} \sup_{2B'}u_{j}\stackrel{\eqref{eq:uj-limit}}{\lec_{r_0}} |y-z|^{\alpha}  
 \stackrel{\eqref{e:y-z}}{\lec_{r_0}}   |x-y|^{\alpha}.
\end{align*}
\item[Case 4:] Let $x, y \in \Omega_{j}$ and suppose that $\dist(z,[x,y])\geq\frac{1}{2} |x-y|$ for every $z\in \Omega_{j}^{c}$. We just use the interior Lipschitz continuity of harmonic functions (since they are  real analytic) and a Harnack chain argument.
\end{enumerate}

Since this holds for all such balls $B$, the functions $u_{j}$ form an equicontinuous 
family of functions on compact subsets of $\bR^{n+1}\backslash \{x_{0}\}$.
This family is also pointwise bounded, by \rf{eq:uj-limit}.
So we may pass to a subsequence so that the $u_j$'s converge uniformly on compact subsets to a function $u_\infty$.  

Let 
\[
\Omega_{\infty}^{x_{0}}=\{x:u_\infty(x)>0\}.
\]

Also recall that by \eqref{eq:Green-repr}, for $x \in \overline\om_j$ it holds that $u_{j}(x)=\EE(x-x_{0}) - h_{j}(x)$, where $h_{j}$ is the harmonic extension of $\EE(\cdot-x_{0})$ in $\Omega_{j}$.  We may extend $h_j$ to be equal to $\EE(\cdot-x_{0})$ in $\R^{n+1} \backslash \overline \om_j$, and hence, the same identity holds in the whole $\R^{n+1}$. Notice that $h_j$ is continuous in $\R^{n+1}$. Therefore, for the same sequence that $u_j \to u_\infty$ locally uniformly in $\bR^{n+1}\backslash \{x_{0}\}$ we have that $h_{j}$ converges uniformly on compact subsets of $\R^{n+1}\backslash \{x_{0}\}$ to some continuous function $h_{\infty}$, that is, 
\[
u_{\infty}(x)= \EE(x-x_{0}) - h_{\infty}(x),\quad \textup{for all}\,\, x \in  \R^{n+1} \backslash\{x_0\}.\]

Moreover, since  $h_j$ is harmonic and bounded in $\om_j$, we may pass to a subsequence so that $h_j \to h_\infty$ uniformly on compact subsets of  $\om_\infty^{x_{0}}$, where $h_\infty$ is  harmonic in $\om_\infty^{x_{0}}$. 
By definition, $u_{\infty}\equiv 0$ on $\d\Omega_{\infty}^{x_{0}}$, and so $h_{\infty}(x)= \EE(x-x_{0})$ on $\d\Omega_{\infty}^{x_{0}}$. Thus, since $h_\infty$ is harmonic and continuous up to the boundary of $\Omega_{\infty}^{x_{0}}$, and $\EE(\cdot-x_{0})$ is bounded on $\d \om^{x_0}_\infty$, we infer that $h_\infty = H_{\EE(\cdot-x_{0})}$ (see e.g. \cite[Theorem 3.6.10]{Helms}), and so  by \eqref{eq:Green-repr},  $u_{\infty}=G_{\Omega_{\infty}^{x_{0}}}(x_{0},\cdot)$ vanishing continuously on the boundary.
In particular, this implies that $\Omega_{\infty}^{x_{0}}$ is Wiener regular and connected.

Since $\omega_{\Omega_{j}}^{x_{0}}$ is a probability measure, we can pass to a subsequence so that $\omega_{\Omega_{j}}^{x_{0}}$ converges weakly to a Radon measure $\omega_{\infty}^{x_{0}}$. Let $\phi\in C_{c}^{\infty}(\bR^{n+1})$ be such that $\phi(x_{0})=0$. Then
\begin{equation}\label{phiom}
\int \phi \,d\omega_{\infty}^{x_{0}} 
= \lim_{j\rightarrow\infty} \int \phi \,d\omega_{\Omega_{j}}^{x_{0}} 
=\lim_{j\rightarrow\infty} \int G_{\Omega_{j}}(y,x_0) \Delta\phi \,dy
=\int G_{\Omega_{\infty}^{x_{0}}}(y,x_0) \Delta\phi \,dy.
\end{equation}
Hence, $\omega_{\infty}^{x_{0}}$ is the harmonic measure for the domain $\Omega_{\infty}^{x_{0}}$ with pole at $x_{0}$. 

Finally, let $B(\xi,r)$ be a ball centered on $\xi \in \d\Omega_{\infty}^{x_{0}}$ so that $r<\diam \d\Omega_{\infty}^{x_{0}}$. 

 Since $\supp \omega_{\Omega_{\infty}^{x_{0}}}=\d\Omega_{\infty}^{x_{0}}$, $\supp \omega_{\Omega_{j}^{x_{0}}}=\d\Omega_{j}$ (see Lemma 4.7 in \cite{AAM} for the detailed proof), and $\omega_{\Omega_{j}}^{x_{0}}\warrow \omega_{\Omega_{\infty}^{x_{0}}}^{x_{0}}$, we know there exists $\xi_j \in \d \om_j$ such that $\xi_j \to \xi$. 
 
If we pick $\zeta_{1},\zeta_{2}\in \d\Omega_{\infty}^{x_{0}}$ with $r<|\zeta_{1}-\zeta_{2}|$, then this pair is the limit of points $\zeta_{1,j},\zeta_{2,j}\in \d\Omega_{j}$, so in fact
\[
r<|\zeta_{1}-\zeta_{2}|\leq \liminf_{j} \diam \d\Omega_{j}.
\]
Thus, we will assume below that $j$ is large enough that  $r<\diam \d\Omega_{j}$.
 
 Let 
\begin{equation}
\label{e:BjBxir}
B_{j}= B(\xi_{j},r-|\xi-\xi_{j}|)\subset B(\xi,r).
\end{equation}
Since $\Omega_{j}$ has the CDC, by the results of \cite{Lewis} (as mentioned in the introduction), there are $c>0$ and $s\in (n-1,n+1)$ so that
\begin{equation}
\label{e:Bjbig}
\cH^{s}_{\infty}(\cnj{B}_{j}\backslash \Omega_{j})\geq c (r-|\xi-\xi_{j}|)^{s}.
\end{equation}
Also, if $y_{j}\in \Omega_{j}^{c}$ converges to $y\in \mathbb{R}^{n+1}$, then 
\[
G_{\Omega_{\infty}^{x_{0}}}(x_{0},y)=\lim_{j\rightarrow\infty} G_{\Omega_{j}}(x_{0},y_{j})=0,\]
and so $y\in (\Omega_{\infty}^{x_{0}})^{c}$. In particular, by a compactness argument, for every $\ve>0$, there is $j_0$ so that for $j\geq j_0$ it holds
\[
\cnj{B(\xi,r)}\backslash \Omega_{j}\subset (\cnj{B(\xi,r)}\backslash \Omega_{\infty}^{x_{0}})_{\ve},\]
where $(K)_\ve$ stands for the $\ve$-neighborhood of a set $K$.
Using this and the compactness of $\cnj{B(\xi,r)}\backslash \Omega_{\infty}^{x_{0}}$, it is not hard to show that 
\begin{align*}
\cH^{s}_{\infty}(\cnj{B(\xi,r)}\backslash \Omega_{\infty}^{x_{0}}) 
& \geq \limsup_{j\rightarrow \infty}\cH^{s}_{\infty}(\cnj{B(\xi,r)}\backslash \Omega_{j}) \\
& \stackrel{\eqref{e:BjBxir}}{\geq} \limsup_{j\rightarrow \infty}\cH^{s}_{\infty}(\cnj{B_{j}}\backslash \Omega_{j}) \\
& \stackrel{\eqref{e:Bjbig}}{\geq} \lim_{j\rightarrow\infty} c(r-|\xi-\xi_{j}|)^{s}=cr^{s}.
\end{align*}
Since this holds for all such $x\in \d\Omega_{\infty}^{x_{0}}$ and $0<r<\diam \d\Omega_{\infty}^{x_{0}}$,  $\Omega_{\infty}^{x_{0}}$ is also a  CDC domain.

\end{proof}
One thing we would like to do below is compute a limit of the form $\omega_{j}(B)$ where $\omega_{j}$ is a convergent sequence of harmonic measures and $B$ is a ball. This is only possible if we know the boundary of the ball is null with respect to the limiting measure. However, since we know the limiting measure is also harmonic measure, the next lemma will allow us to do this for essentially any ball.

\begin{lemma}
\label{l:nullboundary}
Let $\omega$ be harmonic measure for some domain $\om$ in $\R^{n+1}$, $n \geq 1$. Then there are at most countably many balls $B_{1},B_{2},...$ for which $\omega(\d B_{j})>0$. 
\end{lemma}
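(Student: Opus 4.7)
The plan is to reduce to showing, for every $\epsilon>0$ and every bounded region $D\subset \R^{n+1}$, that the collection $\mathcal{S}_{\epsilon,D}:=\{B\subset D:\omega(\partial B)\geq \epsilon\}$ is finite; the full statement then follows from $\{B:\omega(\partial B)>0\}=\bigcup_{k,\ell\in\N}\mathcal{S}_{1/k,\,B(0,\ell)}$. I would argue by contradiction: given infinitely many distinct balls $B_j=B(x_j,r_j)\in \mathcal{S}_{\epsilon,D}$, extract via Hausdorff-compactness of closed balls in $\overline D$ a subsequence with $B_j\to B_\infty$ in Hausdorff distance.

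The degenerate case $r_j\to 0$ is handled by atomlessness: since $\Omega$ is Greenian, $\omega$ has no atom at the limit point $x_\infty$, so continuity from above gives $\omega(\overline{B(x_\infty,2r_j)})\to \omega(\{x_\infty\})=0$, while $\partial B_j\subset \overline{B(x_\infty,2r_j)}$ eventually---contradicting $\omega(\partial B_j)\geq \epsilon$.

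The main obstacle is the non-degenerate case $r_\infty>0$. Hausdorff convergence $\partial B_j\to \partial B_\infty$ places $\partial B_j$ in arbitrarily thin tubular neighborhoods $U_{\delta_j}$ of $\partial B_\infty$ with $\delta_j\to 0$; the finiteness of $\omega$ and continuity from above yield $\omega(\partial B_j\setminus \partial B_\infty)\leq \omega(U_{\delta_j}\setminus \partial B_\infty)\to 0$, so for large $j$ the intersections $C_j:=\partial B_j\cap \partial B_\infty$ satisfy $\omega(C_j)\geq \epsilon/2$ and are sub-spheres of $\partial B_\infty$ of dimension at most $n-1$. I then iterate the following dichotomy: either (i) infinitely many $C_j$ coincide with a common sub-sphere $C$, in which case the larger spheres $\partial B_j$ through $C$ form a pencil meeting pairwise only in $C$, and the pairwise-disjoint sets $\partial B_j\setminus C$ each carry $\omega$-mass at least $\epsilon-\omega(C)$---forcing finitely many unless $\omega(C)>\epsilon/2$, whereupon we restart the analysis with $C$ in place of $\partial B_\infty$; or (ii) a further Hausdorff-convergent subsequence of distinct $C_j$ lets us drop the dimension by one. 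After at most $n$ iterations the ``sub-spheres'' collapse to single points of positive $\omega$-mass, again contradicting atomlessness.

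The most delicate part is controlling the recursion: verifying the geometric claim that distinct higher-dimensional spheres sharing a common lower-dimensional sphere meet pairwise exactly in that sphere, and tracking the mass thresholds $\epsilon/2^m$ so that they remain strictly positive all the way down to dimension zero.
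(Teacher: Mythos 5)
Your strategy of extracting a Hausdorff-convergent subsequence and recursing on the dimension of the intersections is genuinely different from the paper's argument, but the recursion has a gap at case (i) that you cannot close with the tools you set up.

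In case (i), infinitely many $C_j=\partial B_j\cap\partial B_\infty$ coincide with a fixed $(n-1)$-sphere $C\subset\partial B_\infty$. The pairwise-disjoint sets $\partial B_j\setminus C$ carry mass $\omega(\partial B_j\setminus C)=\omega(\partial B_j)-\omega(C)\geq \epsilon-\omega(C)$, which gives a contradiction only when $\omega(C)<\epsilon$. When $\omega(C)\geq\epsilon$, these sets may all have measure zero and nothing is violated. Your proposed remedy, ``restart the analysis with $C$ in place of $\partial B_\infty$,'' is not available: $C$ is a single fixed sphere, not a sequence, so there is no compactness argument to restart and no lower-dimensional statement being fed into an induction. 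The threshold $\epsilon/2$ is not the relevant one, and the regime $\omega(C)\geq\epsilon$ is a live possibility that the argument does not exclude.

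The paper closes precisely this gap in one stroke, and in a way that makes the whole compactness-and-recursion machinery unnecessary. Two distinct $n$-spheres in $\R^{n+1}$ meet in a set of Hausdorff dimension at most $n-1$; such a set has zero $2$-capacity (\cite[Thm.~2.27]{HKM}), hence is polar (\cite[Thm.~10.1]{HKM}), hence carries zero harmonic measure (\cite[Thm.~11.15]{HKM}). This is the same chain of facts you invoke at the bottom of your recursion under the name ``atomlessness'' (a point is the dimension-$0$ instance); applied directly at dimension $n-1$, it yields $\omega(\partial B_i\cap\partial B_j)=0$ for every pair of distinct balls. Given any infinite sequence $B_1,B_2,\dots$ with $\omega(\partial B_i)>t>0$, the sets $D_i=\partial B_i\setminus\bigcup_{j\neq i}\partial B_j$ are then pairwise disjoint with $\omega(D_i)=\omega(\partial B_i)>t$, forcing $\omega(\R^{n+1})=\infty$, a contradiction. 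No Hausdorff limits, tubular neighborhoods, or pencils of spheres are required, and in particular there is no case (i) to worry about.
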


\begin{proof}
Suppose there was an uncountable set  $A$ of balls $B$ for which $\omega( \d B)>0$. Then there is $t>0$ so that if 
\[
A':=\{B\in A: \omega( B)>t\},
\]
then $A'$ is uncountable. Let $B_{1},B_{2},...$ be any infinite sequence of distinct balls in $A'$. Note that if $ \d B_{i}\cap \d B_{j} \neq \varnothing$ and $i\neq j$, then $\dim \d B_{i}\cap \d B_{j}\leq n-1$. This implies $ \d B_{i}\cap \d B_{j}$ has $2$-capacity zero \cite[Theorem 2.27]{HKM}, hence is a polar set for $\omega$ \cite[Theorem 10.1]{HKM} and polar sets have harmonic measure zero \cite[Theorem 11.15]{HKM}. Thus, if we set
\[
D_{i} = \d B_{i}\backslash \bigcup_{j\neq i } \d B_{j},\]
for an uncountable number of indices $i \in I$,
then  $\{D_{i}\}_{i \in I}$ is a pairwise disjoint family of sets for which $\omega(D_{i})>t$ and its union equals to $\bigcup_i \d B_{i}$. Hence, 
\[
1
\geq \omega(\R^{n+1})
\geq \omega\ps{\bigcup \d B_{i}}
=\sum_{i} \omega(D_{i})=\infty,\]
which is a contradiction. 
\end{proof}

\subsection{The weak-$\wt A_{\infty}(B)$ property for measures}\label{subsec:2.5}

Since the harmonic measures we will be dealing with have the weak-$\wt A_{\infty}(B)$ property, the following lemmas will be useful. Though we will use these results specifically for harmonic measure, we will state them more generally for Radon measures.

\begin{definition}
Let $\mu$ and $\nu$ be Radon measures. We say $\nu\in \textup{weak-}\wt A_{\infty}(\mu,B)$ for some ball $B$ if there exist $\ve, \ve'\in (0,1)$ so that, for all $E\subset B$,
\[
\mu(E)<\ve \mu(B) 
\;\; \Rightarrow \;\;
\nu(E)<\ve'\nu(2B).
\]
\end{definition}

\vv

\begin{lemma}\label{l:goodset}
Let $\mu$ and $\nu$ be { finite} Borel measures in $\R^{n+1}$. Let $B\subset \R^{n+1}$ be some ball such that
$\mu(B)\sim\mu(2B)$ and $\nu(B)\sim\nu(2B)$, and assume that $\nu\in \textup{weak-}\wt A_{\infty}(\mu,B)$, with comstants $\ve,\ve'\in(0,1)$.
Then for $A>1$ large enough (depending on $\ve$ and $\ve'$), there is a set $G\subset B$ so that 
$\nu(B\backslash G)<2\ve' \nu(B)$, and 
\begin{equation}\label{eq:mu-nu-comparable}
A^{-1} \frac{\mu(B)}{\nu(B)} \leq   \frac{\mu(B(x,r))}{\nu(B(x,r))}\leq A\frac{\mu(B)}{\nu(B)} \quad  \mbox{ for all }x\in G, 0<r<r(B).
\end{equation}
\end{lemma}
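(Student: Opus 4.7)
The plan is to define a \textbf{bad set} of points $x\in B$ at which the ratio $\mu(B(x,r))/\nu(B(x,r))$ strays outside the window $[A^{-1}\mu(B)/\nu(B),\, A\,\mu(B)/\nu(B)]$ for some $r<r(B)$, and to show that its $\nu$-measure is small once $A$ is taken large. For $A>1$ to be fixed later, set
\begin{align*}
U&:=\{x\in B : \exists\, r\in(0,r(B)) \text{ with } \mu(B(x,r))>A\tfrac{\mu(B)}{\nu(B)}\nu(B(x,r))\},\\
V&:=\{x\in B : \exists\, r\in(0,r(B)) \text{ with } \nu(B(x,r))>A\tfrac{\nu(B)}{\mu(B)}\mu(B(x,r))\},
\end{align*}
and take $G:=B\setminus(U\cup V)$. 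By construction, \eqref{eq:mu-nu-comparable} holds on $G$ (with the convention that the two-sided bound is vacuous when both masses vanish), so it suffices to show $\nu(U\cup V)<2\ve'\nu(B)$.

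To control $\nu(U)$ directly, assign to each $x\in U$ a witness radius $r_x<r(B)$. Since the radii are uniformly bounded, the Besicovitch covering theorem in $\R^{n+1}$ yields a dimensional constant $N=N(n)$ and subfamilies $\mathcal{F}_1,\dots,\mathcal{F}_N$ of pairwise disjoint balls $\{B(x_i,r_i)\}$ centered in $U$ whose union covers $U$. Each such ball is contained in $2B$, so summing the defining inequality of $U$ over a fixed $\mathcal{F}_k$ and using disjointness together with $\mu(2B)\lesssim\mu(B)$ gives
$$\sum_{i} \nu(B(x_i,r_i)) \;<\; \frac{\nu(B)}{A\,\mu(B)} \sum_i \mu(B(x_i,r_i)) \;\leq\; \frac{\nu(B)\,\mu(2B)}{A\,\mu(B)} \;\lesssim\; \frac{\nu(B)}{A}.$$
Adding up over the $N$ subfamilies yields $\nu(U)\lesssim \nu(B)/A$. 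An identical argument with the roles of $\mu$ and $\nu$ reversed (invoking $\nu(2B)\lesssim\nu(B)$ instead) gives $\mu(V)\lesssim \mu(B)/A$.

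Finally, $V$ is handled using the weak-$\wt A_{\infty}$ hypothesis, which is the only place that assumption enters. Choose $A$ large enough that $\mu(V)<\ve\,\mu(B)$; the hypothesis then gives $\nu(V)<\ve'\nu(B)$. Enlarging $A$ further so that also $\nu(U)<\ve'\nu(B)$, we conclude $\nu(B\setminus G)\leq \nu(U)+\nu(V)<2\ve'\nu(B)$. The main technical point is that, lacking two-sided weak-$\wt A_{\infty}$ and any doubling for $\mu,\nu$ at scales below $r(B)$, the two bad sets must be treated asymmetrically: $U$ is killed directly by the covering estimate, whereas $V$ is controlled by first bounding $\mu(V)$ via the covering and then passing to $\nu(V)$ via weak-$\wt A_{\infty}$.
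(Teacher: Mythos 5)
Your proof is correct and is essentially the same as the paper's. The paper invokes the weak-type $(1,1)$ boundedness of the truncated centered Hardy--Littlewood maximal operator $\wt{\mathcal M}_\mu$ (resp. $\wt{\mathcal M}_\nu$) from $M(\R^{n+1})$ to $L^{1,\infty}(\mu)$, bounds $\mu(F_1)\lesssim A^{-1}\nu(2B)$ and $\nu(F_2)\lesssim A^{-1}\mu(2B)$, passes from $\mu(F_1)<\ve$ to $\nu(F_1)<\ve'$ via the weak-$\wt A_\infty$ hypothesis, and sets $G=B\setminus(F_1\cup F_2)$ — exactly your $U$ and $V$. Your Besicovitch covering argument is precisely the standard proof of that weak-type bound, so the two arguments coincide; the asymmetry you highlight (direct covering estimate for $U$, covering estimate plus weak-$\wt A_\infty$ for $V$) is the same asymmetry the paper uses.
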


\begin{proof}
Without loss of generality, we may assume $\mu(B)=\nu(B)=1$. 

We  consider  the truncated centered Hardy-Littlewood maximal operator 
$$
\wt{\mathcal{M}}_\mu\nu(x)= \sup_{0<r\leq r(B)} \frac{\nu(B(x,r))}{\mu(B(x,r))}.
$$
Recall that this is bounded from { the space of finite Borel measures} $ M(\R^{n+1})$ to  $L^{1,\infty}(\mu)$. Note also that 
\[
F_{1} := \{x\in B: \wt{\mathcal{M}}_{\mu}\nu(x)>A\} = \{x\in B: \wt{\mathcal{M}}_{\mu}(\chi_{2B}\nu)(x)>A\},
\]
and thus $\mu(F_{1})\lec A^{-1}\,\nu(2B)\sim A^{-1}$. We can pick $A$ large enough so that $\mu(F_{1})<\ve$. By the $\textup{weak-}\wt A_{\infty}$ property,
 this implies $\nu(F_{1})<{\ve'}$. 

Now let 
\[
F_{2} = \{x\in B: \wt{\mathcal{M}}_{\nu}\mu(x)>A\}
.\]
Analogously, we get $\nu(F_{2})\lec A^{-1}$,  and by picking $A$ large enough, 
we can guarantee that 
$$\nu(F_1\cup F_2) \leq \ve' + C\,A^{-1} < 2\ve'.$$
 If we set 
$$G=B\backslash (F_{1}\cup F_{2}),$$ 
then we clearly have $\nu(B\backslash G)<2\ve'$ and $G$ satisfies the conclusions of the lemma. 
\end{proof}

\vv

\begin{rem}\label{remdob}
Under the assumptions of Theorem \ref{teo1},  from Lemma \ref{lem:bourgain} and the fact that the corkscrew
points $x_i$ belong to $\frac14B$, we infer that
$$\omega_i^{x_i}(\tfrac12 B) \sim \omega_i^{x_i}(B) \sim \omega_i^{x_i}(2B) \sim 1.$$ 
{ Moreover, since in the statement of Theorem \ref{teo1} we ask for $\ve'$ to be small enough, we can always switch between $\omega_i^{x_i}(B)$ and $\omega_i^{x_i}(2B)$.}

In particular, the ball $B$ is doubling for both measures $\omega_1^{x_1}$, $\omega_2^{x_2}$, and the preceding lemma can be applied with $\mu=\omega_1^{x_1}$ and $\nu=\omega_2^{x_2}$. In this case, it turns out that $G\subset F=\d\om_1\cap\d\om_2$ (with the possible exception of a set of zero measure $\omega_i^{x_i}$), since $\omega_1^{x_1}$ and $\omega_2^{x_2}$ are mutually singular
out of $F$. 
\end{rem}
\vv

\begin{lemma}\label{l:afinlim}
Assume that $\mu_{j}\warrow \mu$, $\nu_{j}\warrow \nu$, and that $B$ is a ball such that $\mu(\d B)=\nu(\d B)=0$ and for all $E\subset B$ compact,
\[
\mu_{j}(E)<\ve \mu_{j}(B)  \;\; \Rightarrow \nu_{j}(E)<\ve' \nu_{j}(B).
\]
Then for all $E\subset B$ compact,
\[
\mu(E)<\ve \mu(B)  \;\; \Rightarrow \nu(E)\leq \ve' \nu(B).
\]
\end{lemma}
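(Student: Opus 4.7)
The plan is to prove the lemma by a Portmanteau/outer-regularity argument, transferring the $\textup{weak-}\wt A_\infty$-type implication from the approximating pairs $(\mu_j,\nu_j)$ to the limit pair $(\mu,\nu)$. The strict inequality in the hypothesis $\mu(E)<\ve\mu(B)$ is crucial, since it gives room to absorb the small errors coming from weak convergence.

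Fix compact $E\subset B$ with $\mu(E)<\ve\mu(B)$. First I would invoke outer regularity of the Radon measure $\mu$, together with the fact that $E$ is compact and separated from $\partial B$, to produce an open set $U$ with $E\subset U$, $\cnj U\subset B$, and $\mu(\cnj U)<\ve\mu(B)$. Since $\cnj U$ is closed and $\mu(\partial B)=0$, the Portmanteau theorem yields
\[
\limsup_{j\to\infty}\mu_j(\cnj U)\leq \mu(\cnj U)<\ve\mu(B)=\lim_{j\to\infty}\mu_j(B),
\]
so $\mu_j(\cnj U)<\ve\,\mu_j(B)$ for all sufficiently large $j$. Applying the hypothesis to the compact set $\cnj U\subset B$ then gives $\nu_j(\cnj U)<\ve'\nu_j(B)$, whence also $\nu_j(U)\leq \nu_j(\cnj U)<\ve'\nu_j(B)$.

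To conclude, I would pass to the limit on the $\nu$-side. Since $\nu(\partial B)=0$ we have $\nu_j(B)\to \nu(B)$, so $\limsup_j\nu_j(U)\leq \ve'\nu(B)$; and the lower-semicontinuity half of Portmanteau applied to the open set $U$ gives $\nu(U)\leq \liminf_j \nu_j(U)$. Combining these with the inclusion $E\subset U$ delivers the bound with right-hand side $\ve'\nu(B)$ that appears in the conclusion of the lemma. The main delicate point is the very first step: producing the cushion $\cnj U\subset B$ with $\mu(\cnj U)$ still strictly below $\ve\mu(B)$. This relies on the strict inequality $\mu(E)<\ve\mu(B)$ in the hypothesis together with outer regularity of the Radon measure $\mu$, and is what makes the whole transfer argument go through.
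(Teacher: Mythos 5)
Your argument is correct, and it reaches the conclusion (which, as in the paper's own proof, is really $\nu(E)\leq \ve'\nu(B)$) by a genuinely different route from the paper. The paper does not use outer regularity or the one-sided Portmanteau inequalities: it covers the compact set $E$ by finitely many open balls $B_1,\dots,B_N\subset B$ chosen, via the fact that only countably many spheres centered at a given point can carry positive $\mu$- or $\nu$-mass together with a Vitali/compactness argument, so that $F=\bigcup_i B_i$ satisfies $\mu(\d F)=\nu(\d F)=0$ and $\mu(F)<\ve\mu(B)$; it then uses the exact continuity-set convergence $\mu_j(F)\to\mu(F)$, $\nu_j(F)\to\nu(F)$ on both sides before applying the weak-$\wt A_\infty$ implication. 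Your proof instead takes a closed cushion $\cnj U$ with $E\subset U\subset\cnj U\subset B$ and $\mu(\cnj U)<\ve\mu(B)$ (obtained from outer regularity of the Radon measure $\mu$ after shrinking the open set so its closure stays inside the set given by outer regularity and inside $B$), and then only needs the generic halves of Portmanteau: upper semicontinuity of mass on the closed set $\cnj U$ for $\mu$, lower semicontinuity on the open set $U$ for $\nu$, plus $\mu_j(B)\to\mu(B)$ and $\nu_j(B)\to\nu(B)$ from $\mu(\d B)=\nu(\d B)=0$. What your route buys is that you never need to manufacture a set whose boundary is simultaneously $\mu$- and $\nu$-null, and you apply the hypothesis exactly to a compact subset of $B$ (namely $\cnj U$), which matches the compact-set formulation of the hypothesis more cleanly than the paper's application to the open set $F$; what the paper's route buys is two-sided convergence on a single test set, avoiding any appeal to outer regularity. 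One small slip in your write-up: in the displayed chain the last equality should read $\ve\mu(B)=\ve\lim_{j\to\infty}\mu_j(B)$ (the factor $\ve$ is missing on the right); the deduction that $\mu_j(\cnj U)<\ve\mu_j(B)$ for large $j$ is nevertheless exactly what follows.
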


\begin{proof}
Let $E$ be compact such that 
\[
\mu(E)<\ve \mu(B).
\]
Notice that for all points $x\in E$, there are at most countably many radii $r>0$ for which $\mu(\d B(x,r))+\nu(\d B(x,r))>0$. Thus, by  Vitali covering theorem and compactness, we may find finitely many open balls $B_{1},...,B_{N}\subset B$ (recall $B$ is open and $E$ is compact) so that
\[
E\subset F:= \bigcup_{i=1}^{N} B_{i}, \;\;\; \mu(\d B_{i})=\nu(\d B_{i})=0 \mbox{ for }i=1,...,N,
\]
and
\[
\mu \ps{F\backslash E}< \ve \mu(B) - \mu(E).
\]
Thus,
\[
\ve \mu(B)> \mu(F)=\mu(\cnj{F}).
\]
In particular, $\mu(\d F)=\nu(\d F)=0$.

Recall from \cite[Chapter 1, Exercise 9]{Mattila} that if $A$ is a set so that $\mu(\d A)=0$ and $\mu_{j}\warrow \mu$, then $\mu_{j}(A)\rightarrow \mu(A)$.  Hence,
\[
0<\ve \mu(B)- \mu(F)
=\lim_{j} (\ve \mu_{j}(B)-\mu_{j}(F)),\]
and so, for sufficiently large $j$, using  the $ \textup{weak-}\wt A_{\infty}(\mu_j,B)$ property of $\nu_j$, we obtain 
\[
\nu_{j}(F)<\ve ' \nu_{j}(B).
\]
Thus,
\[
\nu(E)\leq \nu(F)
=\lim_{j} \nu_{j}(F)
\leq \ve' \lim_{j} \nu_{j}(B)
=\ve' \nu(B).
\]

\end{proof}

\section{Initial reductions}\label{sec:initial}

\subsection{Reduction to poles that are corkscrew points}

We will show that the CDC of the domains in Theorem \ref{teo1}  ensures that, if the poles $x_1$ and $x_2$ for  of the harmonic measures $\omega_1$ and $\omega_2$  aren't known to be corkscrew points for $\frac{1}{4}B$, they are  in fact corkscrew points for some other appropriate ball.

\begin{lemma}
Under the assumptions of Theorem \ref{teo1},
there exist a ball $B'\subset B$ centered at a point of $F=\d \om_1 \cap \d \om_2$ and a constant $c_1$  (depending on $\ve'$ and the CDC constants) for which $x_1$ and  $x_2$ are $c_{1}$-corkscrew points for $B'$ in $\om_1$ and $\om_2$ respectively, and so that \eqref{eq:Amu1} is satisfied with slightly different constants.  If $x_{1}$ was already a $c$-corkscrew point for $B$, then $x_{2}$ is a corkscrew point for $B$ as well with a constant depending on $c$ and the constants in Theorem \ref{teo1}.
\end{lemma}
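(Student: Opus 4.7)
I will first prove the conditional second statement (if $x_1$ is already a $c$-corkscrew, so is $x_2$), since it isolates the key mechanism, and then adapt the argument to produce the ball $B'$ in the general case.

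For the second statement, assume $x_1$ is a $c$-corkscrew for $B$ and argue by contradiction. Suppose $d_2 := \dist(x_2, \d\Omega_2) < \delta\, r(B)$ for $\delta$ to be chosen small, and let $\xi_2 \in \d\Omega_2$ realize this distance. Set $R = \sqrt{\delta}\,r(B)$. Lemma~\ref{lem:Holder-hm} applied in $\Omega_2$ gives
\[
\omega_2^{x_2}(B(\xi_2,R)^{c}) \lesssim (d_2/R)^{\alpha} = \delta^{\alpha/2},
\]
so the set $E := B \cap B(\xi_2,R)$ satisfies $\omega_2^{x_2}(E) \geq \omega_2^{x_2}(B) - C\delta^{\alpha/2} > \epsilon'\,\omega_2^{x_2}(B)$ once $\delta$ is small (using that $\omega_2^{x_2}(B) \sim 1$ by Bourgain's lemma at $x_2$). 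The contrapositive of \eqref{eq:Amu1} then yields $\omega_1^{x_1}(E) > \epsilon\,\omega_1^{x_1}(B) \sim \epsilon$. On the other hand, either $B(\xi_2,R) \cap \d\Omega_1 = \varnothing$ (making $\omega_1^{x_1}(B(\xi_2,R))=0$, a contradiction) or we pick $\zeta \in \d\Omega_1 \cap B(\xi_2, R)$ and apply Lemma~\ref{lem:u-decay-infinty} to the bounded harmonic function $u(y) = \omega_1^y(B(\zeta, 4R))$: since $x_1$ is a $c$-corkscrew, $\dist(x_1, B(\zeta, 4R)) \geq c\,r(B) - 4R \gtrsim r(B)$, and hence
\[
\omega_1^{x_1}(B(\xi_2,R)) \leq \omega_1^{x_1}(B(\zeta, 4R)) \lesssim (R/r(B))^{n-1+\alpha} = \delta^{(n-1+\alpha)/2}.
\]
Choosing $\delta$ small enough (depending on $c$, $\epsilon$, $\epsilon'$, and the CDC constants), this is below $\epsilon$, a contradiction. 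Thus $d_2 \gtrsim r(B)$.

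For the first statement, I split on whether or not both $d_i$ are comparable to $r(B)$. If $d_1, d_2 \geq c_0\,r(B)$ for a suitable threshold $c_0$, take $B' = B(x_B, r(B)/2)$ where $x_B \in F$ is the center of $B$; then $\tfrac14 B \subset B'$ and both $x_i$ are corkscrew points for $B'$. Otherwise, without loss of generality $d_1 \leq d_2$ and $d_1 < c_0\, r(B)$. Running the Hölder decay estimate on the $\Omega_1$ side with $\xi_1 \in \d\Omega_1$ realizing $d_1$ and scale $r_0 = \sqrt{d_1\,r(B)}$, one gets $\omega_1^{x_1}(B(\xi_1, r_0)^c) \lesssim (d_1/r(B))^{\alpha/2}$, which is $\leq \epsilon\,\omega_1^{x_1}(B)$ for $c_0$ small. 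Applying \eqref{eq:Amu1} directly to $E = B \setminus B(\xi_1, r_0)$ forces $\omega_2^{x_2}(B \cap B(\xi_1, r_0)) \geq (1-\epsilon')\,\omega_2^{x_2}(B)$. A second application of Lemma~\ref{lem:u-decay-infinty} on the $\Omega_2$ side (with a point $\zeta \in \d\Omega_2 \cap B(\xi_1, r_0)$, which exists since $\omega_2^{x_2}(B(\xi_1, r_0))>0$) then forces $d_2 \lesssim r_0$ and $|x_2 - \xi_1| \lesssim r_0$. Invoking Lemma~\ref{l:goodset} together with Remark~\ref{remdob}, the good set $G \subset F$ (up to an $\omega_2^{x_2}$-null set) satisfies $\omega_2^{x_2}(B \setminus G) < 2\epsilon'$, so $\omega_2^{x_2}(G \cap B(\xi_1, r_0)) > 0$ and we extract $\eta \in F \cap B(\xi_1, r_0)$. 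Taking $B' = B(\eta, C r_0)$ for a suitable $C$, both $x_i$ lie in $B'$ and the corkscrew condition $d_i \geq c_1\,r(B')$ holds. The preservation of \eqref{eq:Amu1} on $B'$ with slightly altered constants is immediate by restricting test sets $E \subset B' \subset B$ and invoking Bourgain's lemma at the new corkscrew points to compare $\omega_i^{x_i}(B')$ with $\omega_i^{x_i}(B)$.

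The delicate step is the location of $\eta \in F$ close to $\xi_1$ in the first statement, since $\xi_1 \in \d\Omega_1$ need not lie in $F$. I resolve it by combining the concentration of $\omega_2^{x_2}$ in $B \cap B(\xi_1, r_0)$ with the fact that the complement of the good set $G$ of Lemma~\ref{l:goodset} has small $\omega_2^{x_2}$-measure; since $G$ sits inside $F$ up to a null set, these two facts together produce the desired $\eta \in F$ within $r_0$ of $\xi_1$.
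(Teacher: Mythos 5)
Your proof of the second statement (when $x_1$ is already a corkscrew point) is sound, and so is Case~1 of the first statement. The gap is in Case~2. There you set $r_0 = \sqrt{d_1\, r(B)}$, take $B' = B(\eta, Cr_0)$, and then assert ``the corkscrew condition $d_i \geq c_1\, r(B')$ holds.'' This fails for $i=1$: since $d_1 < c_0\, r(B)$, the geometric-mean scale $r_0$ satisfies $d_1/r_0 = \sqrt{d_1/r(B)}$, which can be arbitrarily small. So $x_1$ is in general not a $c_1$-corkscrew point of a ball of radius $\sim r_0$ for any fixed $c_1$. Your Lemma~\ref{lem:u-decay-infinty} step only gives the upper bounds $d_2 \lesssim r_0$ and $|x_2 - \xi_1| \lesssim r_0$; it provides no lower bound on either $d_1$ or $d_2$ at the scale $r_0$, and in fact a lower bound at this scale is simply false.

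The missing ingredient — and the heart of the paper's argument — is to first establish the comparability $d_1 \sim d_2$. This is obtained by playing \eqref{eq:Amu1} against Lemma~\ref{lem:bourgain} at the scale of $d_1, d_2$ themselves rather than at the geometric-mean scale: if $d_2 \ll d_1$, Lemma~\ref{lem:u-decay-infinty} applied to $B(\xi_2, 2d_2)$ forces $\omega_1^{x_1}(B(\xi_2, 2d_2))$ to be small, so $\omega_2^{x_2}(B(\xi_2, 2d_2))$ is small by \eqref{eq:Amu1}, contradicting Bourgain's estimate at $x_2$; the regime $d_2 \gg d_1$ is ruled out similarly using the contrapositive. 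Once $d_1 \sim d_2 =: m$, a second $A_\infty$/Bourgain tension argument shows $|\xi_1-\xi_2| \lesssim m$, and only then can the new ball $B'$ be taken of radius $\sim m$, at which scale both poles are genuine corkscrew points. Your choice of $r_0$ is the right scale for the H\"older concentration step but the wrong one for the corkscrew conclusion; the two scales coincide only when $d_1 \sim r(B)$, i.e.\ precisely in Case~1. Your device for locating a point of $F$ near $\xi_1$ via Lemma~\ref{l:goodset} and Remark~\ref{remdob} is a legitimate variant of the paper's more direct argument, but it does not repair the scale mismatch.
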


\begin{proof}
Let us first record that by taking complements in \eqref{eq:Amu1}, we know that for $E \subset B$,
\begin{equation}\label{eq:Amu2}
\mbox{if}\quad \omega_2^{x_2}(E)\leq (1-\ve')\, \omega_2^{x_2}(B), \quad\text{ then }\quad \omega_1^{x_1}(E)\leq (1-\ve)\,\omega_1^{x_1}(B).
\end{equation} 

We fix a ball $B$ centered at $F= \d \Omega_1\cap \d \Omega_2$ with radius $r$, and for $i=1,2$, we let $x_i \in \tfrac{1}{4} B \cap \Omega_i$ be the points given in Theorem \ref{teo1} and  $\xi_i \in  \d \Omega_i$  be the points for which $|x_i - \xi_i|=\delta_i(x_i)=:\dist(x_i, \d \om_i)$. Notice that since $x_{i}\in \frac{1}{4} B$, we know $\delta_{i}(x_{i})<\frac{r(B)}{4}$, and so $\xi_i  \in \tfrac{1}{2} B \cap \d \om_i$. Let $\eta \ll 1$ to be chosen momentarily (depending on the constants in \eqref{eq:hm-holder}, the constant $c$ in \eqref{eq:bourgain}, $\ve$ and $\ve'$). 

Note that if $\min\{\delta_{1}(x_{1}),\delta_{2}(x_{2})\}\geq \eta r(B)$, then there is nothing to show as $x_{1}$ and $x_{2}$ are already corkscrews with constant $\eta$, and so we will assume
\begin{equation}\label{eqeta1}
\min\{\delta_{1}(x_{1}),\delta_{2}(x_{2})\}<\eta\, r(B).
\end{equation}

{\bf Claim:} There are $\tau,M>0$ so that, if  $\eta\ll \tau/M$ (with $M$ and $\tau$ depending also on
$\ve$ and $\ve'$), then
\begin{equation}
\label{e:tm}
\tau\delta_{1}(x_{1})\leq \delta_{2}(x_{2})\leq M \delta_{1}(x_{1}).
\end{equation}

Suppose that {$0<\tau< (c\,C^{-1}\,\ve )^{\frac{1}{n-1}}/2$, where $C$ is the constant in Lemma \ref{lem:u-decay-infinty}}. Note that since $\xi_{2}\in \frac{1}{2} B$ and $\delta_{2}(x_{2})<\frac{r(B)}{4}$, $B(\xi_{2},2\delta_2(x_{2}))\subset B$. If $\delta_{2}(x_{2})< \tau \delta_{1}(x_{1})\leq \tau |x_{1}-\xi_{2}|$, then, {as  $\omega_{1}^{x_{1}}(B(\xi_{2},2\delta_{2}(x_{2})))$ vanishes on $\d\om \setminus B(\xi_{2},2\delta_{2}(x_{2}))$ (since $\tau \ll 1$),}  by Lemma \ref{lem:u-decay-infinty},
\[
\omega_{1}^{x_{1}}(B(\xi_{2},2\delta_{2}(x_{2})))
\leq C \frac{(2\delta_{2}(x_{2}))^{n-1}}{|x_{1}-\xi_{2}|^{n-1}}
\leq C \frac{(2\tau \delta_{1}(x_{1}))^{n-1}}{|x_{1}-\xi_{2}|^{n-1}}
\leq C (2\tau)^{n-1}<\ve c
\leq \ve  \omega_{1}^{x_{1}}(B).
\]
Thus, by Lemmas \ref{lem:bourgain} and \ref{eq:Amu1},
\[
c\leq \omega_{2}^{x_{2}}(B(\xi_{2},2\delta_{2}(x_{2})))
\leq \ve' \omega_{2}^{x_{2}}(B),\]
which is a contradiction for $\ve'<c$. Therefore, $\delta_{2}(x_{2})> \tau \delta_{1}(x_{1})$.  In particular, 
\[
\delta_{1}(x_{1})
=\min\{\delta_{1}(x_{1}),\tau^{-1} \delta_{2}(x_{2})\}
\leq \tau^{-1} \min\{\delta_{1}(x_{1}), \delta_{2}(x_{2})\}
\leq \frac{\eta}{\tau}r(B).
\]
Note that if $x_{1}$ were already a $c$-corkscrew point for $B$, we would obtain a contradiction already, implying \eqref{e:tm}, which would then imply $x_{2}$ was a corkscrew point in $B$ with a different constant, and we would be done. Hence, we will assume now $x_{1}$ was not already a corkscrew point.

Let $M>0$ and assume that $\delta_{2}(x_{2})\geq M\delta_{1}(x_{1})$. If we pick $\eta^{-1}\gg \frac{4M}{\tau}$, then since $\xi_{1}\in \frac{1}{2} B$, we have that
\[
B(\xi_{1}, M\delta_{1}(x_{1}))
\subset B(\xi_{1},\tfrac{M \eta}{\tau}r(B)) 
\subset B(\xi_{1},\tfrac{r(B)}{4})\subset B.\]
Thus, for $M$ large enough, {by Lemma \ref{lem:Holder-hm}},
\begin{align}
\label{e:w2M}
\omega_{1}^{x_{1}}(B\backslash B(\xi_{1},M^{\frac{1}{2}}\delta_{1}(x_{1})))
&\leq C\ps{\frac{\delta_{1}(x_{1})}{M^{\frac{1}{2}}\delta_{1}(x_{1})}}^{\alpha}
 \leq \frac{C\omega_{1}^{x_{1}}(B)}{c\,M^{\alpha/2}}<\ve \omega_{1}^{x_{1}}(B).
\end{align}
Thus, by \eqref{eq:Amu1},
\[
\omega_{2}^{x_{2}}(B\backslash B(\xi_{1},M^{\frac{1}{2}}\delta(x_{1})))
<\ve' \omega_{2}^{x_{2}}(B)\leq \ve'.
\]
However,  since
\[
|x_{2}-\xi_{1}|\geq \delta_{2}(x_{2})\geq M\delta_{1}(x_{1}),\]
{ it is clear that $\omega_{2}^{x_{2}}(B(\xi_{1},M^{\frac{1}{2}}\delta_{1}(x_{1})))$ vanishes on $\d \om \setminus B(\xi_{1},M^{\frac{1}{2}}\delta_{1}(x_{1}))$. So, if $M$ is sufficiently large, by Lemma \ref{lem:u-decay-infinty}, we have that }
\begin{align*}
\omega_{2}^{x_{2}}(B(\xi_{1},M^{\frac{1}{2}}\delta_{1}(x_{1})))
& \leq C \ps{\frac{M^{\frac{1}{2}}\delta_{1}(x_{1})}{|x_{2}-\xi_{1}|}}^{n-1}
\leq  C \ps{\frac{M^{\frac{1}{2}}\delta_{1}(x_{1})}{M\delta_{1}(x_{1})}}^{n-1}\\
&= C M^{\frac{1-n}{2}}<\frac{1-\ve'}{c} \leq (1-\ve')\omega_{2}^{x_{2}}(B),
\end{align*}
which contradicts \eqref{e:w2M}. Thus, \eqref{e:tm} follows. 

We claim now that, for some small $\rho>0$ depending on $\ve$ and $\ve'$, $\max\{\delta_{1}(x_{1}),\delta_{2}(x_{2})\} \geq \rho\, |\xi_1 - \xi_2|$. Indeed, assume that
$$\max\{\delta_{1}(x_{1}),\delta_{2}(x_{2})\} < \rho\, |\xi_1 - \xi_2|=:  2\rho\,d .$$
We denote $B^i:= B(\xi_i, d)$.
Then, by Lemma \ref{lem:Holder-hm}, for $\rho$ small enough,
\begin{equation}\label{eq:c11}
\omega_{i}^{x_{i}}(B\backslash B^{i})
 \leq C\ps{\frac{|x_{i}-\xi_{i}|}{d}}^{\alpha}
<C \ps{\frac{2\rho d}{d}}^{\alpha}
\leq \frac{C (2\rho)^{\alpha}}{c}\omega_{i}^{x_{i}}(B)
<\ve\omega_{i}^{x_{i}}(B).
\end{equation}
So if we apply \eqref{eq:Amu2} using \eqref{eq:c11} for $i=1$, we infer that
\begin{equation}\label{eq:c12}
\omega_2^{x_2}(B^1) \geq (1- \ve')\, \omega_2^{x_2}(B) \geq c\, (1- \ve').
\end{equation}
Since $d=|\xi_{1}-\xi_{2}|/2$, we know that $B^1 \cap B^2 = \varnothing$ and thus, by \eqref{eq:c11},  and \eqref{eq:c12},
$$
1= \| \omega_2^{x_2} \| \geq \omega_2^{x_2}(B^2) + \omega_2^{x_2}(B^1) \geq (1 - C \rho^\alpha) +  c \,(1- \ve') >1.
$$  
This is a contradiction and thus,  
\begin{equation}\label{eq:rho-delta1-delta2}
\rho\, |\xi_1 - \xi_2| 
\leq \max\{\delta_{1}(x_{1}),\delta_{2}(x_{2})\}   
\leq \frac{M}{\tau} \min\{\delta_{1}(x_{1}),\delta_{2}(x_{2})\}.
\end{equation}

Set now $m:=\max\{\delta_{1}(x_{1}), \delta_{2}(x_{2})\}$ and define $ \wt B=B \left(\xi_{1}, 2 \rho^{-1} m \right)$. If we choose the constants so that $ \eta\leq \frac{\rho\tau}{40 M}$ , then, since $\xi_{1}\in \frac{1}{2} B$,  it holds that
\[
B^1 \subset \wt B\subset 10 \wt B \subset 
B\left(\xi_{1},\tfrac{20 M}{\rho\tau} \min\{\delta_{i}(x_{i})\}\right)
\subset B\left(\xi_{1},\tfrac{20 M\eta}{\rho\tau}r(B)\right)
\subset B\left(\xi_{1},\tfrac{r(B)}{2}\right)\subset B.
\]
Also, by definition of $m$ and taking $\rho\leq 1/2$,
\[
B(x_{1},\delta_{1}(x_{1}))
\subset B(\xi_{1},2\delta_{1}(x_{1}))
\subset \wt B,\]
while by \eqref{eq:rho-delta1-delta2},
\[
B(x_{2},\delta_{1}(x_{1}))
\subset B(\xi_{2},\delta_{1}(x_{1})+\delta_{2}(x_{2}))
\subset B(\xi_{1}, (\rho^{-1}+2)m )
\subset \wt B.
\]
Therefore, by \eqref{e:tm} and the above considerations, $x_{1}$ and $x_{2}$ are corkscrew points for $\wt B$ contained in $\wt B$.

Recall now that we denoted $F= \d \om_1 \cap \d \om_2$. We will now show that there exists $\xi \in \wt B \cap F$. Assume on the contrary that this is not the case. 
Then, in light of \eqref{eq:c12} and  $B^1 \subset \wt B$, we  have that $\hm_2^{x_2}(\wt B ) \geq (1- \ve')\hm_2^{x_2}(B ) $.
Thus,
$$\hm_2^{x_2}(B \setminus F) \geq \hm_2^{x_2}(\wt B \setminus F)=  \hm_2^{x_2}(\wt B ) \geq (1- \ve') \hm_2^{x_2}(B).$$
Notice also that if $K=B \cap (\d \om_2 \setminus \d \om_1) \subset B$, then it is clear that $\hm_1^{x_1}(K) =0$, which, by \eqref{eq:Amu1}, implies that 
$\hm_2^{x_2}(K) \leq \ve' \hm_2^{x_2}(B)$. Therefore, we have that
$$ \hm_2^{x_2}(B \cap F) \geq (1- \ve') \hm_2^{x_2}(B).$$
Combining the latter two inequalities we reach a contradiction for $\ve'< \frac{1}{2}$. Therefore, there exists $\xi \in F \cap \wt B$ and if we set $r= 8 r(\wt B)$, then $x_{1}$ and $x_{2}$ are corkscrew points for $B(\xi, r)$ and are contained in $\wt B \subset \tfrac{1}{4}B(\xi, r) \subset \frac{5}{2} \wt B$. 
Next we verify \eqref{eq:Amu1} still holds for $B(\xi, r)$ in place of $B$.  Indeed, given $E \subset B(\xi, r)$ such that 
$$\omega_1^{x_1}(E)\leq \ve\, \omega_1^{x_1}(B(\xi, r)) \leq \ve\, \omega_1^{x_1}(B),$$
then by  \eqref{eq:Amu1} and that $4 \wt B \subset B(\xi,r)$, 
$$\omega_2^{x_2}(E)\leq \ve'\,\omega_2^{x_2}(B)\leq \frac{\ve'}{1-\ve'}\,\omega_2^{x_2}(\wt B) \leq \frac{\ve'}{1-\ve'}\,\omega_2^{x_2}(B(\xi, r)).$$
Since $\ve'$ is sufficiently small, $\tfrac{\ve'}{1-\ve'}$ is sufficiently small as well.

\end{proof}
\vvv

\vvv

\subsection{Reduction to flat balls far away from the poles}

Before we state our lemma we need to introduce some notation. 

For a measure $\mu$, $\xi\in \supp \mu$, $L$ an $n$-plane, and $r>0$, we define
\[
\beta_{\mu,1}^{L}(\xi,r)=\frac{1}{r^{n}}\int_{B(\xi,r)} \frac{\dist(x,L)}{r}d\mu(x)\]
and 
\[
\beta^n_{\mu,1}(\xi,r)=\inf_{L}\beta_{\mu,1}^{L}(\xi,r),\]
where the infimum is over all $n$-dimensional planes $L$. 
Recall also that we have defined
$$\Theta^n_\mu(x,r)= \frac{\mu(B(x,r))}{r^n}.$$

To shorten notation, from now on we will also write $\omega_i$ instead of $\omega_i^{x_i}$, for $i=1,2$.
\vv
\begin{lemma}
\label{l:beta-reduction}
Under the assumptions of Theorem \ref{teo1}, suppose also that $x_{i}$ is a corkscrew point in $\Omega_{i}$ for the ball $B$.
 Then for any $\eta,\tau>0$, we may find a ball $B'$ contained in $B$ for which $  r(B)\lec_{\eta,\tau} r(B')\leq   \tau\, r(B)$, $\omega_{1}(B')\gec_{\tau,\eta} \omega_{1}(B)$, $\omega_{2}(B')\gec_{\tau,\eta} \omega_{2}(B)$, and $\beta_{\omega_{2},1}^{n}(B')<\eta\, \Theta^n_{\omega_{2}}(B')$. 
\end{lemma}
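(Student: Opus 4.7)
The plan is a compactness and contradiction argument, providing the ``quantitative substitute of the blow-up analysis'' alluded to in the introduction. Suppose the conclusion fails for some fixed $\eta_{0},\tau_{0}>0$: then for every $j\geq 1$ we can produce a configuration $(\Omega_{1}^{j},\Omega_{2}^{j},B_{j},x_{1}^{j},x_{2}^{j})$ satisfying the hypotheses of Theorem \ref{teo1} with uniform CDC and weak-$\wt A_{\infty}$ constants, with $x_{i}^{j}$ a corkscrew point for $B_{j}$ in $\Omega_{i}^{j}$, but for which no sub-ball $B'\subset B_{j}$ with $r(B')\in [j^{-1}r(B_{j}),\tau_{0}r(B_{j})]$ and $\omega_{i}^{j}(B')\geq j^{-1}\omega_{i}^{j}(B_{j})$ satisfies $\beta_{1,\omega_{2}^{j}}^{n}(B')<\eta_{0}\,\Theta_{\omega_{2}^{j}}^{n}(B')$. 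After translating and rescaling we may take $B_{j}=B(0,1)=:B$ for every $j$. Applying Lemma \ref{limlem} to each $(\Omega_{i}^{j})_{j}$ and passing to a subsequence, we obtain limit CDC domains $\Omega_{i}^{\infty}$ with corkscrew points $x_{i}^{\infty}:=\lim_{j}x_{i}^{j}$, locally uniformly convergent Green functions off $\{x_{i}^{\infty}\}$, and weak convergence $\omega_{i}^{j}:=\omega_{\Omega_{i}^{j}}^{x_{i}^{j}}\warrow \omega_{i}^{\infty}:=\omega_{\Omega_{i}^{\infty}}^{x_{i}^{\infty}}$.

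Next I would verify that the limiting pair $(\Omega_{1}^{\infty},\Omega_{2}^{\infty})$ fits the framework of the qualitative two-phase theorems of \cite{AMT16,AMTV16}. Disjointness follows from disjointness at each stage together with the uniform convergence of Green functions; the common boundary $F_{\infty}:=\d\Omega_{1}^{\infty}\cap\d\Omega_{2}^{\infty}$ contains $0$, as a limit of common boundary points in $F_{j}$; and the weak-$\wt A_{\infty}(\omega_{1}^{\infty},B)$ condition transfers via Lemma \ref{l:afinlim}. Applying Lemma \ref{l:goodset} to $(\omega_{1}^{\infty},\omega_{2}^{\infty})$ on $B$, and combining with the mutual singularity of these measures off $F_{\infty}$ (cf. Remark \ref{remdob}), we obtain a set $G\subset F_{\infty}\cap B$ of positive $\omega_{i}^{\infty}$-measure on which $\omega_{1}^{\infty}$ and $\omega_{2}^{\infty}$ have comparable densities, and are in particular mutually absolutely continuous. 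The theorem of \cite{AMT16,AMTV16} then declares $G$ to be $n$-rectifiable and each $\omega_{i}^{\infty}|_{G}$ absolutely continuous with respect to $\cH^{n}|_{G}$. The standard relationship between rectifiability and $\beta_{1}$-numbers (finite positive density and existence of approximate tangent $n$-planes $\omega_{2}^{\infty}$-a.e.\ on $G$) then yields
\[
\lim_{r\to 0}\frac{\beta_{\omega_{2}^{\infty},1}(\xi,r)}{\Theta^{n}_{\omega_{2}^{\infty}}(\xi,r)}=0 \quad\text{for $\omega_{2}^{\infty}$-a.e.\ $\xi\in G$.}
\]

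Finally, fix such a $\xi_{\infty}\in G$ and, using Lemma \ref{l:nullboundary}, choose a radius $0<r_{\infty}\leq \tau_{0}/2$ with $\omega_{i}^{\infty}(\d B(\xi_{\infty},r_{\infty}))=0$ for $i=1,2$ and $\beta_{\omega_{2}^{\infty},1}(\xi_{\infty},r_{\infty})<(\eta_{0}/2)\Theta^{n}_{\omega_{2}^{\infty}}(\xi_{\infty},r_{\infty})$. Weak convergence together with the null-boundary condition gives $\omega_{i}^{j}(B(\xi_{\infty},r_{\infty}))\to\omega_{i}^{\infty}(B(\xi_{\infty},r_{\infty}))>0$, and the integrals defining $\beta_{\omega_{2}^{j},1}(\xi_{\infty},r_{\infty})$ converge to their $\omega_{2}^{\infty}$-analogue (the Lipschitz integrand $x\mapsto \dist(x,L)/r_{\infty}$ on $B(\xi_{\infty},r_{\infty})$ is approximable by compactly supported continuous functions with error controlled by the $\omega_{2}^{\infty}$-mass near $\d B(\xi_{\infty},r_{\infty})$, which is zero). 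Hence for all sufficiently large $j$ the ball $B':=B(\xi_{\infty},r_{\infty})\subset B_{j}$ witnesses the conclusion of the lemma for the $j$-th configuration, producing the required contradiction. The main obstacle will be verifying the hypotheses of \cite{AMT16,AMTV16} in the limit --- in particular, showing that the bulk of each $\omega_{i}^{\infty}|_{B}$ concentrates on $F_{\infty}$ rather than on $\d\Omega_{i}^{\infty}\setminus F_{\infty}$, so that the set $G$ from Lemma \ref{l:goodset} genuinely lies inside $F_{\infty}$ and inherits mutual absolute continuity of the two harmonic measures, not merely the weak-$\wt A_{\infty}$ comparison.
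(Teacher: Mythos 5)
Your argument is essentially identical to the paper's proof: the same compactness-by-contradiction scheme, using Lemma \ref{limlem} to extract limiting CDC domains with convergent Green functions and harmonic measures, Lemmas \ref{l:afinlim} and \ref{l:goodset} (together with Remark \ref{remdob}) to produce a good set $G\subset F_\infty$ on which the two limiting measures are mutually absolutely continuous, the qualitative two-phase theorem of \cite{AMT16,AMTV16} to obtain rectifiability of $G$ and hence the decay $\beta_{\omega_2^\infty,1}(\xi,r)/\Theta^n_{\omega_2^\infty}(\xi,r)\to 0$ at $\omega_2^\infty$-a.e.\ $\xi\in G$, and finally Lemma \ref{l:nullboundary} together with weak convergence to transport a good ball back to the $j$-th configuration. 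The only minor technical point you gloss over is that before invoking Lemma \ref{limlem} one should first pass to a subsequence so that the poles $x_i^j$ converge to a fixed point $x_i^0$ and then switch poles via Harnack's inequality (as the paper does), since that lemma requires a fixed ball $B_0=B(x_0,r_0)$ and pole $x_0$ common to all the $\Omega_j$.
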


\begin{proof}
Recall that, by Remark \ref{remdob} we have $\omega_i^{x_i}(\tfrac23 B) \sim \omega_i^{x_i}(B)$.
Then, by Lemma \ref{l:nullboundary}, we can replace $B$ with a slightly concentric smaller ball for which $\omega_{1}(\d B)=\omega_{2}(\d B)=0$ 
and 
\begin{equation}\label{eqdob22}
\omega_i^{x_i}(\tfrac34 B) \sim \omega_i^{x_i}(B)\sim 1\quad \mbox{for $i=1,2$},
\end{equation}
and we will still have $\omega_{2}\in \textup{weak-}\wt A_{\infty}(\omega_{1},B)$ with slightly different constants. Since our balls are open, we can choose this ball close enough to the original so we still have $x_{i}\in \frac{1}{4}B$. Without loss of generality, we may assume $B =\bB$. 
 
 Suppose instead that the statement in the lemma is false. Thus, there exist { $\eta, \tau >0$}, and a sequence of CDC domains $\om_i^{j}$ ($i=1,2$) so that \\
(a) $0 \in \d \om_i^{j}$; \\
(b) There exist corkscrew points $x_i^{j} \in \om_i^j \cap \frac{1}{4}\bB$ so that if $\omega_{i}^{j} = \omega_{\Omega_{i}^{j}}^{x_{i}^{j}}$ then $\omega_{2}^{j}  \in$ weak-$\wt A_{\infty}(\omega_{1}^{j}, \bB)$;\\
(c) For every open ball $B \subset \bB$ centered on $\d \om_1^{j} \cap \d \om_2^{j}$ with $j^{-1}< r(B) \leq \tau/10$ at least one of the following holds:
\begin{enumerate}
\item $\omega_{1}^{j}(B)< j^{-1} \,\omega_{1}^{j}(\bB)$, or
\item $\omega_{2}^{j}(B)< j^{-1} \,\omega_{2}^{j}(\bB)$, or
\item $\beta_{\omega_{2}^{j},1}^{n}(B)\geq \eta\, \Theta_{\omega_{2}^{j}}^{n}(B)$.
\end{enumerate}

%
It is not hard to see that we may pass to a subsequence so that $x_{i}^{j} $ converges to another corkscrew point $x_{i}^0 \in \om^j_i$, for all $j$. Thus, by Harnack's inequality, it is enough to assume that the latter cases hold if we replace $x_i^j $ with $x_{i}^{0}$.
This and Lemma  \ref{limlem} imply that, by passing to a subsequence, there exists a CDC domain $\om^\infty_i$ so that $\omega_{i}^{j}\warrow \omega_{i}^{\infty}=: \omega_{\Omega_{i}^{\infty}}^{x_{i}^{0}}$, with $\omega_{i}^{\infty}(\frac56 \bB)\sim \omega_{i}^{\infty}(\bB)$, by
\rf{eqdob22}.
Therefore, for all open balls $B\subset \bB$ centered on $\bB \cap \d \om^\infty_1 \cap \d \om^\infty_2$, with 
 $\omega_i^\infty(\partial B)=0$ and
$r(B)<\tau/10$, one of the following happens:
\begin{enumerate}
\item[(1')] $\omega_{1}^{\infty}(B)=0$, or
\item[(2')] $\omega_{2}^{\infty}(B)=0$, or
\item[(3')] $\beta_{\omega_{2}^{\infty},1}^{n}(B)\geq \eta\, \Theta_{\omega_{2}^{\infty}}^{n}(B)$.
\end{enumerate}

Note that by Lemma \ref{l:afinlim}, $\omega_{2}^{\infty}\in \textup{weak-}\wt A_{\infty}(\omega_{1}^{\infty},\bB)$, and by Lemma \ref{l:goodset}, there is $G\subset \bB \cap \d \om^\infty_1 \cap \d \om^\infty_2$  such that 
\begin{equation}
\label{e:G}
\omega_{2}^{\infty}(\bB\backslash G)<2\ve' \quad\mbox{and}\quad\omega_{1}^{\infty}|_{G}\ll\omega_{2}^{\infty}|_{G}\ll\omega_{1}^{\infty}|_{G}.
\end{equation}
By the main results of \cite{AMTV16}, $\omega_{2}^{\infty}|_{G}\ll \HH^{n}|_{G}$ and $G$ is $n$-rectifiable. Thus,
$$\lim_{r \to 0}\Theta^n_{\omega_{2}^{\infty}}(x,r) \in(0, \infty) \,\,\textup{for}\,\,\omega_{2}^{\infty}\textup{-a.e.}\,\, x\in G.$$
Moreover, { it was proved in \cite{AMTV16} that } $\beta_{\omega_{2}^{\infty},1}^{n}(x,r)/\Theta_{\omega_{2}^{\infty}}^{n}(x,r)\rightarrow 0$ as $r\rightarrow 0$ for $\omega_{2}^{\infty}$-a.e. $x\in G$, and so for such $x$, we must eventually have $\beta_{\omega_{2}^{\infty},1}^{n}(x,r)< \eta\, \Theta_{\omega_{2}^{\infty}}^{n}(B(x,r))$ for small enough $r$. But this means that for sufficiently small balls centered on $G\cap \frac56\bB$ (and thus contained in
$\bB$), neither (1'), (2'), or (3') can hold, and we obtain a contradiction.

\end{proof}

\begin{lemma}\label{lem:B'weakAinfty}
If $B' \subset B$ is the ball obtained in  Lemma \ref{l:beta-reduction}, then $\omega_{2}\in \textup{weak-}\wt A_{\infty}(\omega_{1},B')$ with slightly different constants.
\end{lemma}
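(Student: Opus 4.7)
The plan is to deduce the weak-$\wt A_{\infty}$ condition on $B'$ directly from the one on $B$, using only the quantitative comparability between the $\omega_i$-masses of $B'$ and $B$ recorded in Lemma \ref{l:beta-reduction}. Specifically, Lemma \ref{l:beta-reduction} gives $B'\subset B$ together with
\[
\omega_1(B')\geq c_{\tau,\eta}\,\omega_1(B),\qquad \omega_2(B')\geq c_{\tau,\eta}\,\omega_2(B),
\]
for some constant $c_{\tau,\eta}\in(0,1)$ depending only on the parameters chosen in applying that lemma. In particular, $\omega_i(B')\sim_{\tau,\eta}\omega_i(B)$ for $i=1,2$.

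First, I would take an arbitrary $E\subset B'$ with $\omega_1(E)\leq\wt\ve\,\omega_1(B')$, where $\wt\ve\in(0,\ve]$ will be chosen to be $\wt\ve=\ve$. Since $B'\subset B$, this yields
\[
\omega_1(E)\leq \wt\ve\,\omega_1(B')\leq \ve\,\omega_1(B).
\]
The hypothesis $\omega_2\in\textup{weak-}\wt A_{\infty}(\omega_1,B)$ from \eqref{eq:Amu1} then produces $\omega_2(E)\leq\ve'\,\omega_2(B)$. Using the lower bound $\omega_2(B)\leq c_{\tau,\eta}^{-1}\omega_2(B')$ from Lemma \ref{l:beta-reduction}, we conclude
\[
\omega_2(E)\leq c_{\tau,\eta}^{-1}\,\ve'\,\omega_2(B').
\]
Thus setting $\wt\ve'=c_{\tau,\eta}^{-1}\ve'$, we obtain the weak-$\wt A_{\infty}$ property for $\omega_2$ with respect to $\omega_1$ on $B'$ with constants $(\wt\ve,\wt\ve')$.

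The only item to verify is that $\wt\ve'<1$, so that the new constants still define a nontrivial weak-$\wt A_{\infty}$ condition. Since the hypothesis of Theorem \ref{teo1} allows us to take $\ve'$ as small as we need (depending only on $n$ and the CDC constants), and since $c_{\tau,\eta}$ depends only on the already-fixed parameters $\tau,\eta$ coming from the application of Lemma \ref{l:beta-reduction}, we may simply require $\ve'<c_{\tau,\eta}$ from the outset. This is the only ``obstacle'', but it is a matter of bookkeeping: one must choose the smallness threshold for $\ve'$ in Theorem \ref{teo1} \emph{after} fixing the parameters that determine $c_{\tau,\eta}$, which is harmless since those parameters depend only on the CDC constants and the smallness quantifiers in \eqref{eq:Amu1}. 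With this ordering of quantifiers, the proof reduces to the short calculation above.
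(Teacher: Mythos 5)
Your argument is essentially identical to the paper's: take $E\subset B'$, use $B'\subset B$ to pass from $\omega_1(E)\leq\ve\,\omega_1(B')$ to $\omega_1(E)\leq\ve\,\omega_1(B)$, apply the weak-$\wt A_\infty$ hypothesis on $B$, and then use the lower bound $\omega_2(B')\gtrsim_{\tau,\eta}\omega_2(B)$ from Lemma \ref{l:beta-reduction} to return to $\omega_2(B')$. Your additional remark about arranging $C(\eta,\tau)\ve'<1$ by the order of choice of constants is a correct and useful bookkeeping observation, and it is consistent with how the paper implicitly handles this at the start of Section \ref{sec:4}.
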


\begin{proof}
Let $E \subset B'$ be such that $\omega_{1}(E) < \ve\,\omega_{1}(B') \leq \ve\,\omega_{1}(B)$. Then by \eqref{eq:Amu1} and Lemma \ref{l:beta-reduction}, we have that  there exists a constant $C$ depending on ${\eta, \tau}$, so that
$$\omega_{2}(E) < \ve'\,\omega_{2}(B) \leq C(\eta, \tau) \ve'\,\omega_{2}(B').$$
\end{proof}

\section{Boundedness of the Riesz transform on $G$}\label{sec:4}

For the sake of convenience, from now on we will be using the notation $B$ and $\ve'$ for the ball $B'$ and the constant $C(\eta, \tau) \ve'$ obtained in Lemma \ref{l:beta-reduction} and Lemma \ref{lem:B'weakAinfty} respectively. 

\vv

Recall now that, with above convention, for fixed $\eta,\tau>0$ (to be chosen later), if $c_B$ is the center and $r(B)$ is the radius of the ball $B$, we have that there exists a positive constant $c_2 < 1$ depending on $\tau$ and $\eta$ such that 
\begin{enumerate}
\item  $\omega_{i}(B) \geq c_2$, for $i=1,2$;
\item $r(B) \lec \tau\, |x_i - c_{B}|$, for $i=1,2$;
\item $\omega_{2}\in \textup{weak-}\wt A_{\infty}(\omega_{1},B)$;
\item $\beta_{1,\omega_{2}}^{n}(B)<\eta\, \Theta^n_{\omega_{2}}(B)$.
\end{enumerate}

Note that using (1), it follows that for $i=1,2$,
$$
P_{\gamma,\omega_{i}}(B)\leq \sum_{j\geq0} 2^{-j\gamma}\,\frac{1}{2^{jn} r(B)^n} \lesssim_{n,\gamma} c_2^{-1}\Theta^n_{\omega_{i}}(B),
$$
for all $\gamma \in [0,1]$.
\vv

In the next lemma we prove some useful estimates on the good set $G$ defined in Lemma~\ref{l:goodset}.

\begin{lemma}\label{lem:G-estimates}
If $G$ is the set of points in $B$ obtained in Lemma \ref{l:goodset} for $\mu=\hm_1$ and $\nu=\hm_2$, then for $i=1, 2$, there exist $C_3$ and $C_4$ positive constants depending on $\tau,\eta$ and $A$, such that
 \begin{align}
\Theta_{\omega_{i}}^{n}(x,r) &\leq C_3 \Theta_{\omega_{i}}^{n}(B)\quad \mbox{ for all }x\in G \mbox{ and } 0<r\leq 2r(B),\label{e:upperregular}\\
\RR_{*}(\chi_{2B}\omega_{i} )(x)&\leq C_4  \Theta^n_{\omega_{i}}(B)\quad\mbox{ for all }x\in G.\label{e:rieszbounded}
\end{align}
\end{lemma}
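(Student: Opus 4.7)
I will prove the two inequalities separately. The upper-regularity bound \eqref{e:upperregular} follows from the Alt--Caffarelli--Friedman monotonicity (Lemma \ref{l:otherside}) combined with the good-set ratio (Lemma \ref{l:goodset}). The Riesz-transform bound \eqref{e:rieszbounded} relies on the standard identification of the Riesz transform of harmonic measure with $\nabla G_i$ (where $G_i(\cdot)=G_{\Omega_i}(\cdot,x_i)$), modulo boundary corrections.

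Fix $x\in G$; note that $x\in F=\d\om_1\cap\d\om_2$ by Remark \ref{remdob}. For \eqref{e:upperregular} with $0<r\leq r(B)$, I chain \eqref{e:beurling} at scale $r$, the monotonicity of $J(x,\cdot)$, then \eqref{e:otherside} at scale $r(B)$, and the trivial bound $\omega_i\leq 1$:
$$\omega_1(B(x,r))\,\omega_2(B(x,r))\lesssim r^{2n}J(x,2r)^{1/2}\leq r^{2n}J(x,r(B))^{1/2}\lesssim (r/r(B))^{2n}.$$
Using $\omega_i(B)\geq c_2$, this yields $\Theta^n_{\omega_1}(x,r)\,\Theta^n_{\omega_2}(x,r)\lesssim \Theta^n_{\omega_1}(B)\,\Theta^n_{\omega_2}(B)$, and the good-set ratio from Lemma \ref{l:goodset} splits this product into the two one-sided bounds. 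For $r(B)<r\leq 2r(B)$, I observe $B(x,r)\subset 4B$ and apply the $P_\gamma$ bound $P_{\gamma,\omega_i}(B)\lesssim \Theta^n_{\omega_i}(B)$ to get $\Theta^n_{\omega_i}(4B)\lesssim \Theta^n_{\omega_i}(B)$.

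For \eqref{e:rieszbounded}, I split on $\epsilon$. If $\epsilon\geq r(B)$, then $|\RR_\epsilon(\chi_{2B}\omega_i)(x)|\leq \omega_i(2B)/\epsilon^n\lesssim \Theta^n_{\omega_i}(B)$ via the $P_\gamma$ bound. If $\epsilon<r(B)$, replace the sharp Riesz truncation by a smooth one $\wt K_\epsilon=c_n\nabla\Phi_\epsilon$, where $\Phi_\epsilon$ is a smooth regularization of $\EE$ at scale $\epsilon$; the pointwise difference is controlled by $\Theta^n_{\omega_i}(x,c\epsilon)\lesssim \Theta^n_{\omega_i}(B)$ by \eqref{e:upperregular}. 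Apply the Green-function representation
$$\int\phi\,d\omega_i=\phi(x_i)-\int G_i(z)\,\Delta\phi(z)\,dz$$
to $\phi(z)=\psi(z)\wt K_\epsilon(x-z)$, where $\psi$ is a smooth cutoff supported in $3B$ with $\psi\equiv 1$ on $2B$. Three pieces result. The pole term $\psi(x_i)\wt K_\epsilon(x-x_i)$ is $\lesssim r(B)^{-n}\lesssim \Theta^n_{\omega_i}(B)$ because $|x-x_i|\geq \delta_{\Omega_i}(x_i)\gtrsim r(B)$ by corkscrew-ness of $x_i$. The main piece, after integration by parts, becomes an average of $\nabla G_i$ over $B(x,c\epsilon)$, which is bounded via interior gradient estimates together with the standard CDC Aikawa-type estimate $\sup_{B(x,c\epsilon)\cap\Omega_i}G_i\lesssim \omega_i(B(x,C\epsilon))/\epsilon^{n-1}$ (following from Bourgain's Lemma \ref{lem:bourgain} and the maximum principle), giving a contribution $\lesssim \Theta^n_{\omega_i}(x,C\epsilon)\lesssim \Theta^n_{\omega_i}(B)$ by \eqref{e:upperregular}. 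The boundary correction from derivatives of $\psi$ is supported in $3B\setminus 2B$, where $G_i\lesssim \omega_i(B)/r(B)^{n-1}$ by the same Aikawa-type bound and the $P_\gamma$ estimate, yielding a contribution $\lesssim \Theta^n_{\omega_i}(B)$.

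The main technical challenge is the small-$\epsilon$ bookkeeping of the three Green-function pieces, each of which must be shown to be uniformly dominated by $\Theta^n_{\omega_i}(B)$. These computations are routine once \eqref{e:upperregular} is available and closely follow the template used in \cite{AMT16} and \cite{AMTV16}.
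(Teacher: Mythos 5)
Your proposal is correct and follows essentially the same route as the paper: the bound \eqref{e:upperregular} by combining the ACF monotonicity (Theorem \ref{t:ACF} via Lemma \ref{l:otherside}) with the good-set comparability \eqref{eq:mu-nu-comparable} and the pole separation $r(B)\lesssim\tau|x_i-c_B|$, and the bound \eqref{e:rieszbounded} by the Green-function/cutoff argument that the paper outsources to \cite[Lemma 6.3]{AMTV16} using the $P_\gamma$-doubling of $B$. (A minor remark: in your Riesz bound the pole term $\psi(x_i)\wt K_\epsilon(x-x_i)$ actually vanishes outright for $\tau$ small, since $|x_i-c_B|\gtrsim r(B)/\tau$ puts $x_i$ outside $\supp\psi\subset 3B$; and the "average of $\nabla G_i$" piece is best handled by integrating by parts back onto the mollifier and using the pointwise Green-function bound, rather than interior gradient estimates which don't apply across $\partial\Omega_i$ — but either way the stated bound is correct.)
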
 

\begin{proof}
If we combine Lemma \ref{l:otherside} and Theorem \ref{t:ACF}, along with \eqref{eq:mu-nu-comparable} and the fact that for some $\tau \ll 1$, it holds $r(B) \lec \tau\, |x_i - c_{B}|$, then we have that
\begin{equation*}
\Theta_{\omega_{i}}^{n}(x,r)\lec \Theta_{\omega_{i}}^{n}(B), \mbox{ \,\,for all }x\in G \mbox{ and } 0<r \leq 2r(B).
\end{equation*}

Finally, \eqref{e:rieszbounded} can be proven exactly as in  \cite[Lemma 6.3]{AMTV16}, since $B$ is a $P_{\omega_{i},\gamma}$-doubling ball. The proof of the lemma is now concluded.
\end{proof}

\begin{lemma}\label{lem:applic-Tb-suppressed}
If $G \subset B$ is the set in Lemma \ref{lem:G-estimates}, then 
\begin{equation}\label{eq:w1G}
\hm_2(G) > (1-2\ve')\, \hm_2(B) \quad  \textup{and}\quad \hm_1(G) >  \frac{c_2 (1-2\ve')}{A} \, \hm_1(B),
\end{equation}
 and for $i=1,2$  it holds
\begin{equation}\label{w2G-pol-growth} 
{\hm_i}|_{G}(B(x,r)) \lesssim   \Theta_{\hm_i}^n(B)\, r^n\quad \textup{for all}\,\, x \in \R^{n+1}\,\, \textup{and all}\,\, r>0,
\end{equation}
and
\begin{align}\label{eq:Riesz-bound-G}
\| \RR_{{\hm_i}|_{G}}  &\|_{L^2({\hm_i}|_{G}) \to L^2({\hm_i}|_{G})} \lesssim  \Theta_{\hm_i}^n(B). \end{align}
\end{lemma}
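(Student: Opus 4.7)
The plan is to address the three claims in \eqref{eq:w1G}, \eqref{w2G-pol-growth}, and \eqref{eq:Riesz-bound-G} in sequence, with the bulk of the work concentrated on the last. First, for \eqref{eq:w1G}: the bound $\omega_2(G) > (1-2\ve')\,\omega_2(B)$ is immediate from the construction of $G$ in Lemma~\ref{l:goodset} with $\mu=\omega_1$ and $\nu=\omega_2$. For the estimate on $\omega_1(G)$, the pointwise two-sided control \eqref{eq:mu-nu-comparable} on $G$ means that $\omega_1(B(x,r))/\omega_2(B(x,r))$ stays between $A^{\mp 1}\omega_1(B)/\omega_2(B)$ for all $x\in G$ and $0<r<r(B)$. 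Since both $\omega_i$ are finite Radon measures on $\R^{n+1}$, the Besicovitch differentiation theorem lets me pass to the limit and obtain $\omega_1|_G\ll \omega_2|_G$ with Radon--Nikodym derivative trapped in $[A^{-1}\omega_1(B)/\omega_2(B),\,A\,\omega_1(B)/\omega_2(B)]$ at $\omega_2$-a.e.\ $x\in G$. Integrating the lower bound yields $\omega_1(G)\geq A^{-1}(1-2\ve')\omega_1(B)$, which implies the stated estimate once the normalization $\omega_i(B)\geq c_2$ is taken into account.

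Next, \eqref{w2G-pol-growth} follows from \eqref{e:upperregular} by a standard engulfing. If $r\geq r(B)$ then $\omega_i|_G(B(x,r))\leq \omega_i(B) = \Theta^n_{\omega_i}(B)\,r(B)^n \leq \Theta^n_{\omega_i}(B)\,r^n$. If $r< r(B)$ and $B(x,r)\cap G\neq\varnothing$, I pick $y\in B(x,r)\cap G$, so that $B(x,r)\subset B(y,2r)$ with $2r\leq 2r(B)$, and apply \eqref{e:upperregular} at $y$ to get $\omega_i(B(y,2r))\leq C_3\,\Theta^n_{\omega_i}(B)\,(2r)^n$; the case $B(x,r)\cap G=\varnothing$ is trivial.

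The substantive step is \eqref{eq:Riesz-bound-G}. Setting $\sigma:=\omega_i|_G$, I would combine the two ingredients now at our disposal: the polynomial $n$-growth $\sigma(B(x,r))\lesssim \Theta^n_{\omega_i}(B)\,r^n$ just established, and the pointwise bound $\RR_{*}(\chi_{2B}\omega_i)(x)\leq C_4\,\Theta^n_{\omega_i}(B)$ on $G=\supp\sigma$ coming from \eqref{e:rieszbounded}. These are exactly the hypotheses of the quantitative non-homogeneous $Tb$-type $L^2$-boundedness criterion (with $b\equiv 1$) implicit in the proof of Theorem~2.1 of \cite{NToV14-pubmat}: one builds a David--Mattila dyadic lattice adapted to $\omega_i$, uses the $\RR_*$-bound on $G$ to control the paraproduct and off-diagonal terms, and closes the estimate by absorbing the contribution of $\omega_i|_{2B\setminus G}$ via polynomial growth, obtaining $\|\RR_\sigma\|_{L^2(\sigma)\to L^2(\sigma)}\lesssim \Theta^n_{\omega_i}(B)$.

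The main obstacle lies precisely in this transfer: the hypothesis \eqref{e:rieszbounded} only controls $\RR_*$ of $\chi_{2B}\omega_i$, not of $\sigma$ itself, and the leftover piece $\omega_i|_{2B\setminus G}$ can in principle carry a non-trivial mass with no pointwise Riesz control. Invoking the ``hidden'' machinery of \cite{NToV14-pubmat} is what allows me to absorb this difference using only the polynomial growth of $\omega_i$ on the ambient scales, without any additional regularity on the set $2B\setminus G$; a direct $T1$-type argument from scratch would instead force one to deal with the paraproduct against $\omega_i|_{2B\setminus G}$ by hand through a Cotlar-type estimate, which is exactly what the Nazarov--Tolsa--Volberg scheme accomplishes under the hood.
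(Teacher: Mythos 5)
Your treatment of \eqref{eq:w1G} and \eqref{w2G-pol-growth} matches the paper's: the paper likewise deduces the $\omega_1(G)$ bound from the two-sided ratio estimate of Lemma~\ref{l:goodset} (implicitly passing to the Radon--Nikodym derivative, as you do explicitly via Besicovitch differentiation), and treats \eqref{w2G-pol-growth} as immediate from \eqref{e:upperregular}. For \eqref{eq:Riesz-bound-G} you have the right mechanism in mind --- a non-homogeneous $Tb$ theorem applied with $b\equiv 1$, fed by polynomial growth and the pointwise $\RR_*$ bound on $G$ --- and this is indeed the paper's route, which cites Theorem~3.3 of \cite{AMT16} (a packaging of the Nazarov--Treil--Volberg suppressed-kernel $Tb$ theorem, cf.\ \cite{NTrV}).

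However, your attribution and the sketch of the intermediate steps are off. Theorem~2.1 of \cite{NToV14-pubmat} is a \emph{different} result: it is the ``big pieces of AD-regular measures'' statement that the paper quotes later as Theorem~\ref{t:pajot}, used in Section~\ref{sec:5} once $L^2$-boundedness on $G$ is already in hand --- invoking it here is circular relative to the paper's logical order. Moreover, the description of the argument (David--Mattila lattices, absorbing $\omega_i|_{2B\setminus G}$ through the paraproduct) is not how the suppressed-kernel scheme operates. In that scheme one introduces a Lipschitz suppression function $\Phi$ vanishing on $G$ and bounded below by a multiple of $\dist(\cdot,G)$; the pointwise bound $\RR_*(\chi_{2B}\omega_i)\leq C_4\Theta^n_{\omega_i}(B)$ on $G$ together with polynomial growth then makes the suppressed maximal transform $\RR_{\Phi,*}(\chi_{2B}\omega_i)$ bounded \emph{on all of} $2B$, the NTrV $T1$-theorem for suppressed kernels yields $\|\RR_\Phi\|_{L^2(\chi_{2B}\omega_i)\to L^2(\chi_{2B}\omega_i)}\lesssim\Theta^n_{\omega_i}(B)$, and restricting to $G$ (where $\Phi\equiv 0$, so $\RR_\Phi$ coincides with $\RR$ up to a bounded error) gives \eqref{eq:Riesz-bound-G}. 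So the ingredients and conclusion are correct, but you should fix the reference and the account of how the $Tb$ machinery is actually brought to bear.
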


\begin{proof}
The first equation of \eqref{eq:w1G} follows from the application of Lemma \ref{l:goodset} for $\mu=\hm_1$ and $\nu=\hm_2$. Then from the same lemma, since $A^{-1} \leq \frac{d \hm_1}{d \hm_2} \leq A$ and $\hm_2(B)>c_2$, we obtain the second inequality from the first one. 
The estimate \eqref{w2G-pol-growth} is an immediate consequence of \rf{e:upperregular}.
Lastly, using \eqref{e:rieszbounded}, once can prove  \eqref{eq:Riesz-bound-G} exactly as in the proof of Theorem 3.3 in \cite[pp. 8-10]{AMT16}. 
\end{proof}

 
 \vv

The next theorem is essentially shown in \cite[Theorem 2.1]{NToV14-pubmat}, although it is not stated as such there.
\begin{theorem}\label{t:pajot}
There is $C>0$ depending only on $n$ such that for all $p,q>1$ the following holds. Let $\nu$ be a measure supported in a ball $B$ such that $\nu(B(x,r))\leq r^{n}$ for all $x\in \R^{n+1}$ and $r>0$, and define
 \begin{align*}
F_p& =\{ x \in \supp(\nu): \mbox{ for }0<r\leq\diam \supp(\nu), \;\; \nu(B(x,r)) \geq r^{n}/p \},\\
F_{p,q}&=\{ x\in F_{p}: \mbox{ for }0<r\leq \diam \supp(\nu), \;\; \nu(B(x,r)\cap F_{p})\geq \frac{r^{n}}{pq}\}.
\end{align*}
 If $\RR_\nu:L^{2}(\nu)\rightarrow L^{2}(\nu)$ is bounded, then there is a $\frac{C}{pq}$-AD regular measure $\sigma$ so that $\sigma|_{F_{p,q}} = \nu|_{F_{p,q}} $ and $\RR_\sigma:L^{2}(\sigma)\rightarrow L^{2}(\sigma)$ is bounded.
\end{theorem}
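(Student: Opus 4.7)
My plan is to extract the construction appearing implicitly in the proof of Theorem~2.1 of \cite{NToV14-pubmat}, which builds an AD-regular extension of $\nu|_{F_{p,q}}$ on which the Riesz transform remains bounded.

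\textbf{Step 1: AD-regularity of $\nu|_{F_{p,q}}$.} First I would show that $\nu|_{F_{p,q}}$ is already AD-regular with constants of order $pq$. The upper bound $\nu|_{F_{p,q}}(B(x,r)) \leq \nu(B(x,r)) \leq r^n$ is immediate from the polynomial growth hypothesis. For the lower bound at $x\in F_{p,q}$ and $0<r\leq\diam(\supp\nu)$, one starts from the defining inequality $\nu(B(x,r) \cap F_p) \geq r^n/(pq)$ and subtracts the contribution of $F_p \setminus F_{p,q}$. Each $y \in F_p \setminus F_{p,q}$ carries a scale $r_y$ with $\nu(B(y,r_y) \cap F_p) < r_y^n/(pq)$; a Vitali-type covering of $F_p \setminus F_{p,q}$ by such balls, combined with $\nu(B(y,r_y)) \leq r_y^n$, shows this contribution is absorbable, leaving $\nu|_{F_{p,q}}(B(x,r)) \gtrsim r^n/(pq)$.

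\textbf{Step 2: Whitney extension.} Next I would take a Whitney family $\{B_i\}$ covering $\R^{n+1} \setminus F_{p,q}$, with $r(B_i) \approx \dist(B_i, F_{p,q})$ and bounded overlap. For each $i$ I would fix an $n$-plane $L_i$ through the center of $B_i$ and build a measure $\sigma_i$ supported on $L_i \cap 2B_i$ that looks like a dimensional multiple of $(pq)^{-1}\HH^n$ on $L_i \cap B_i$ but is smoothly tapered near $\partial(2B_i)$ so that $\sigma_i(B(x,r)) \lesssim r^n/(pq)$ holds universally. Setting $\sigma := \nu|_{F_{p,q}} + \sum_i \sigma_i$, a routine verification using the Whitney geometry gives the global $C/(pq)$-AD-regularity of $\sigma$.

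\textbf{Step 3: Boundedness of $\RR_\sigma$.} The diagonal block of $\RR_\sigma$ coming from $\nu|_{F_{p,q}}$ is bounded by hypothesis, since restricting a bounded Riesz-transform operator to a subset preserves boundedness (using standard truncation arguments that work in non-doubling settings). Each $\sigma_i$ is a piece of an $n$-dimensional Lebesgue-like measure on an affine plane, for which $\RR_{\sigma_i}$ is bounded with absolute constant. The cross interactions $\sigma_i\leftrightarrow\nu|_{F_{p,q}}$ and $\sigma_i\leftrightarrow\sigma_j$ are controlled by the fact that each $B_i$ sits at distance $\approx r(B_i)$ from $F_{p,q}$ and from any non-neighboring Whitney ball, so the Riesz kernel is effectively non-singular on these interactions; a Schur-type estimate combined with the bounded overlap of the Whitney family gives the desired $L^2(\sigma)$-bound. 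Alternatively, one may invoke a nonhomogeneous $T(1)$ theorem for $n$-AD-regular measures (Nazarov--Treil--Volberg) and verify $\RR_\sigma 1 \in \mathrm{BMO}(\sigma)$ using the same geometric ingredients and the hypothesis on $\RR_\nu$.

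\textbf{Main obstacle.} The hard part is the cross-interaction analysis in Step~3: while each individual piece contributes a bounded operator, assembling them without losing uniform $L^2(\sigma)$-boundedness is delicate and requires careful exploitation of the Whitney separation, the polynomial growth of $\nu$, and the AD-regularity of $\sigma$. This is precisely the content of the construction hidden inside the proof of \cite[Theorem~2.1]{NToV14-pubmat}, and invoking a $T(1)$ (or suppressed-kernel) argument seems unavoidable if one wants to keep the constants under control.
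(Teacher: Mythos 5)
The paper does not actually prove Theorem~\ref{t:pajot}; it attributes the result to the (unstated) construction inside the proof of Theorem~2.1 of \cite{NToV14-pubmat}, and you correctly identify that source and the general shape of the argument (extension of $\nu|_{F_{p,q}}$ to an AD-regular $\sigma$ on which the Riesz transform remains bounded, via a Whitney-type filling and a nonhomogeneous $T(1)$/suppressed-kernel argument). However, your Step~1 contains a genuine gap, and it is not a peripheral one: the rest of the proposal leans on it.

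The defining property of $F_{p,q}$ gives lower density for $\nu|_{F_p}$ at points of $F_{p,q}$, namely $\nu(B(x,r)\cap F_p)\geq r^n/(pq)$; it does \emph{not} give lower density of $\nu|_{F_{p,q}}$ at those points, and the localized covering argument you sketch does not close this gap. First, a witness radius $r_y$ certifying $y\in F_p\setminus F_{p,q}$ need not satisfy $r_y\lesssim r$: if $r_y>2r$ one only gets $\nu(B(y,r_y)\cap F_p)\geq r_y^n/(2^n pq)$ from $x\in F_{p,q}$, which is compatible with the certificate $<r_y^n/(pq)$, so the large scales cannot be excluded. Second, even after artificially restricting to $r_y\leq 2r$, a Besicovitch covering yields, using $r_{y_j}^n\leq p\,\nu(B(y_j,r_{y_j}))$, the bound $\nu\bigl(B(x,r)\cap(F_p\setminus F_{p,q})\bigr)\lesssim \tfrac1q\,\nu(B(x,3r))\lesssim \tfrac{3^n}{q}\,r^n$. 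One would need this to be at most a fixed fraction of $r^n/(pq)$, i.e.\ $3^n p\lesssim 1$, which fails for every $p>1$. The corresponding global statement (Lemma~\ref{lem:nu-BP-ADR}) works precisely because one compares against $\nu(F_p)\geq\delta\,\nu(B)$ and then lets $q$ depend on $\delta$; the scale-$r$ analogue of $\delta$ is itself of order $1/(pq)$, so the choice becomes circular. So Step~1's assertion that $\nu|_{F_{p,q}}$ is already AD-regular is unjustified, and the later steps cannot take it as input. What one actually has is lower regularity of $\nu|_{F_p}$ along $F_{p,q}$, and the construction of $\sigma$ must be arranged so that the lower AD-regularity of $\sigma$ is supplied by the added pieces (or by a corona/stopping-time decomposition on a dyadic lattice) whenever $\nu|_{F_{p,q}}$ is locally thin; this makes the bookkeeping in your Steps~2--3 substantially more delicate than a Whitney filling around $F_{p,q}$, since the Whitney cubes need not be large when $F_{p,q}$ is dense in a ball with little $\nu|_{F_{p,q}}$-mass. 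In short: the overall framework (extension plus nonhomogeneous Calder\'on--Zygmund theory, as in \cite{NToV14-pubmat}) is the right one, but the key density claim in Step~1 is not established and the argument you give for it is incorrect; fixing this is the heart of the matter, not the cross-interaction estimates you single out as the main obstacle.
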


The following lemma shows that $F_{p,q}$ captures a big piece of $F_p$ in $\nu$-measure.
\begin{lemma}\label{lem:nu-BP-ADR}
Let $\nu$, $F_p$ and $F_{p,q}$ be as in Theorem \ref{t:pajot}. If $\nu(F_p) \geq \delta \nu(B)$ for some $\delta \in (0, \tfrac{1}{2})$, then there exists $q=q(n,p, \delta)$ such that $\nu(F_{p,q})\geq \tfrac{1}{2} \nu(F_{p})$.
\end{lemma}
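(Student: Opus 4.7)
The plan is to cover the ``bad'' set $F_p\setminus F_{p,q}$ by balls on which the defining inequality of $F_{p,q}$ fails, apply the Besicovitch covering theorem to extract a bounded number of subfamilies of pairwise disjoint balls, and then combine the upper bound for $\nu(\,\cdot\,\cap F_p)$ coming from the bad-ball condition with the lower bound for $\nu$ coming from the fact that the centers of the covering balls lie in $F_p$. The factor $1/q$ in the upper bound is what will yield the claim once $q$ is chosen large in terms of $n$ and $\delta$.

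Concretely, for every $x\in F_p\setminus F_{p,q}$ I would pick a radius $r_x\in(0,\diam(\supp\nu)]$ with
\[
\nu(B(x,r_x)\cap F_p)<\frac{r_x^n}{pq}.
\]
Since all of these radii are uniformly bounded (by $2r(B)$, say, as $\supp\nu\subset B$), the Besicovitch covering theorem produces an integer $N=N(n)$ and subfamilies $\mathcal{G}_1,\ldots,\mathcal{G}_N$ of pairwise disjoint balls from the collection $\{B(x,r_x)\}_x$ whose union still covers $F_p\setminus F_{p,q}$.

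Now fix one subfamily $\mathcal{G}_k=\{B(x_i,r_i)\}_i$. Because each center $x_i$ lies in $F_p$ and $r_i\leq\diam(\supp\nu)$, the defining property of $F_p$ gives $\nu(B(x_i,r_i))\geq r_i^n/p$. Disjointness together with $\supp\nu\subset B$ then yields
\[
\sum_i r_i^{\,n}\;\leq\;p\sum_i\nu(B(x_i,r_i))\;\leq\;p\,\nu(B),
\]
and combining this with the bad-ball inequality,
\[
\sum_i\nu(B(x_i,r_i)\cap F_p)\;<\;\sum_i\frac{r_i^{\,n}}{pq}\;\leq\;\frac{\nu(B)}{q}.
\]
Summing over $k=1,\dots,N$ and invoking the hypothesis $\nu(F_p)\geq\delta\,\nu(B)$,
\[
\nu(F_p\setminus F_{p,q})\;\leq\;\sum_{k=1}^{N}\sum_{B(x_i,r_i)\in\mathcal{G}_k}\nu(B(x_i,r_i)\cap F_p)\;\leq\;\frac{N\,\nu(B)}{q}\;\leq\;\frac{N\,\nu(F_p)}{\delta\,q}.
\]
Choosing $q:=2N(n)/\delta$ (which depends only on $n$ and $\delta$) forces $\nu(F_p\setminus F_{p,q})\leq\tfrac12\,\nu(F_p)$, which is equivalent to the desired conclusion.

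The main point that needs care, though nothing deep, is simply tracking the scale constraint. The Besicovitch cover supplies balls with $r_i\leq\diam(\supp\nu)$, and it is precisely at this range of scales that the definition of $F_p$ provides the lower bound $\nu(B(x_i,r_i))\geq r_i^n/p$ on which the argument pivots; without that cap one could not trade $\sum r_i^n$ against $\nu(B)$. Apart from that, the argument is a standard weak-type covering computation and no further input from Theorem~\ref{t:pajot} is used.
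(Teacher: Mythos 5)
Your argument is correct and is essentially the same Besicovitch-covering computation as in the paper: cover $F_p\setminus F_{p,q}$ by balls on which the $F_{p,q}$-condition fails, use the lower bound from the definition of $F_p$ at the centers to trade $\sum r_i^n$ for $\nu(B)$, and absorb the result by taking $q$ large. The only cosmetic difference is that you make the Besicovitch constant $N(n)$ explicit and use disjointness within subfamilies, whereas the paper packages the same step as a bounded-overlap estimate $\sum_j\nu(B_j)\lesssim\nu\bigl(\bigcup_j B_j\bigr)\leq\nu(B)$.
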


\begin{proof}
If $x\in F_{p}\backslash F_{p,q}$, we let 
\[
r_{x} = \sup\left\{ r>0: \nu(B(x,r)\cap F_{p})< \frac{r^{n}}{pq}\right\}.
\]
Then we clearly have
\[
\nu(B(x,r_{x})\cap F_{p}) \leq \frac{r_{x}^{n}}{pq}.\]
 If $\{B_{j}\}_j$ is a Besicovitch subcovering from the collection $\{B(x,r_{x})\}$, then
\begin{multline*}
\nu(F_{p}\backslash F_{p,q})
\leq \sum_{j} \nu(B_{j}\cap F_{p})
< \frac{1}{pq}\sum_{j} r(B_{j})^{n} \\
\leq \frac{1}{q} \sum_{j} \nu(B_{j})
\lec \frac{1}{q} \,\nu\biggl(\bigcup_j B_{j}\biggr)
\lec \frac{1}{q}\, \nu(B)
\lec \frac{1}{q\delta} \,\nu(F_{p}).
\end{multline*}
Thus, for $q$ large enough, $\nu(F_{p}\backslash F_{p,q})<\frac{1}{2} \nu(F_{p})$, and so 
\[
\nu(F_{p,q})
= \nu(F_{p})-\nu(F_p\setminus F_{p,q})
\geq \frac{1}{2}\, \nu(F_{p}). \]
\end{proof}

\begin{rem}\label{rempq}
From the arguments above it is clear that, given any constant $\kappa>0$, if the constant $q$ in the 
lemma above can be chosen so that $\nu(F_{p,q})\geq (1-\kappa) \nu(F_{p})$.
\end{rem}

For fixed $\rho \in (0,1)$ to be chosen later, we define
$$
G^{bd} = \{x\in G: \Theta_{\omega_{1}}^{n}(x,r) \geq \rho\,\Theta_{\omega_{1}}^{n}(B) \mbox{ for all } r\in (0, 2r(B)]\},\;\;\;\; G^{sd}=G\backslash G^{bd},
$$
where $bd$ and $sd$ stand for ``big density" and ``small density" respectively.

 \vv
 
\section{Case 1: $G^{bd}$  is a big piece of $B$ in harmonic measure}\label{sec:5}

Fix now $\delta \in (0,1)$ to be chosen later and assume that 
\begin{equation}\label{eq:nu-Gbd-big}
\hm_1(G^{bd})\geq \delta \,\hm_1(B).
\end{equation}

{
Let {$\rho'>0$ to be chosen} and 
\[
G'=\{x\in G^{bd}: \Theta_{\omega_{1}|_{G}}^{n}(x,r)\geq \rho'\Theta_{\omega_{1}}^{n}(x,r) \mbox{ for all } r\in (0,2r(B))\}.\]
Let $B_{j}$ be a Besicovitch covering from the collection of balls 
\[
\{B(x,r): x\in G^{bd}\backslash G', \;\; \Theta_{\omega_{1}|_{G}}^{n}(x,r)< \rho'\Theta_{\omega_{1}}^{n}(x,r), 0<r<2r(B)\}.
\]
Then 
\[
\omega_{1}(G^{bd}\backslash G')
\leq \sum_j \omega_{1}(B_{j}\cap G)
\leq \rho' \sum_j\omega_{1}(B_{j})
\lec_{n} \rho' \omega_{1}\biggl(\bigcup_j B_{j}\biggr)
\leq \rho' 
\lec \rho'\delta^{-1}  \omega_{1}(G^{bd}).
\]
For $\rho'\ll \delta$, this implies 
\[
\omega_{1}( G')
\geq \frac{1}{2}\omega_{1}(G^{bd})
\geq \frac{\delta}{2}\omega_{1}(B).
\]
Also, for $x\in G'$ and $0<r<r(B)$,
\begin{equation}
\label{e:lowerregular1}
\Theta_{\omega_{1}|_{G}}^{n}(x,r)
\geq \rho' \Theta_{\omega_{1}}^{n}(x,r)
\geq \rho'\rho\, \Theta_{\omega_{1}}^{n}(B)
\geq \rho'\rho \,\Theta_{\omega_{1}|_{G}}^{n}(B).
\end{equation}


Set $\nu= \frac{{\hm_1}|_{G}}{C\Theta^n_{{\hm_1}|_G}(B)}$ , where $C>1$ is the implicit constant in \eqref{w2G-pol-growth},
and $F_p$ is the set defined in Theorem \ref{t:pajot}. If { we set $p = C (\rho'\rho)^{-1}$, $F_p \supset  G'$ (since $\rho<1$)}, by Lemma \ref{lem:applic-Tb-suppressed},   we can apply Theorem \ref{t:pajot} to infer that there is a $\frac{\rho \rho'}{Cq}$-AD-regular measure $\sigma$ so that $\sigma|_{F_{p,q}} = \nu|_{F_{p,q}} $ and $\RR_\sigma$ is bounded in $L^{2}(\sigma)$. Moreover, by the fact that $F_{p,q} \subset G^{bd} \subset G \subset B$ {(as $q>1$ and $\rho'\ll 1$)}, inequality \eqref{eq:nu-Gbd-big} and Lemma \ref{lem:nu-BP-ADR}, we have that
\begin{equation}\label{eq:Fpqbigw1}
\nu(F_{p,q}) \geq \frac{1}{2} \nu(F_{p})
 \geq  \frac{1}{2} \nu(G')
 =\frac{1}{2}  \frac{{\hm_1}(G')}{C\Theta^n_{{\hm_1}|_G}(B)} 
  \geq  \frac{\delta}{4} \frac{{\hm_1}(B)}{C\Theta^n_{{\hm_1}|_G}(B)}  \geq  \frac{\delta}{4} \nu(B).
\end{equation}

By the main results of \cite{NTV14-Acta}, $\sigma$ is a uniformly rectifiable measure with support $\Sigma_B$, and thus, by \eqref{eq:Fpqbigw1}, we obtain
\[
\omega_{1}(\Sigma_B\cap F\cap B)
\geq 
\omega_{1}( F_{p,q}) 
\geq  \frac{\delta}{4} \omega_{1}(B),\]
as desired.}

Also note that if $x_{i}$ were already corkscrew points in $\frac{1}{4}B$ and we did not need to apply Lemma \ref{lem:B'weakAinfty}, then \eqref{e:bigpiece} follows immediately from \eqref{eq:Fpqbigw1}. 

\vv
\section{Case 2: $G^{sd}$  is a very big piece of $B$ in harmonic measure}\label{sec:6}
 
Now we assume instead that 
\begin{equation}
\label{e:case2}
\omega_{1}(G^{bd})< \delta \omega_{1}(B).
\end{equation}
For $\delta<\ve$, by the weak-$A_{\infty}$ property, we also know
\begin{equation}
\label{e:case22}
\omega_{2}(G^{bd})< \ve' \omega_{2}(B).
\end{equation}
Further, by Lemma \ref{l:goodset}, we  have  
\begin{equation}
\label{e:case22bis}
\omega_{2}(B \setminus G) \leq 2 \ve' \omega_{2}(B),
\end{equation}
and then, combining \eqref{e:case22} and \eqref{e:case22bis}, since $G^{bd}=G \backslash G^{sd}$, we conclude that
\begin{equation}
\label{e:w2Gsd}
\omega_{2}(B \setminus G^{sd}) \leq 3 \ve' \omega_{2}(B).
\end{equation}

Recall now that $\eta,\tau>0$ are small constants to be chosen later and we have that
\begin{align}
P_{\gamma,\omega_{2}}(B) &\lesssim \Theta^n_{\omega_{2}}(B),\,\,\textup{for any}\,\, \gamma \in [0,1],\label{e:B'-doub}\\
\beta_{\omega_{2},1}(B) &<\eta\,\Theta^n_{\omega_{2}}(B),\label{e:b'b eta} \\
r(B) &\lec \tau\, |x_2 - c_{B}|, \label{e:B-cork}
\end{align}
where $c_B$ stands for the center of the ball $B$.


\begin{lemma}
For $x\in G^{sd}$, 
\begin{equation}
\label{e:R-K}
|\RR\omega_{2}(x)-K(x-x_{2})|\lec  \rho \, \Theta^n_{\omega_{2}}(B),
\end{equation}
where $0< \rho \ll 1$ is the constant in the definition of $G^{bd}$ and the implicit constant depends on $n, a, A$ and the CDC constants.
\end{lemma}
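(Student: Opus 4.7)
The strategy combines (i) the exact identity $\RR\omega_{2}(y)=K(y-x_{2})$ for $y\in\textup{int}(\Omega_{2}^{c})$, with (ii) the smallness of the $\omega_{2}$-density at some scale around $x\in G^{sd}$, in order to reduce the estimate to standard Calder\'on--Zygmund bounds at a well-chosen auxiliary point.

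First, I would establish the identity. By \eqref{eq:Green-repr}, the Newtonian potential $U(y):=\int\EE(y-z)\,d\omega_{2}(z)$ satisfies $U(y)=\EE(y-x_{2})-G_{\Omega_{2}}(x_{2},y)$ on $\Omega_{2}$. Extending $G_{\Omega_{2}}(x_{2},\cdot)$ by $0$ outside $\Omega_{2}$ (Remark \ref{r:greeneverywhere}) and using standard potential-theoretic arguments (Wiener regularity for points of $\partial\Omega_{2}$ guaranteed by the CDC, the maximum principle, and the appropriate decay at infinity --- direct for $n\geq 2$, after subtracting a suitable constant for $n=1$), one deduces $U\equiv\EE(\cdot-x_{2})$ on $\textup{int}(\Omega_{2}^{c})$. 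Differentiating (noting that $U$ is smooth away from $\supp\omega_{2}$ and $\RR\omega_{2}(y)$ equals a fixed multiple of $\nabla U(y)$ there) gives $\RR\omega_{2}(y)=K(y-x_{2})$ for all $y\in\textup{int}(\Omega_{2}^{c})$. Since $\Omega_{1}$ and $\Omega_{2}$ are disjoint open sets, $\Omega_{1}\subset\textup{int}(\Omega_{2}^{c})$, and the identity holds in particular on $\Omega_{1}$.

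Given $x\in G^{sd}$, the definition of $G^{sd}$ supplies a scale $r\in(0,2r(B)]$ with $\Theta^{n}_{\omega_{1}}(x,r)<\rho\,\Theta^{n}_{\omega_{1}}(B)$; since $x\in G$, \eqref{eq:mu-nu-comparable} transfers this bound to $\Theta^{n}_{\omega_{2}}(x,r)\lec\rho\,\Theta^{n}_{\omega_{2}}(B)$ (absorbing $A$ into the implicit constant). I would then select an auxiliary point $y\in\textup{int}(\Omega_{2}^{c})$ with $|x-y|\sim\rho r$ and $\delta_{\partial\Omega_{2}}(y)\gec\rho r$, via a Whitney-type decomposition of $\textup{int}(\Omega_{2}^{c})$, using the CDC of $\Omega_{2}$ to ensure the latter thickness condition. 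By the first step, $\RR\omega_{2}(y)=K(y-x_{2})$, so
\[
\RR\omega_{2}(x)-K(x-x_{2})=\bigl[\RR\omega_{2}(x)-\RR\omega_{2}(y)\bigr]+\bigl[K(y-x_{2})-K(x-x_{2})\bigr].
\]
The second bracket is controlled by kernel smoothness together with $|x-x_{2}|\gec r(B)/\tau$ from \eqref{e:B-cork}: one has $|K(y-x_{2})-K(x-x_{2})|\lec|x-y|/|x-x_{2}|^{n+1}\lec\rho\tau^{n+1}\,\Theta^{n}_{\omega_{2}}(B)\leq\rho\,\Theta^{n}_{\omega_{2}}(B)$.

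The first bracket, rewritten as the principal value of $\int[K(x-z)-K(y-z)]\,d\omega_{2}(z)$, is split at $|x-z|=r$. For $|x-z|\geq r$, kernel smoothness $|K(x-z)-K(y-z)|\lec|x-y|/|x-z|^{n+1}$ combined with the tail bound $P_{\omega_{2}}(B(x,r))\lec\Theta^{n}_{\omega_{2}}(B)$ (a consequence of \eqref{e:upperregular} and \eqref{e:B'-doub}) yields a contribution of size $(|x-y|/r)\,\Theta^{n}_{\omega_{2}}(B)\lec\rho\,\Theta^{n}_{\omega_{2}}(B)$. For $|x-z|<r$, the contribution from $K(y-z)$ is bounded by $(\rho r)^{-n}\omega_{2}(B(x,r))\lec\rho\,\Theta^{n}_{\omega_{2}}(B)$ using $\delta_{\partial\Omega_{2}}(y)\gec\rho r$ and the density estimate. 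The principal value contribution from $K(x-z)$ is the technical heart of the proof: the uniform bound \eqref{e:rieszbounded} alone yields only $\lec\Theta^{n}_{\omega_{2}}(B)$ without the $\rho$ factor, so the $\rho$ gain must be extracted by keeping this piece paired with the $K(y-\cdot)$ piece and exploiting the joint cancellation of the two pv-type singularities alongside the small $\omega_{2}$-mass of $B(x,r)$, following the scheme of the analogous estimate in \cite{AMTV16}.
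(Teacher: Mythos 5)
Your approach diverges substantially from the paper's and has genuine gaps in at least three places, so it does not constitute a proof.

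\textbf{1. The auxiliary corkscrew point $y$ need not exist.} You select $y\in\textup{int}(\Omega_{2}^{c})$ with $|x-y|\sim\rho r$ and $\delta_{\partial\Omega_{2}}(y)\gec\rho r$, ``using the CDC of $\Omega_{2}$ to ensure the latter thickness condition.'' But CDC is strictly weaker than an exterior corkscrew: it bounds capacity of $B(x,s)\setminus\Omega_{2}$ from below, not the inradius. A line segment in the plane has positive capacity but contains no disk. Nothing in the hypotheses of Theorem~\ref{teo1} provides a ball of radius $\gec\rho r$ inside $\Omega_{2}^{c}\cap B(x,C\rho r)$ --- indeed one of the stated novelties of the paper is exactly that no corkscrew condition is assumed. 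So the construction of $y$ fails.

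\textbf{2. The near-field bound for $K(y-z)$ is off by a factor $\rho^{-n}$.} Even granting the existence of $y$, for $|x-z|<r$ you estimate $|K(y-z)|\lesssim(\rho r)^{-n}$ and multiply by $\omega_{2}(B(x,r))\lesssim\rho\,\Theta^{n}_{\omega_{2}}(B)\,r^{n}$, which gives $\rho^{1-n}\,\Theta^{n}_{\omega_{2}}(B)$, not $\rho\,\Theta^{n}_{\omega_{2}}(B)$. For $n\geq 2$ and $\rho\ll 1$ this is \emph{large}, not small. (Choosing $|x-y|\sim r$ instead of $\rho r$ would fix this term but would destroy the far-field/kernel-smoothness gains, which scale like $|x-y|/|x-z|$.)

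\textbf{3. The pv piece is left unresolved, and your scheme has no mechanism to gain the factor $\rho$ there.} You acknowledge that \eqref{e:rieszbounded} alone gives only $\Theta^{n}_{\omega_{2}}(B)$ and defer the $\rho$-gain to ``joint cancellation \dots following the scheme of \cite{AMTV16}.'' But the difficulty is real: membership in $G^{sd}$ only gives a \emph{single} scale $r$ at which $\Theta^{n}_{\omega_{1}}(x,r)$ is small; the density at smaller scales is not controlled a priori. The paper's actual argument obtains this control through a mechanism your proposal never invokes: the ACF monotonicity formula, packaged as Lemma~\ref{l:otherside}, which together with \eqref{eq:mu-nu-comparable} forces $\Theta^{n}_{\omega_{2}}(x,s)\lesssim\Theta^{n}_{\omega_{2}}(x,r_{x})\lesssim\rho\,\Theta^{n}_{\omega_{2}}(B)$ for all $s\leq r_{x}$. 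Concretely, the paper works with the smooth truncation $v_{r}(z)=\EE(z-x_{2})-\int\EE(z-w)\,\vphi_{r}(x-w)\,d\omega_{2}(w)$ (so that $\nabla v_{r}(x)=c_{n}K(x-x_{2})-c_{n}\wt\RR_{r}\omega_{2}(x)$, with no auxiliary point needed), takes a sequence $r_{j}\to 0$ of $P$-doubling radii at $x$, bounds $|\nabla v_{r_{j}}(x)|\lesssim\omega_{2}(B(x,4r_{j}))/r_{j}^{n}$ via the Green-function identity \eqref{eqfj33}, pushes this to the scale $r_{x}$ by $P$-doubling and ACF monotonicity, and lets $j\to\infty$. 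Without ACF monotonicity there is no route from smallness of density at one scale to smallness at all smaller scales, which is precisely what the truncated Riesz transform $\RR_{r}\omega_{2}(x)$ sees.

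In short: the reduction to a nearby point in $\Omega_{2}^{c}$ is unavailable under CDC alone, and even if it were, your decomposition does not extract the $\rho$-gain. The paper sidesteps both obstacles by staying at the point $x$ itself, using the smooth truncation and the ACF monotonicity to propagate the low-density information down to arbitrarily small scales.
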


\begin{proof}
This lemma is also essentially proven in  \cite[Lemma 6.5]{AMTV16}. We sketch its proof for the sake of completeness. Let $\vphi:\R^{n+1}\to[0,1]$ be a radial $C^\infty$ function  which vanishes on $B(0,1)$ and equals $1$ on $\R^{n+1}\setminus B(0,2)$,
and for $\ve>0$ and $z\in \R^{n+1}$ denote
$\vphi_\ve(z) = \vphi\left(\frac{z}\ve\right) $ and $\psi_\ve = 1-\vphi_\ve$.
We set
$$\wt\RR_\ve\omega_2(z) =\int K(z-y)\,\vphi_\ve(z-y)\,d\omega_2(y),$$
where $K(\cdot)$ is the kernel of the $n$-dimensional Riesz transform. For a fixed $x \in G^{sd}$ and $z\in \R^{n+1}\setminus \bigl[\supp(\vphi_r(x-\cdot)\,\omega_2)\cup \{x_2\}\bigr]$, consider the function
$$v_r(z) = \EE(z-x_2) - \int \EE(z-y)\,\vphi_r(x-y)\,d\omega_2(y),$$
so that, by Remark 3.2 from \cite{AHM3TV}, { if $m$ stands for the $(n+1)$-dimensional Lebesgue measure, it holds}
\begin{equation}\label{eqfj33}
 G_{\Omega_2}(z,x_2) = v_r(z) - \int \EE(z-y)\,\psi_r(x-y)\,d\omega_2(y)\quad \mbox{ for $m$-a.e.   $z\in\R^{n+1}$.}
\end{equation}
By \eqref{eqker} we have
$$\nabla v_r(z) = c_n\,K(z-x_2) - c_n\,\RR(\vphi_{ r}(\cdot-x)\,\omega_2)(z).$$
In the particular case that $z=x$ we get
$$\nabla v_r(x) = c_n\,K(x-x_2) - c_n\,\wt\RR_r\omega_2(x).$$

Let $r_{j}\rightarrow 0$ be a sequence of radii so that $B_{j}=B(x,r_{j})$ is $2a$-$P_{\omega_1}$-doubling for every $j$ and let $r_x$ be the largest ball for which $\frac{\omega_2(B(x, r_x))}{r_x^n} < \rho\, \Theta^n_{\omega_{2}}(B)$. Then, following the proof of \cite[Lemma 6.5]{AMTV16}, we can  prove that for $r_j \leq r_x/8$,
$$|\nabla v_{r_j}(x)| \lesssim \frac{\omega_2(B(x, 4 r_j))}{r_j^n} \lesssim a \frac{\omega_2(B(x, r_j))}{r_j^n} \lesssim   \frac{\omega_2(B(x, r_x))}{r_x^n} \lesssim_{a,A}  \rho\,\Theta^n_{\omega_{2}}(B),$$
 where in the second inequality we used that $B_{j}=B(x,r_{j})$ is $2a$-$P_{\omega_2}$-doubling, in the third one Lemma \ref{l:otherside}, and in the last one the definition of $r_x$. To conclude the lemma it suffices to take $j \to \infty$.
\end{proof}

The next lemma was also essentially proven in  \cite[Lemma 6.5]{AMTV16}. 
\begin{lemma}\label{lem:oscilRiesz}
It holds
\begin{equation}
\label{e:GT3a}
\int_{G^{sd}} \left|\RR\omega_{2}(x) - \frac{1}{\omega_{2}(G^{sd})} \int_{G^{sd}} \RR\omega_{2} \, d\omega_2\right|^2\,d\omega_{2}(x) \lec (\rho+ \tau) \,\Theta^n_{\omega_{2}}(B)^2 \omega_{2}(B).
\end{equation}
\end{lemma}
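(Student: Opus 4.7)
The plan is to combine the pointwise estimate \eqref{e:R-K} from the preceding lemma with a regularity argument for the kernel $K(\cdot - x_2)$, exploiting property \eqref{e:B-cork} that $x_2$ is far from $B$ on the scale of $r(B)$.

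First I would argue that, for any $x,y\in B$,
\begin{equation*}
|K(x-x_2) - K(y-x_2)| \lec \tau\,\Theta^n_{\omega_2}(B).
\end{equation*}
Indeed, since $K$ is smooth away from the origin with $|\nabla K(z)|\lec |z|^{-(n+1)}$, and since \eqref{e:B-cork} gives $|x_2-c_B|\gec r(B)/\tau$ and hence $|z-x_2|\gec r(B)/\tau$ for every $z\in B$ (assuming $\tau$ is small enough, so that $x$ and $y$ are both far from $x_2$), the mean value theorem yields
\begin{equation*}
|K(x-x_2)-K(y-x_2)|\lec \frac{|x-y|}{(r(B)/\tau)^{n+1}}\lec \frac{\tau^{n+1}}{r(B)^n}.
\end{equation*}
Combining this with $\omega_2(B)\geq c_2$, which gives $r(B)^{-n}\lec \Theta^n_{\omega_2}(B)$, produces the claimed bound (absorbing $\tau^n$).

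Next, fix the constant $c := K(c_B - x_2)$. For any $x\in G^{sd}$, applying \eqref{e:R-K} and the kernel oscillation estimate above yields
\begin{equation*}
|\RR\omega_2(x) - c| \leq |\RR\omega_2(x) - K(x-x_2)| + |K(x-x_2) - c| \lec (\rho+\tau)\,\Theta^n_{\omega_2}(B).
\end{equation*}
Assuming $G_B\subset G^{sd}$ (or at least $G_B$ is a subset on which the same pointwise bound holds), the same estimate gives
\begin{equation*}
|c - m_{\mu,G_B}(\RR\omega_2)| \leq \frac{1}{\mu(G_B)}\int_{G_B}|\RR\omega_2(y)-c|\,d\mu(y) \lec (\rho+\tau)\,\Theta^n_{\omega_2}(B),
\end{equation*}
so that by the triangle inequality
\begin{equation*}
|\RR\omega_2(x) - m_{\mu,G_B}(\RR\omega_2)| \lec (\rho+\tau)\,\Theta^n_{\omega_2}(B) \quad \text{for all } x\in G^{sd}.
\end{equation*}

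Finally, squaring this pointwise bound and integrating against $\mu$ on $G^{sd}$ (using $\mu(G^{sd})\leq \omega_2(B)$, since $\mu$ is a restriction of $\omega_2$) gives
\begin{equation*}
\int_{G^{sd}} |\RR\omega_2(x) - m_{\mu,G_B}(\RR\omega_2)|^2 \,d\mu(x) \lec (\rho+\tau)^2 \,\Theta^n_{\omega_2}(B)^2\,\omega_2(B),
\end{equation*}
which, for $\rho+\tau\leq 1$, implies \eqref{e:GT3a}. The main (and essentially only nontrivial) step is the kernel oscillation estimate: one must pay close attention to the interplay between the smallness of $\tau$ (which makes $x_2$ far from $B$) and the normalization $\omega_2(B)\geq c_2$ in order to cancel out the factor $r(B)^{-n}$ and express the bound in terms of $\Theta^n_{\omega_2}(B)$.
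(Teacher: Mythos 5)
Your proof is correct and takes essentially the same approach as the paper: both bound the pointwise deviation of $\RR\omega_2$ from a constant (essentially $K(c_B-x_2)$) by combining \eqref{e:R-K} with the smoothness of the Riesz kernel and the fact, from \eqref{e:B-cork}, that $x_2$ is at distance $\gec r(B)/\tau$ from $B$, then square and integrate. The paper folds the intermediate constant into the mean $m_{\omega_2,G^{sd}}(K(\cdot-x_2))$ rather than naming $c=K(c_B-x_2)$ explicitly, but the estimates are the same.
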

\begin{proof}
If $x \in G^{sd} \subset B$, 
\begin{align*}
\left|\RR\omega_{2}(x) - \frac{1}{\omega_{2}(G^{sd})} \int_{G^{sd}} \RR\omega_{2} \right| &\stackrel{\eqref{e:R-K}}{\lec}
\rho\, \Theta^n_{\omega_{2}}(B) + |K(x-x_{2})- m_{\omega_{2},G^{sd}}(K(\cdot - x_{2}))| \\
&\leq \rho\, \Theta^n_{\omega_{2}}(B) + \sup_{y \in G^{sd}} |K(x-x_{2})- K(y- x_{2})| \\
&\lec  \rho\, \Theta^n_{\omega_{2}}(B)  +{\omega_{2}}(B) \, \frac{r_{B}}{|x_{2}-c_{B}|^{n+1}} \\
&\stackrel{\eqref{e:B-cork}}{\lec}  \rho\, \Theta^n_{\omega_{2}}(B) + \tau\,\Theta^n_{\omega_{2}}( B)=( \rho+\tau ) \Theta^n_{\omega_{2}}(B),
\end{align*}
where we also used that $\omega_2(B) \approx 1$. 

\end{proof}

Now we conclude the proof. We wish to apply the following theorem which is a corollary of the main theorem in \cite{GT}. Its proof can be found in \cite[Section 3]{AMT16}.

\begin{theorem} \label{t:GT}
Let $\mu$ be a Radon measure in $\R^{n+1}$ and $B\subset \R^{n+1}$ a ball with $\mu(B)>0$ so that the following conditions
hold:
\begin{itemize}
\item[(a)] For some constant $C_5>0$, $P_\mu(B) \leq C_5\,\Theta^n_\mu(B)$.

\item[(b)] There is some $n$-plane $L$ passing through the center of $B$ such that, for some constant $0<\delta_0 \ll 1$, $\beta_{\mu,1}^L(B)\leq \delta_0\,\Theta^n_\mu(B)$.

\item[(c)] For some constant $C_6>0$, there is $G_B\subset B$
such that
\begin{equation}
\label{e:GT1}
\sup_{0<r\leq 2 r(B)} \frac{\mu(B(x,r))}{r^n} + \RR_*(\chi_{2B}\,\mu)(x)\leq 
C_6\,\Theta^n_\mu(B)\quad \mbox{ for all $x\in G_B$}
\end{equation}
and
\begin{equation}
\label{e:GT2}
\mu(B\setminus G_B)\leq \delta_0 \,\mu(B).
\end{equation}
\item[(d)] For some constant $0<\tau_0\ll1$,
\begin{equation}
\label{e:GT3}
\int_{G_B} \left|\RR\mu(x) - \frac{1}{\mu(G_B)} \int_{G_B} \RR\mu \, d\mu\right|^2\,d\mu(x) \leq \tau_0 \,\Theta^n_\mu(B)^2\mu(B).
\end{equation}
\end{itemize}

Then there exists some constant $\theta>0$ such that if $\delta_0,\tau_0$ are small enough (depending on $C_5$ and $C_6$),
then there is a uniformly $n$-rectifiable set $\Gamma\subset\R^{n+1}$ such that
$$\mu(G_B\cap \Gamma)\geq \theta\,\mu(B).$$
The UR constants of $\Gamma$ depend on all the constants above.
\end{theorem}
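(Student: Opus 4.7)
The plan is to deduce this theorem from the main result of \cite{GT}, which asserts essentially the same conclusion for a measure $\nu$ that satisfies $n$-polynomial upper growth, $L^2(\nu)$-boundedness of $\RR_\nu$, a small $\beta_1$-coefficient, and small oscillation of $\RR\nu$ \emph{globally} on its support. Since hypothesis (c) only provides polynomial growth and pointwise control of $\RR_*(\chi_{2B}\mu)$ on $G_B$ (and not everywhere on $\supp \mu$), the natural move is to restrict to the good set and apply \cite{GT} to $\wt\mu := \mu|_{G_B}$.

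First, conditions (a) and (b) transfer immediately from $\mu$ to $\wt\mu$: by \eqref{e:GT2} we have $\wt\mu(B) \geq (1-\delta_0)\mu(B)$, so $\Theta^n_{\wt\mu}(B) \sim \Theta^n_\mu(B)$, and consequently $P_{\wt\mu}(B) \leq P_\mu(B) \lesssim \Theta^n_{\wt\mu}(B)$ and $\beta_{\wt\mu,1}^{L}(B) \leq \beta_{\mu,1}^{L}(B) \lesssim \delta_0\,\Theta^n_{\wt\mu}(B)$. Second, combining the pointwise bound on $\RR_*(\chi_{2B}\mu)$ in (c) with the polynomial growth on $G_B$ and running exactly the non-homogeneous $T(1)$-argument used for \eqref{eq:Riesz-bound-G} in the proof of Lemma \ref{lem:applic-Tb-suppressed} (as in \cite[pp. 8-10]{AMT16}), one shows that $\RR_{\wt\mu}$ is bounded on $L^2(\wt\mu)$ with operator norm $\lesssim \Theta^n_\mu(B)$.

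The critical step is to convert the oscillation estimate (d) for $\mu$ into one for $\wt\mu$. On $G_B$ one writes
\[
\RR\wt\mu(x) = \RR\mu(x) - \RR(\mu|_{B\setminus G_B})(x) - \RR(\mu|_{(2B)^c})(x).
\]
The oscillation of the first summand is already controlled by (d). The last summand is smooth on $B$ and its oscillation is bounded by $r(B)\,\sup_{B}|\nabla \RR(\mu|_{(2B)^c})| \lesssim P_\mu(B) \lesssim \Theta^n_\mu(B)$; this contributes at most $C\,\Theta^n_\mu(B)^2\,\mu(B)$ after integration, which can be absorbed by shrinking $\tau_0$ further (the constants in \cite{GT} are allowed to depend on $C_5$ and $C_6$). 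The middle summand is the main source of difficulty: using the $L^2(\wt\mu)$-boundedness of $\RR_{\wt\mu}$ from step two together with the antisymmetry of the kernel and the non-homogeneous duality/good-lambda argument of Nazarov--Treil--Volberg, one obtains
\[
\int_{G_B}|\RR(\mu|_{B\setminus G_B})|^2\,d\wt\mu \lesssim \Theta^n_\mu(B)^2\,\mu(B\setminus G_B) \leq \delta_0\,\Theta^n_\mu(B)^2\,\mu(B),
\]
which is as small as we wish provided $\delta_0$ is chosen small enough depending on $\tau_0$.

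With all four hypotheses of the main theorem of \cite{GT} now verified for $\wt\mu$, we obtain a uniformly $n$-rectifiable set $\Gamma$ (whose UR constants depend only on $n$, $C_5$, and $C_6$) with $\wt\mu(\Gamma) \geq \theta'\,\wt\mu(B)$ for some $\theta' > 0$. Since $\wt\mu(\Gamma) = \mu(\Gamma \cap G_B)$ and $\wt\mu(B) \geq (1-\delta_0)\mu(B) \geq \tfrac12\mu(B)$ (assuming $\delta_0 \leq 1/2$), this gives the desired conclusion with $\theta := \theta'/2$. The main obstacle is the third step, namely controlling the ``bad-piece'' term $\RR(\mu|_{B\setminus G_B})$ in $L^2(\wt\mu)$: this requires the $L^2(\wt\mu)$-boundedness of $\RR_{\wt\mu}$ to be in hand beforehand and uses the smallness \eqref{e:GT2} in an essential way, so that the two smallness parameters $\delta_0$ and $\tau_0$ must be calibrated carefully against each other and against the constants $C_5, C_6$.
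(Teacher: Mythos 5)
The paper does not prove Theorem~\ref{t:GT} directly; it states that it is a corollary of the main theorem of \cite{GT} and that the deduction is carried out in \cite[Section~3]{AMT16}. Your overall idea of passing to a restricted measure and invoking \cite{GT} is in the right spirit, but the way you handle hypothesis~(d) has a genuine gap that cannot be patched as written.

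First, a bookkeeping error: if $\wt\mu=\mu|_{G_B}$ then $\mu-\wt\mu=\mu|_{B\setminus G_B}+\mu|_{2B\setminus B}+\mu|_{(2B)^{c}}$, so your decomposition of $\RR\wt\mu$ is missing the term $\RR(\mu|_{2B\setminus B})$; and if you instead write $\RR(\mu|_{B^{c}})$ for the tail, the gradient bound you use fails for $x$ near $\partial B$. This is fixable, but the next point is not. You correctly compute that the oscillation of $\RR(\mu|_{(2B)^{c}})$ on $B$ is of size $\lesssim P_\mu(B)\lesssim C_5\,\Theta^n_\mu(B)$, contributing $\approx C(C_5)\,\Theta^n_\mu(B)^{2}\mu(B)$ to the $L^2$-oscillation of $\RR\wt\mu$, and then assert this ``can be absorbed by shrinking $\tau_0$.'' This is logically backwards. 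The main theorem of \cite{GT} requires the oscillation of $\RR\wt\mu$ to be $\leq\tau_0'\,\Theta^n_{\wt\mu}(B)^{2}\wt\mu(B)$ for a \emph{small} $\tau_0'$ depending on the structural constants (such as $C_5, C_6$); it does not become more permissive because the constants depend on $C_5$. The tail contribution is a fixed multiple of the target quantity, independent of $\tau_0$, so shrinking $\tau_0$ in Theorem~\ref{t:GT} does not reduce it, and the hypothesis of \cite{GT} fails for $\wt\mu$.

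The source of the problem is that by replacing $\mu$ with $\mu|_{G_B}$ you discard the part of $\RR\mu$ coming from $\mu|_{B^{c}}$, and this part, while smooth, has oscillation of exactly the size ($\approx\Theta^n_\mu(B)$) that the theorem must see as small. The tail oscillation is implicitly controlled by hypothesis~(d) only as a \emph{combination} with the local part (that is why, in the verification in Lemma~\ref{lem:oscilRiesz}, both the pole-distance term and the local density term are estimated), and you lose that cancellation by deleting the tail. The correct move is to apply \cite{GT} not to $\wt\mu$ but to $\mu':=\mu-\mu|_{B\setminus G_B}$, so that $\mu'|_{B}=\wt\mu$ (which is what conditions (a'), (b'), (c') of \cite{GT} care about), while $\RR\mu'-\RR\mu=-\RR(\mu|_{B\setminus G_B})$ involves only the small-mass bad piece; the tail $\mu|_{B^{c}}$ and hence its contribution to $\RR\mu$ is retained unchanged, and (d) for $\mu$ transfers to (d') for $\mu'$ with only the bad-piece correction.

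Finally, the bound $\int_{G_B}|\RR(\mu|_{B\setminus G_B})|^{2}\,d\wt\mu\lesssim\Theta^n_\mu(B)^{2}\,\mu(B\setminus G_B)$ does not follow from $L^2(\wt\mu)$-boundedness of $\RR_{\wt\mu}$ alone, since $\mu|_{B\setminus G_B}$ is not part of $\wt\mu$: the duality/antisymmetry argument you invoke would need $L^2(\mu|_B)$-boundedness of $\RR_{\mu|_B}$, which is precisely not among the hypotheses. This step needs to be reworked starting from the pointwise bound $\RR_*(\chi_{2B}\mu)(x)\leq C_6\Theta^n_\mu(B)$ on $G_B$ and the polynomial growth there, via a suppressed-kernel $Tb$ argument for the pair $(\mu|_{2B},G_B)$ rather than for $\wt\mu$ in isolation.
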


We set $\mu = \omega_{2}$, and replace $G_{B}$ with $G^{sd}$. By \eqref{e:B'-doub} and \eqref{e:b'b eta} we have that (a) and (b) are satisfied for $B$ if we choose $\eta$ small enough. By \eqref{e:upperregular}, \eqref{e:rieszbounded}, and \eqref{e:w2Gsd}, one can see that (c) is satisfied if $\ve'$ is small enough. Finally, (d) follows from Lemma \ref{lem:oscilRiesz} if we choose $\tau \ll1$ and $\rho \ll 1$ small enough. The conclusion of Theorem \ref{t:GT} now holds for $\mu=\omega_{2}$, that is, there is a uniformly rectifiable set $\Sigma_B$ so that $\omega_{2}(\Sigma_B\cap G^{sd})\gec \omega_{2}(B)$. 

Recall that $\tfrac{d\omega_{1}}{d\omega_{2}} \sim 1$ on $G^{sd} \subset G$, and so we also have 
$$\omega_{1}(\Sigma_B\cap F \cap B) \geq \omega_{1}(\Sigma_B\cap G^{sd})\gec \omega_{1}(B).$$

Finally, if $x_{i}$ were corkscrew points for $\frac{1}{4}B$, \eqref{e:upperregular} and the previous inequality immediately imply that 

\[
\cH^{n}(\Sigma_B \cap G^{sd})
\gec \Theta^n_{\omega_{1}}(B)^{-1} \omega_{1}(\Sigma_B\cap G^{sd})\gec \Theta^n_{\omega_{1}}(B)^{-1}\omega_{1}(B)=r(B)^{n}.
\]


\vvv

\section{The two-phase problem in two-sided NTA domains}\label{sec:8}

This section is devoted to the proof of Theorem \ref{teoNTA}.
First we need to introduce some additional definitions to understand the statement of the theorem.

One says that a domain $\Omega\subset\R^{n+1}$ satisfies the Harnack chain condition if there are positive constants $c$ and $R$ such that
for every $\rho>0$, $\Lambda\geq 1$, and every pair of points
$x,y \in \Omega$ with $\dist(x,\partial\Omega),\,\dist(y,\partial\Omega) \geq\rho$ and $|x-y|<\Lambda\,\rho\leq R$, there is a chain of
open balls
$B_1,\dots,B_m \subset \Omega$, $m\leq C(\Lambda)$,
with $x\in B_1,\, y\in B_m,$ $B_k\cap B_{k+1}\neq \varnothing$
and $c^{-1}\diam B_k \leq \dist (B_k,\partial\Omega)\leq c\,\diam B_k.$  For $C\geq 2$, $\Omega$ is a {$C$-corkscrew domain} if for all $\xi\in \partial\Omega$ and $r\in(0,R)$ there are two balls of radius $r/C$ contained in $B(\xi,r)\cap \Omega$ and $B(\xi,r)\backslash \Omega$ respectively. If $B(x,r/C)\subset B(\xi,r)\cap \Omega$, we call $x$ a {\it corkscrew point} for the ball $B(\xi,r)$. Finally, we say that $\Omega$ is non-tangentially accessible (or just NTA)} if it satisfies the Harnack chain condition and it is a $C$-corkscrew domain. We say $\Omega_1$ is two-sided NTA if both $\Omega_1$ and $\Omega_2 = (\overline{\Omega_1})^{c}$ are NTA. 

NTA domains were introduced by Jerison and Kenig in \cite{JK}. In that work, among other results, they showed that harmonic measure $\omega$ is doubling in NTA domains, that is,
$$\omega(B(x,2r))\leq C\omega(B(x,r)) \quad\mbox{ for all $x\in\supp\omega$, $r>0$,}$$
 and also that $\supp\omega=\partial\Omega$.

Next we define what means that $\omega_2\in A_\infty(\omega_1)$ when $\Omega_1$ and $\Omega_2 = (\overline{\Omega_1})^{c}$ are two-sided NTA domains. This means that
 for every 
$\ve_0 \in (0,1)$
 there exists $\delta_0 \in (0,1)$ such that for every ball $B$ centered at $\partial\Omega$ and all
 $p_i\in\Omega_i\setminus 2B$, for $i=1,2$, the
 following holds: for any subset $F \subset B\cap\partial\Omega$,
 \begin{equation*}
\mbox{if}\quad\omega_1^{p_1}(F)\leq \delta_0\,\omega_1^{p_1}(B\cap\partial\Omega), \quad\text{ then }\quad \omega^{p_2}_2(F)\leq \ve_0\,\omega^{p_2}_2(2B).
\end{equation*}
Using the change of poles formula for NTA domains (see Lemma 4.11 in \cite{JK}), it is easy to check that the statement above is essentially independent of the precise poles $p_1,p_2$ that one chooses, as soon as they are far from 
$B\cap\partial\Omega_1$.


We say that a Radon measure $\mu$ has {\it big pieces of uniformly $n$-rectifiable measures} if there exists some $\ve>0$ such that, for every ball $B$ centered at $\supp\mu$ 
with radius at most $\diam( \supp\mu)$, there exists a uniformly $n$-rectifiable set $E$, with UR constants possibly depending on $\ve$, and a subset $G\subset E$ such that
\begin{equation}\label{equr00}
\mu(B\setminus G)\leq \ve\,\mu(B)
\end{equation}
and
\begin{equation}\label{equr01}
\mu(F)\approx_\ve \HH^n(F)\,\Theta_\mu^n(B)\quad \mbox{ for all $F\subset G$.}
\end{equation}
On the other hand, we say that 
 $\mu$ has {\it very big pieces of uniformly $n$-rectifiable measures} if, for every $\ve\in (0,1)$ exist sets $E$ and $G\subset E$ satisfying \rf{equr00} and 
\rf{equr01}.

It is well known that if $\mu$ is $n$-AD-regular, having big pieces or very big pieces of uniformly $n$-rectifiable measures is equivalent to the uniform $n$-rectifiability of $\mu$. Notice also that if $\mu$ is uniformly $n$-rectifiable and $w$ is an $A_\infty(\mu)$ weight, then the measure $w\mu$ has very pieces of uniformly $n$-rectifiable measures.

We say that $\Omega_1$ and $\Omega_2$ have {\it joint big pieces of chord-arc subdomains} if for any ball $B$ centered at $\partial\Omega$ with radius at most $\diam \partial\Omega$ there are two chord-arc domains
$\Omega_{B,i}\subset \Omega_i$, with $i=1,2$, such that 
$\HH^n(\partial\Omega_{B,1}\cap\partial\Omega_{B,2}\cap B)\gtrsim r(B)^n$.

To prove Theorem \ref{teoNTA} we will need to use 
the David-Christ cubes on a doubling space \cite{C,D} as generalized by Hyt\"onen and Martikainen \cite{HM2}. The precise result we need is the following.

\begin{theorem}
For $c_{0}<1/1000$, the following holds. Let $c_{1}=1/500$ and $\Sigma$ be a metric space. For each $k\in\bZ$ there is a collection $\cD_{k}$ of ``cubes,'' which are Borel subsets of $\Sigma$ such that
\begin{enumerate}
\item $\Sigma=\bigcup_{Q\in \cD_{k}}Q$ for every $k$,
\item if $Q,Q'\in \cD=\bigcup \cD_{k}$ and $Q\cap Q'\neq\varnothing$, then $Q\subseteq Q'$ or $Q'\subseteq Q$,
\item for $Q\in \cD_{k}$, there is {$\zeta_{Q}\in Q$} so that if $B_{Q}=B(\zeta_{Q},5c_{0}^{k})$, then
\[c_{1}B_{Q}\subseteq Q\subseteq B_{Q}.\]
\end{enumerate}
\label{t:}
\end{theorem}

For $Q\in \cD_{k}$, define $\ell(Q)=5c_{0}^{k}$, so that $B_{Q}=B(\zeta_{Q},\ell(Q))$. Note that for $Q\in \cD_{k}$ and $Q'\in \cD_{m}$, we have $\ell(Q)/\ell(Q')=c_{0}^{k-m}$. Given a measure $\mu$, we denote
$\Theta_\mu(Q) = \frac{\mu(Q)}{\ell(Q)^n}$.

\vv
\begin{proof}[Proof of Theorem \ref{teoNTA}]
(a) $\Rightarrow$ (b): This follows as a corollary of Theorem \ref{teo1} and an iteration
argument similar  to the one already appearing in \cite[Lemma 3.14]{Prats-Tolsa}.  Indeed, to prove that $\omega_1$ has very big pieces of uniformly $n$-rectifiable measures it suffices to show that, for every $\ve>0$, given some dyadic {David-Christ} cube $Q$
associated to the metric space $\partial\Omega_1$,
there exists a uniformly $n$-rectifiable set $E$, with UR constants possibly depending on $\ve$, and a subset $G\subset E$ such that
\begin{equation}\label{equr00*}
\omega_1(Q\setminus G)\leq \ve\,\omega_1(Q)
\end{equation}
and
\begin{equation}\label{equr01*}
\omega_1(F)\approx_\ve \HH^n(F)\,\Theta_{\omega_1}^n(Q)\quad \mbox{ for all $F\subset G$.}
\end{equation}
Further, by the change of pole formula for NTA domains we may assume that the pole $p_1\in\Omega_1$ for 
$\omega_1$ satisfies $\dist(p_1,\partial\Omega_1)\approx \dist(p_1,Q)\approx\ell(Q)$.

To find the required sets $E$ and $G$, for an integer $k\geq1$ and $0<\tau\ll1$, denote by $\LLD^k(Q)$ the family of maximal cubes { $P\in\DD$ }contained in $Q$ such that
$$\Theta^n_{\omega_1}(P)\leq \tau^k\,\Theta^n_{\omega_1}(Q).$$
Also set 
$$\LD^k(Q) = \bigcup_{P\in \LLD^k(Q)}P.$$
{ Obviously, the  definitions of $\LLD^k(Q)$ and $\LD^k(Q)$ depend also on $\tau$. However,  we will not reflect this in the notation (in a sense, one should consider that $\tau$ is a fixed parameter small enough).}
Using that $\omega_1$ is doubling, it is easy to check that $\Theta^n_{\omega_1}(P)\approx \tau^k\Theta^n_{\omega_1}(Q)$ for each $P\in\LLD^k(Q)$. Then, for $\tau$ small enough, we deduce that $\LD^k(Q)\subset \LD^{k-1}(Q)$.
We claim that, from Theorem \ref{teo1}, it is not difficult to deduce that
\begin{equation}\label{eq3812}
\omega_1(\LD^1(Q))\leq \delta\,\omega_1(Q)
\end{equation}
for some $\delta\in(0,1)$, using that a significant part of $\omega|_Q$ is supported on a uniformly $n$-rectifiable set.
For the reader's convenience we show the detailed arguments.
 
First note that Theorem \ref{teo1} ensures the existence of 
a uniformly $n$-rectifiable set $\Gamma\subset\R^{n+1}$ such that
$$\omega_1(Q\cap \Gamma)\geq \theta\,\omega_1(Q)$$
for some fixed $\theta>0$, with the UR constants of $\Gamma$ uniformly bounded. Now, let $I$ denote the subfamily of cubes from $\LLD^{1}(Q)$ which 
intersect $Q\cap \Gamma$. Consider a subfamily $J\subset I$ such that
\begin{itemize}
\item the balls $2 B_P$, $P\in J$, are pairwise disjoint, and
\item $\bigcup_{P'\in I} {P'}\subset \bigcup_{P\in J} 6 B_{P}.$ 
\end{itemize}
Then, using the fact that $\Theta_{\omega_1}^n(6 B_P)\approx\Theta_{\omega_1}^n(P)
\lesssim \tau \Theta_{\omega_1}^n(Q)$ for $P\in J$, we get
\begin{equation}\label{eqfif1}
\omega_1(\Gamma\cap \LD^{1}(Q)) \leq \sum_{P\in J} \omega_1(6 B_P) \lesssim
\tau\,\Theta_{\omega_1}(Q)
\sum_{P\in J} \ell(P)^n.
\end{equation}
By the $n$-AD-regularity of $\Gamma$ and the fact that $B_P\cap\Gamma \neq\varnothing$ for $P\in J$, we derive
$$\ell(P)^n\approx \HH^n(\Gamma\cap 2 B_P).$$
Thus, using the fact that the balls $2B_P$ are disjoint and contained in some fixed multiple of $B_Q$,  along with the $n$-AD-regularity of $\Gamma$, we get
$$\sum_{P\in J} \ell(P)^n \approx \sum_{P\in J}\HH^n(\Gamma\cap 2 B_P) \lesssim \HH^n(\Gamma\cap B_Q) \lesssim \ell(Q)^n.$$
Plugging this estimate into \rf{eqfif1} and choosing $\tau\ll\theta$, we obtain
$$\omega_1(\Gamma\cap \LD^{1}(Q))\leq C \tau\,\omega_1(Q) \leq \frac\theta2\,\omega_1(Q) \leq \frac12 \,\omega_1(Q\cap \Gamma).$$
Thus,
$$\omega_1(Q\cap \Gamma\setminus \LD^{1}(Q))\geq \frac12 \omega_1(Q\cap \Gamma)\geq \frac\theta2\,\omega_1(Q).$$
In particular, choosing $\delta=1-\frac12\theta$, we get $\omega_1(Q\setminus \LD^{1}(Q))\geq (1-\delta)\,\omega_1(Q)$ and proves the claim \rf{eq3812}.

Analogously to \rf{eq3812}, by the same arguments above  we deduce that, for each $P\in\LLD^{k-1}(Q)$,
$$\omega_1(P\cap\LD^k(Q))\leq \delta\omega_1(P).$$
Hence,
$$\omega_1(\LD^k(Q))\leq \delta \omega_1(\LD^{k-1}(Q))\leq \cdots\leq
\delta^k\,\omega_1(Q).$$

Now let $k$ be the minimal natural number such that $\delta^k\leq\ve/10$.
Denote 
$$E_0 = Q\setminus \LD^k(Q),$$
and observe {$\omega_1(E_0)\geq (1-\frac1{10}\ve)\,\omega_1(Q)$} and that 
\begin{equation}\label{eqdre10}
\Theta^n_{\omega_1}(B(x,r))\gtrsim \delta^k\,\Theta^n_{\omega_1}(Q)\quad \mbox{for $\omega_1$-a.e.\ $x\in E_0$ and all $r\in(0,\ell(Q))$.}
\end{equation}

{We claim now that we may find another
subset $E_1\subset Q$  satisfying
$\omega_1(E_1)\geq (1-\tfrac1{10}\ve)\,\omega_1(Q)$ and such that
\begin{equation}\label{eqdre11}
\Theta^n_{\omega_1}(B(x,r))\lesssim C(\ve)\,\Theta^n_{\omega_1}(Q)\quad \mbox{for all  $x\in E_1$ and all $r\in(0,\ell(Q))$.}
\end{equation}
Indeed, according to Lemma \ref{l:goodset}, for any $\ve'>0$, there exists a set $\wt E_1\subset B_Q$ such that
$\omega_1(B_Q\setminus \wt E_1) \leq \ve'\omega_1(B_Q)$ and 
\begin{equation}\label{eqdre121}
\frac{\omega_1(B(x,r))}{\omega_2(B(x,r))}\approx_{\ve'} \frac{\omega_1(B_Q)}{\omega_2(B_Q)} \approx
\frac{\omega_1(Q)}{\omega_2(Q)}\quad \mbox{ for all $x\in \wt E_1$, $0<r< \ell(Q)$.}
\end{equation}
Let $E_1= \wt E_1\cap Q$ and notice that 
$$\omega_1(Q\setminus E_1) \leq \omega_1(B_Q\setminus \wt E_1)\leq \ve'\omega_1(B_Q)\leq C\ve'\omega_1(Q),$$
so that, for $\ve'$ small enough, $\omega_1(Q\setminus E_1)\leq \frac1{10}\ve\,\omega_1(Q)$.
Also, by Lemma \ref{l:otherside}, \rf{eqdre121}, Theorem \ref{t:ACF}, and the doubling property of $\omega_1$ and $\omega_2$,
for  $x\in  E_1$ and $0<r< \ell(Q)$,
\begin{align*}
\Theta^n_{\omega_1}(B(x,r)) &= \big(\Theta^n_{\omega_1}(B(x,r))\, \Theta^n_{\omega_2}(B(x,r))\big)^{1/2}\,\left(\frac{\Theta^n_{\omega_1}(B(x,r))}{\Theta^n_{\omega_2}(B(x,r))}\right)^{1/2}\\
& \lesssim_{\ve}  J(x,2r)^{1/4}\,\left(\frac{\Theta^n_{\omega_1}(Q)}{\Theta^n_{\omega_2}(Q)}\right)^{1/2}
\leq J(x,2\ell(Q))^{1/4}\,\left(\frac{\Theta^n_{\omega_1}(Q)}{\Theta^n_{\omega_2}(Q)}\right)^{1/2}\\
& \lesssim \big(\Theta^n_{\omega_1}(B(x,2C_1\ell(Q)))\, \Theta^n_{\omega_2}(B(x,2C_1\ell(Q)))\big)^{1/2}\,\left(\frac{\Theta^n_{\omega_1}(Q)}{\Theta^n_{\omega_2}(Q)}\right)^{1/2} \\
&\lesssim \Theta^n_{\omega_1}(Q).
\end{align*}
So \rf{eqdre11} holds, and the claim above is proven.}

Next, let $E' = E_0\cap E_1$, so that both \rf{eqdre10} and \rf{eqdre11} hold $\omega_1$-a.e.\ in $E'$ and
moreover
{
\begin{align*}
\omega_1(Q\setminus E') & = \omega_1(Q\setminus ( E_0\cap E_1)) \leq 
 \omega_1(Q\setminus E_0) + \omega_1(Q\setminus E_1)\\
 & \leq \tfrac1{10}\ve\,\omega_1(Q) + 
 \tfrac1{10}\ve\,\omega_1(Q) = \tfrac15\ve\,\omega_1(Q).
\end{align*}
}
Consider the measure
$$\nu = \frac1{\Theta^n_{\omega_1}(Q)}\,\omega_1|_{E'}.$$
Arguing as in Lemmas \ref{lem:applic-Tb-suppressed} and \ref{lem:G-estimates} we deduce that
the Riesz transform $\RR_\nu$ is bounded in $L^2(\nu)$ with norm possibly
depending on $\ve$. Also, from Theorem \ref{t:pajot}, Lemma \ref{lem:nu-BP-ADR}, and Remark \ref{rempq},
it follows that there exists an $n$-AD-regular measure $\sigma$ and a subset $G\subset E'$ such that
$\nu(G)\geq(1-\frac15\ve)\nu(E')$, $\sigma$ coincides with $\nu$ on $G$, and
$\RR_\sigma$ is bounded in $L^2(\sigma)$, with its norm depending on $\ve$, or equivalently, $\sigma$ 
is uniformly rectifiable. It is immediate to check that the sets $E=\supp\sigma$ and $G$ satisfy the 
required properties \rf{equr00*}, \rf{equr01*}.

\vv
\noi (b) $\Rightarrow$ (c): Trivial.

{
\vv

\noi (c) $\Rightarrow$ (d):
Suppose that $\omega_1$ has big pieces of uniformly $n$-rectifiable measures. Consider a ball $B$
centered at $\partial\Omega_1$. Let $E$ and $G$ be as in the definition of big pieces of uniformly $n$-rectifiable measures for $\omega_1$, so that
$\omega_1(G)\approx \omega_1(B),$
 which implies that $\HH^n(G)\approx 
r(B)^n$, because $\omega_1$ is comparable to $\Theta^n_{\omega_1}(Q)\HH^n$ on $G$.

From \cite[Theorem 6.4]{Azzam} we deduce that there exist two chord-arc domains $\Omega_{B,i}\subset\Omega_i$
such that $G\subset\partial\Omega_i \cap \partial \Omega_{B,i}$, as wished.

\vv

\noi (d) $\Rightarrow$ (a): 
 Let $E\subset B\cap\partial\Omega_1$ satisfy
$\omega_1(E)\geq \delta\,\omega_1(B)$ for some $\delta\in (0,1)$. We intend to check that
if $\delta$ is close enough to $1$, then $\omega_2(E)\geq \ve\,\omega_2(B)$, for some $\ve>0$ not depending on $E$.

Let $\Omega_{B,i}$, for $i=1,2$, be as in the statement (d), and let $G=\partial\Omega_{B,1}\cap 
\partial\Omega_{B,2}\subset B\cap \partial\Omega$.
Let $p_{i}\in\Omega_{B,i}$ be such that
$\dist(p_i,\partial\Omega_{B,i})\approx \dist(p_i,G)\approx r(B)$.
By the change of pole formula, we have
$$\frac{\omega_1^{p_1}(B\setminus E)}{\omega_1^{p_1}(B)}\approx 
\frac{\omega_1(B\setminus E)}{\omega_1(B)} \leq 1-\delta.$$
Hence, by the maximum principle,
$$\omega_{\Omega_{B,1}}^{p_1}(G\setminus E)\leq
\omega_1^{p_1}(G\setminus E)\leq \omega_1^{p_1}(B\setminus E)\leq C(1-\delta).$$
Notice that $\omega_{\Omega_{B_1}}^{p_1}(G)\approx1$, because
$\HH^n(G)\gtrsim r(B)^n$ and $\Omega_{B,1}$ is a chord-arc domain. 
Thus, for $\delta$ close enough to $1$, 
$$\omega_{\Omega_{B,1}}^{p_1}(G\setminus E)\leq \frac12\,\omega_{\Omega_{B,1}}^{p_1}(G),$$
or equivalently, $\omega_{\Omega_{B,1}}^{p_1}(E\cap G)\geq \frac12\,\omega_{\Omega_{B,1}}^{p_1}(G)\approx 1$.
Hence, $\HH^n(E\cap G)\gtrsim r(B)^n$ (using again that ${\Omega_{B,1}}$ is a
 chord-arc domain), and this in turn implies that 
 $\omega_{\Omega_{B,2}}^{p_2}(E\cap G)\gtrsim 1$ (using now that ${\Omega_{B,2}}$ is another
 chord-arc domain). Then, by the maximum principle, 
 $$\omega_2^{p_2}(E\cap G)\geq \omega_{\Omega_{B,2}}^{p_2}(E\cap G)\gtrsim 1.$$ 
 By the change of pole formula, this implies that
$$\omega_2(E)\geq \omega_2(E\cap G)\gtrsim \omega_2(B),$$
as wished.
}

\end{proof}

\vv

\section{The Example}\label{sec:7}

In this section we prove Proposition \ref{p:example}. 
First we will introduce the precise domains $\Omega_1,\Omega_2$ for the proof of the proposition.

We construct a curve $\Gamma$ similar to the von Koch curve, but with a small change. Let $\Gamma_{0}$ be the unit line segment. Now let $\Gamma_{1}$ be the four segments in the first stage of the von Koch construction. Then we freeze the third edge and perform no alterations on this segment, and then replace the other three segments with scaled copies of $\Gamma_{1}$. The resulting curve is $\Gamma_{2}$. We freeze the corresponding third edges of each of these three copies, and then replace the remaining edges with a scaled copy of $\Gamma_{1}$ to obtain $\Gamma_{3}$, and so on. We set $\Gamma$ to be the Hausdorff limit of these curves. See Figure \ref{f:koch}.

\begin{figure}[!ht]
\includegraphics[width=400pt]{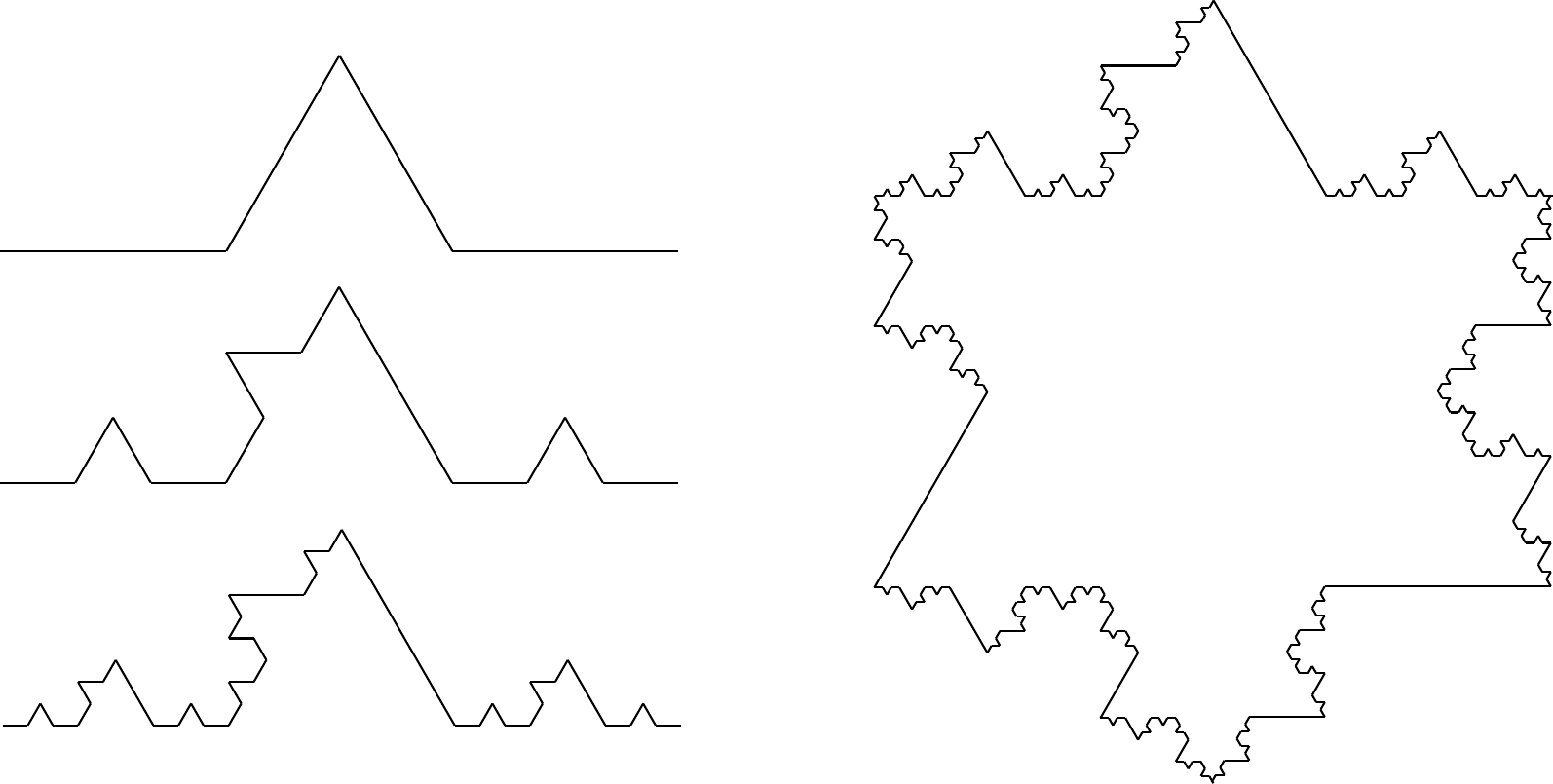}

\begin{picture}(400,0)(0,0)
\put(25,155){1}
\put(60,175){2}
\put(105,175){3}
\put(140,155){4}
\put(305,110){$\Omega_1$}
\put(225,110){$\Sigma$}
\end{picture}
\caption{On the left, we have the first three stages of the construction of $\Gamma$. The first curve is $\Gamma_{1}$, and we freeze edge number 3. Then replace edges 1,2, and 4 with copies of $\Gamma_{1}$ to make $\Gamma_{2}$, then replace each new edge there (except for the images of edge number 3) with copies of $\Gamma_{1}$ to get $\Gamma_{3}$, and so on. On the right, we take 3 copies of $\Gamma$ to obtain a quasicurve $\Sigma$ that splits the plane into two NTA domains. }
\label{f:koch}
\end{figure}

More explicitly, we can define $\Gamma$ as follows. Let $\Gamma_{0}$ be the unit line segment and let $f_{1},f_{2},f_{3},f_{4}$ be the similarities that map $\Gamma_{0}$ onto four line segments of 1/3 the size as in the usual von Koch construction. Now we let $K$ be the attractor of the similarities $f_{1},f_{2},$ and $f_{4}$ (that is, we omit $f_{3}$), so
\[
K=f_{1}(K)\cup f_{2}(K)\cup f_{4}(K). 
\]
We now let 
\[
\cL_{n} = \{f_{3}\circ f_{\alpha} (\Gamma_{0}): |\alpha|=n, \alpha_{i}\neq 3 \mbox{ for any }i\}
\]
and
\[
\Gamma = K\cup J,\;\; J:=\bigcup_{n\geq 0} \bigcup_{L\in \cL_{n}}L,
\]
where the union is over all multiindices $\alpha=\alpha_{n}\alpha_{n-1}\cdots \alpha_{1}$ (including the empty index) and $f_{\alpha} = f_{\alpha_{n}}\circ \cdots \circ f_{\alpha_{1}}$ (where $f_{\varnothing}$ is just the identity).  Now combine three rotated copies (by angles of $0$, $\frac{2\pi}{3}$, and $\frac{4\pi}{3}$ respectively) to obtain a closed curve $\Sigma$ that splits the plane into two open sets $\Omega_1$ and $\Omega_2$ as in Figure \ref{f:koch}. Using arguments similar to the original von Koch snowflake, is not hard to check that
\begin{enumerate}
\item $\Sigma$ is a quasicircle (see  \cite[Chapter VI, Equation 2.5]{GM}). 
\item $\Sigma$ consists of a purely unrectifiable set of finite and positive length (which is the three rotated copies of $K$) and a countable union $J$ of line segments with $\HH^{1}(J)=\infty$ (see \cite{Mattila} for a reference on rectifiable sets), and 
\item for every ball $B$ centered on $\Sigma$ with $r_{B}<\diam \Sigma$, there is another ball $B'\subset B$ centered also at $\Sigma$ with radius comparable to $r_{B}$ such that $\Sigma\cap B'$ is a 
 line segment $L_{B'}$, and one component of $B'\setminus L_{B'}$ is contained in $\Omega_1$ and the other in $\Omega_2$.
\end{enumerate}

Since $\Sigma$ is a closed quasicurve, it splits the complex plane into two NTA domains $\Omega_{1}$ and $\Omega_{2}$ (use \cite[Theorem VI.3.3]{GM} and the fact that quasiconformal mappings send NTA domains to NTA domains). 
{
Further, from the property (3), it is clear that $\Omega_1$ and $\Omega_2$ have joint big pieces
of chord-arc subdomains, and thus by Theorem \ref{teoNTA}, $\omega_2\in A_\infty(\omega_1)$, which
concludes the proof of Proposition \ref{p:example}.}
\vv

\begin{rem}
Notice that in our construction, we could have skipped a few more generations before freezing an edge, and so we could certainly construct domains whose boundaries have dimension strictly larger than one where interior and exterior harmonic measures are $A_{\infty}$-equivalent (and by choosing constructions other than variants of the von Koch curve, we can make the dimension as close to 2 as we wish). 
\end{rem}




\end{document}